\DeclareMathOperator{\curl}{curl}
\newtheorem{thm}{Theorem}[section]
\newtheorem{lem}[thm]{Lemma}
\newtheorem{assumption}[thm]{Assumption}
\newtheorem{proposition}[thm]{Proposition}
\theoremstyle{remark}
\newtheorem{rem}[thm]{Remark}
\newcommand{\Ab}{\mathbf{A}}
\newcommand{\Cb}{\mathbb{C}}
\newcommand{\Fb}{\mathbf{F}}
\newcommand{\R}{\mathbb{R}}
\numberwithin{equation}{section}
\title[G-L energy]{On the Ginzburg-Landau energy with a magnetic field vanishing along a curve}
\author[A. Kachmar]{Ayman Kachmar}
\address{Lebanese University, Department of Mathematics, Nabatieh, Lebanon}
\email{ayman.kashmar@gmail.com}
 \author[M. Nasrallah]{Marwa Nasrallah}
 \address{Lebanese International University, Beirut, Lebanon \newline \& Lebanese University, faculty of Sciences, Section IV, Bekaa, Lebanon}
\email{marwa.nasrallah@liu.edu.lb}
\begin{document}

\maketitle
\begin{abstract}
The energy of a type II superconductor placed in a strong non-uniform, smooth and  signed  magnetic field  is displayed via a universal reference function defined by means of  a simplified two dimensional Ginzburg-Landau functional. We study the asymptotic behavior of this functional in a specific asymptotic regime, thereby linking it to a one dimensional functional,  using methods  developed by Almog-Helffer and  Fournais-Helffer devoted to the analysis of surface superconductivity  in the presence of a uniform magnetic field. As a result, we obtain an asymptotic formula reminiscent of the one for the surface  superconductivity regime, where the zero set of the magnetic field plays the role of the superconductor's surface. 
\end{abstract}

\section{Introduction}

During the two past decades, the mathematics of superconductivity has
 been the subject of intense activity (see \cite{dGe} for the physical
 background). 
One common model used to describe the behavior of a 
superconductor is the Ginzburg-Landau functional 
involving a pair $(\psi,\bf A)$, 
where $\psi$ is a wave function (called the order parameter) and 
${\bf A}$ is a vector field (called the magnetic potential), 
both being defined  on an open set $\Omega\subset\R^2$.
 The functional is
\begin{equation}\label{GL-Energy}
\mathcal{E}(\psi,{\bf A})=\int_{\Omega}\left[ |(\nabla -i\kappa H{\bf A})\psi|^{2}-\kappa^{2}|\psi|^{2}+\frac{\kappa^{2}}{2}|\psi|^{4}                                  \right]dx+\kappa^{2}H^{2}\int_{\Omega}|{\curl} {\bf A}-B_0|^{2}dx.
\end{equation}
The quantity $|\psi|^2$ measures the density of superconducting electrons (so that $\psi=0$ defines the normal state); ${\curl }{\bf A}$ measures the induced magnetic field; the parameter $H$ measures the strength of the external magnetic field and the parameter $\kappa >0$ is a characteristic  of the superconducting material. The function $B_0$  is a given function and accounts for the profile of an external non-uniform magnetic field. We will assume that $B_0\in C^3(\overline{\Omega})$.

Of particular physical interest is the {\bf ground state energy}
\begin{equation}\label{egs}
{\mathrm E}_{\rm gs}(\kappa,H):= \inf \{  \mathcal{E}(\psi,{\bf A})~:~ (\psi,\Ab)\in H^1(\Omega;\Cb)\times H^1(\Omega;\R^2)                          \}.
\end{equation}
As the intensity of the magnetic field  varies (i.e. the parameter $H$), changes in ${\mathrm E}_{\rm gs}(\kappa,H)$ mark various distinct states of the superconductor. That has been fairly understood for type~II superconductors  in the case where the magnetic field is uniform (i.e. $B_0=1$) which has allowed to distinguish between three critical values for the intensity of the applied magnetic field, denoted by $H_{C_1}$, $H_{C_2}$ and $H_{C_3}$ whose role can be described as follows (see \cite{FH-b, SS-b, CR1,CR2,CR3, FK-am}):
\begin{itemize}
\item If $H<H_{C_1}$, then the whole superconductor is in the perfect superconducting state\,;
\item If $H_{C_1}<H<H_{C_2}$, the superconductor is in the mixed phase, where both the superconducting and normal states co-exist in the bulk of the sample; the most interesting aspect of the mixed phase is that the region with the normal state  appears in the form of a lattice of point defects, covering the whole bulk of the sample \cite{SS-cmp}\,;
\item If $H_{C_2}<H<H_{C_3}$, superconductivity disappears  in the bulk but survives on the surface of the superconductor\,;
\item If $H>H_{C_3}$, superconductivity is destroyed and the superconductor returns to the normal state\,.
\end{itemize} 

The case of a non-uniform sign changing magnetic field has been addressed first in \cite{P.K.} then  recently in \cite{Att1, Att2, Att3, HK, HK1}. In the presence of such magnetic fields, the behavior of the superconductor (and the  associated critical magnetic fields) differ significantly from the case of a uniform applied magnetic field. In particular, the  order of  the intensity of the third critical field $H_{C_3}$ increases, and in the mixed phase between $H_{C_2}$ and $H_{C_3}$, superconductivity is  neither present everywhere in the bulk, nor it is evenly distributed in the form of a lattice. We refer to \cite{HK, HK1} for more details.

Now we state our assumption on the function $B_0$. These are two conditions that will allow $B_0$ to represent  a non-uniform sign changing applied magnetic field. The first condition is on the zero set of $B_0$ and says
\begin{equation}\label{eq:Gamma*}
\Gamma:=\{x\in \overline{\Omega}\,,\,B_0(x)=0\}\not=\emptyset
\quad{\rm and}\quad \Gamma\cap\partial\Omega~{\rm is~ finite}\,.
\end{equation}
The second condition is on the gradient of the function $B_0$ and yields that the function $B_0$ vanishes non-degenerately and changes sign:
\begin{equation}\label{Eq:Gamma}
|B_0|+|\nabla B_0|\neq 0 \quad {\rm in} \qquad  \overline\Omega.
\end{equation}
Note that \eqref{Eq:Gamma} yields that $\Gamma$ consists of a finite number of smooth curves that are assumed to  intersect $\partial\Omega$ transversely. Such magnetic fields arise naturally in many contexts \cite{AHP, CL, Mon}.

Under the assumptions \eqref{eq:Gamma*} and \eqref{Eq:Gamma}, the ground state energy $\mathrm E_{\rm gs}(\kappa,H)$ is estimated for various regimes of $H$ and $\kappa$.  Firstly, in light of results in Pan-Kwek \cite{P.K.} and Attar \cite{Att3}, we know that there exists $\overline{M}>0$ such that, for $H>\overline{M}\kappa^2$ and $\kappa$ sufficiently large, ${\rm E_{\rm gs}}(\kappa,H)=0$ and  every critical point $(\psi,\Ab)$ of the functional in \eqref{GL-Energy} is a normal solution, i.e. $\psi=0$ everywhere. The meaning of this is that the critical field $H_{C_3}$, the threshold above which superconductivity is lost, is of the order of $\kappa^2$.
 
In the recent paper \cite{HK1}, the authors write an  asymptotic expansion for  the ground state energy  in the specific regime where $H$ is of order $\kappa^2$ and  $\kappa\to+\infty$ (in this case,  $H$ is of the order of the third critical field $H_{C_3}$). 

The result in \cite{HK1} reads as follows. There exists a universal function $E(\cdot)$, introduced in Theorem \ref{Thm:EL} below, such that if $0<M_1<M_2$, then, for $H\in[M_1\kappa^2, M_2\kappa^2]$, the ground state energy satisfies, as $\kappa\rightarrow\infty$,
\begin{equation}\label{eq:ae-HK}
{\mathrm E}_{\rm gs}(\kappa,H)=\kappa  \int_{\Gamma}\left(|\nabla B_0(x)|\frac{H}{\kappa^2}\right)^{1/3}E\left(|\nabla B_0(x)|\frac{H}{\kappa^2}\right)      ds(x)+\frac{\kappa^3}{H} o (1)\,,
\end{equation}
where $ds$ denotes the arc-length measure in $\Gamma$.

The asymptotic analysis of $\mathrm E_{\rm gs}(\kappa,H)$ has been carried for other regimes of the magnetic field strength, down to $H\approx \kappa^{1/3}$, in \cite{Att1, Att2, HK1}.  The case where the function $B_0$  is only  H\"older continuous or a step function has been discussed in \cite{HK, AK}. 

Let us mention a few properties of the function $E(\cdot)$ appearing in \eqref{eq:ae-HK}:
\begin{itemize}
\item $L\in (0,\infty)\mapsto E(L)\in(-\infty,0]$ is a continuous function\,;
\item As $L\to0_+$, the asymptotic behavior of $E(L)$ is analyzed in \cite{HK2}; in particular $|E(L)|\approx L^{-4/3}$\,;
\item There exists a universal (spectral) constant $\lambda_0>0$  (defined below in \eqref{eq:lambda0}) such that $E(L)=0$ for $L\geq\lambda_0^{-3/2}$ and $E(L)<0$ for $0<L<\lambda_0^{-3/2}$.
\end{itemize}

The aim of this paper is to analyze the asymptotic behavior of $E(L)$ as $L\to\lambda_0^{-3/2}$ from below (thereby complementing the result in \cite{HK2} devoted for the regime $L\to0_+$).  To that end, we introduce the following quantities :
\begin{itemize}
\item $\lambda_0>0$ and $\tau_0<0$ are the constants (see Theorem~\ref{thm:M-op})
\begin{equation}\label{eq:lambda0}
\lambda_0=\inf_{\alpha\in\R}\lambda(\alpha)=\lambda(\tau_0)
\end{equation}
where $\lambda(\alpha)$ is the lowest eigenvalue of the operator $-\frac{d^2}{dt^2}+\left(\frac{t^2}2+\alpha\right)^2$.
\item $u_{0}$ is the positive $L^2$-normalized eigenfunction satisfying
$$\left(-\frac{d^2}{dt^2}+\left(\frac{t^2}2+\tau_0\right)^2\right)u_{0}=\lambda_0u_{0}~{\rm in~}\R\,.$$
\end{itemize}

We obtain:
\begin{thm}\label{main-theorem} As $L\nearrow\lambda_0^{-3/2}$, the  following asymptotic formula holds,
\[
E(L)= -\frac{ L^{2/3}}{2} \dfrac{(L^{-2/3}-\lambda_0)^2 }{\|u_{0}\|_4^4}(1+o(1))\,.
\]
\end{thm}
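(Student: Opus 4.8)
The plan is to treat the statement as a bifurcation of the minimizer defining $E(L)$ away from the trivial (normal) state, occurring exactly at the threshold $L=\lambda_0^{-3/2}$, i.e. when the effective binding coefficient $L^{-2/3}$ of the reduced one-dimensional functional (Theorem~\ref{Thm:EL}) crosses the ground-state eigenvalue $\lambda_0$ of $H_\alpha:=-\frac{d^2}{dt^2}+\bigl(\frac{t^2}{2}+\alpha\bigr)^2$ at $\alpha=\tau_0$ (Theorem~\ref{thm:M-op}). Set $\mu:=L^{-2/3}-\lambda_0$, so that $\mu\searrow 0^+$ as $L\nearrow\lambda_0^{-3/2}$. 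I expect the minimizer to have amplitude of order $\mu^{1/2}$, so that the quadratic gain $-\mu s^2$ and the quartic penalty balance at order $\mu^2$; concretely, after projecting onto the ground mode the energy should reduce to the scalar function $g(s)=-\mu\,s^2+\frac{\|u_0\|_4^4}{2L^{2/3}}\,s^4$, whose minimum $-\frac{L^{2/3}\mu^2}{2\|u_0\|_4^4}$, attained at $s^2=L^{2/3}\mu/\|u_0\|_4^4$, is exactly the asserted leading term.

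For the upper bound I would insert the trial state $u=s\,u_0$ (taking $\alpha=\tau_0$) into the functional defining $E(L)$. Since $u_0$ is $L^2$-normalized with $\langle u_0,H_{\tau_0}u_0\rangle=\lambda_0$, the quadratic part collapses to $(\lambda_0-L^{-2/3})s^2=-\mu s^2$ and the quartic part to $\frac{\|u_0\|_4^4}{2L^{2/3}}s^4$, so $E(L)\le\min_s g(s)=-\frac{L^{2/3}}{2}\frac{(L^{-2/3}-\lambda_0)^2}{\|u_0\|_4^4}$, with no error term at this stage.

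The substantive part is the matching lower bound. Starting from a minimizer $(u_L,\alpha_L)$, I would first extract a priori smallness: coercivity of the quadratic form up to the defect $\mu$ together with nonnegativity of the quartic term forces $E(L)<0$ to come with $\|u_L\|_2^2=O(\mu)$ and $\|u_L\|_4^4=O(\mu^2)$. Decomposing $u_L=s_L u_{\alpha_L}+w_L$ with $w_L\perp u_{\alpha_L}$ (where $u_\alpha$ is the normalized ground state of $H_\alpha$), the spectral gap $\lambda_2(\alpha)-\lambda(\alpha)\ge c_0>0$—valid since $H_\alpha$ has compact resolvent and simple, isolated lowest eigenvalue—gives $\langle w_L,(H_{\alpha_L}-L^{-2/3})w_L\rangle\ge c_0\|w_L\|_2^2$ for $L$ near threshold, so that $w_L$ is energetically penalized and of higher order, $\|w_L\|=o(s_L)$. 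At the same time the non-degeneracy $\lambda''(\tau_0)>0$ confines $\alpha_L$ to $\tau_0+o(1)$ and makes its contribution enter only at order $(\alpha_L-\tau_0)^2 s_L^2$.

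With these estimates in hand I would expand the quartic energy as $\|u_L\|_4^4=s_L^4\|u_0\|_4^4(1+o(1))$, using $L^4$-continuity of $\alpha\mapsto u_\alpha$ near $\tau_0$ and the smallness of $w_L$ to discard all cross terms, and bound the full energy below by $-\mu s_L^2+\frac{\|u_0\|_4^4}{2L^{2/3}}s_L^4(1+o(1))=g(s_L)(1+o(1))\ge -\frac{L^{2/3}}{2}\frac{\mu^2}{\|u_0\|_4^4}(1+o(1))$, matching the upper bound. The hard part will be precisely this control of the cross terms in $\|s_Lu_{\alpha_L}+w_L\|_4^4$: one must show that $\|w_L\|$ and $|\alpha_L-\tau_0|$ are small enough that their contributions are $o(\mu^2)$ and do not corrupt the constant $\|u_0\|_4^{-4}$. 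This is where the simplicity and isolation of $\lambda_0$, the nondegeneracy of $\lambda(\cdot)$ at $\tau_0$, and the fast (Gaussian-type) decay of the ground states—needed to legitimize the $L^4$ manipulations on all of $\R$—are used together.
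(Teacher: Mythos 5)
Your proposal contains a genuine gap, and it is the central one: you treat $E(L)$ as the ground state energy of the one-dimensional family $\mathcal{E}_{\alpha,b}$, minimized jointly over $(u,\alpha)$, but by Theorem~\ref{Thm:EL} the quantity $E(L)$ is defined as the thermodynamic limit $\lim_{R\to\infty}\mathfrak{e}(L^{-2/3};R)/2R$ of \emph{two-dimensional} energies on the strips $S_R=(-R,R)\times\R$, where the competitors $u(x_1,x_2)$ have no prescribed structure in $x_1$. Your upper bound is essentially sound: inserting $e^{i\tau_0 x_1}\,s\,u_0(x_2)$ (with a cutoff in $x_1$ to respect the Dirichlet condition at $x_1=\pm R$, producing an $O(1/R)$ error that vanishes in the limit) reduces the 2D energy per unit length to the scalar function $g(s)$ you wrote, and the value $-\frac{L^{2/3}}{2}(L^{-2/3}-\lambda_0)^2/\|u_0\|_4^4$ is correct; this is the paper's Step~2 in Section~\ref{Proof-MT}, and taking $\alpha=\tau_0$ rather than the optimal $\xi(b)$ costs nothing at leading order. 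But your ``matching lower bound'' analyzes a minimizer $(u_L,\alpha_L)$ of the 1D problem, i.e.\ it only rules out better energies among configurations of the separated form $e^{i\alpha x_1}f(x_2)$. Even carried out perfectly, that argument proves the asymptotics of $\inf_\alpha\mathfrak{b}(\alpha,L^{-2/3})$ (the paper's Theorem~\ref{prop-min}, which the paper establishes by a different technique: the regularized resolvent $R_{\alpha,b}$, the fixed-point series representation of $f_{\alpha,b}$, and the implicit function theorem for $\xi(b)$), together with the one-sided inequality $E(L)\le\inf_\alpha\mathfrak{b}(\alpha,L^{-2/3})$. It does not prove $E(L)\ge -\frac{L^{2/3}}{2}(L^{-2/3}-\lambda_0)^2\|u_0\|_4^{-4}(1+o(1))$, because nothing in your argument excludes genuinely two-dimensional configurations, with nontrivial $x_1$-dependence beyond a pure phase, achieving strictly lower energy per unit length.

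Closing this gap is exactly what Sections~\ref{Spec-est} and \ref{models-HC} of the paper do, following Almog--Helffer: first, Dirichlet competitors on $S_R$ extend periodically, so $\mathfrak{e}\ge\mathfrak{e}^{\rm per}$; second, any periodic competitor is written as $\psi=e^{i\xi(b)x_1}f_{\xi(b),b}(x_2)v(x_1,x_2)$ and expanded in Fourier modes in $x_1$, which after the ground-state substitution $w_n=f_{\xi(b),b}v_n$ reduces the energy difference $\mathcal{E}_{R,b}(\psi)-\mathcal{E}_{R,b}(\psi_b)$ to a sum of terms controlled by the eigenvalues $\gamma(n\pi/R,b)$ of the operator linearized around the 1D minimizer, plus a nonnegative quartic remainder; third, the spectral estimate $\inf_\beta\gamma(\beta,b)=0$ (Theorem~\ref{inf-lambda}, proved via the Feynman--Hellmann identity \eqref{Fey-Hel}, which gives $\gamma_\beta(0,b)=0$, and the nondegeneracy $\lambda''(\tau_0)>0$) shows every such term is nonnegative, so the periodic minimizer is exactly the separated profile $\psi_b$. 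Note that this is also why the paper must prove uniqueness and smoothness of $\xi(b)$ and work with $f_{\xi(b),b}$ rather than with an arbitrary near-minimizing $\alpha$: the spectral estimate is anchored at the optimal phase. Your bifurcation analysis, spectral gap, and cross-term control are reasonable tools for the 1D statement, but without this 2D-to-1D reduction (or a substitute for it) the theorem is not proved.
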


Now we return back to \eqref{eq:ae-HK} and observe that, when $H$ satisfies
$$\left(\min_{x\in\overline{\Omega}}|\nabla B_0(x)|\right)\frac{H}{\kappa^2}\geq \lambda_0^{-3/2}\,,$$
the leading order term in \eqref{eq:ae-HK} vanishes (so superconductivity disappears in the bulk of the sample). This leads us to introduce the following critical field
\begin{equation}\label{eq:HC2}
H_{C_2}(\kappa)=\gamma\kappa^2
\end{equation}
where 
\begin{equation}\label{eq:gamma}
\gamma:=\lambda_0^{-3/2} c_0^{-1}\quad{\rm and}\quad  c_0=\min_{x\in\Gamma}|\nabla B_0(x)|\,.
\end{equation}
Then one may ask whether we can refine the formula in \eqref{eq:ae-HK}  under the assumption that  
$H$ is close to and below $H_{C_2}(\kappa)$ (see \eqref{condition-on-H} below).
Indeed this is possible by using  Theorem~\ref{main-theorem} and by working under a rather generic assumption on $B_0$:
\begin{assumption}\label{ass:B0-2}
Suppose that $B_0$ satisfies \eqref{eq:Gamma*} and \eqref{Eq:Gamma}.  Let  $c_0$ be the constant introduced in \eqref{eq:gamma} and
\begin{equation}\label{eq:Gamm0}
\Gamma_0=\{x\in\Gamma~:~|\nabla B_0(x)|=c_0\}
\end{equation}
be the set of minimum  points of the function $\Gamma\ni x\mapsto |\nabla B_0(x)|$.

 We assume that {\bf one}  of the following two conditions hold:
\begin{itemize}
\item Either $\Gamma_0=\Gamma$,\,
\item or the set $\Gamma_0$ is finite, $\Gamma_0\subset\Omega$ and  every point of $\Gamma_0$ is a non-degenerate minimum of the function $\Gamma\ni x\mapsto |\nabla B_0(x)|$.
\end{itemize}
\end{assumption}
\begin{rem} In the case of the unit disc $\Omega=B(0,1)$, the following two functions 
$$(x,y)\mapsto y-x\quad{\rm and}\quad (x,y)\mapsto y-x^2$$
serve 
as two examples of a magnetic field $B_0$ satisfying  Assumption~\ref{ass:B0-2}.
\end{rem}

\begin{rem}
If the set $\Gamma_0$ is finite and there exists $x_0\in\Gamma_0\cap\partial\Omega$, then $x_0$ is a non-degenerate minimum  if the derivative of the map $x\mapsto\nabla B_0(x)$ at $x_0$ is not zero. 
\end{rem}

Assumption~\ref{ass:B0-2} is reminiscent  of the assumption by Fournais-Helffer in \cite[Assumption~5.1]{FH-cvpde} but with the function $x\mapsto \big(-|\nabla B_0(x)|\big)$ here replacing  the curvature there. Also, Assumption~\ref{ass:B0-2} appears in the analysis of magnetic mini-wells 
by Helffer-Kordyukov-Raymond-V\~{u}\,Ng\c{o}c \cite{HKRV}.

Next we assume that $H$ approaches the critical field in \eqref{eq:HC2} as follows
\begin{equation}\label{condition-on-H}
H=\Big(\gamma-\rho(\kappa)\Big)\kappa^2\,,
\end{equation}
where  the constant $\gamma$ is introduced in \eqref{eq:gamma} and
\begin{equation}\label{eq:rho}
\rho:(0,\infty)\to(0,\infty) {\rm ~satisfies~} \kappa^{-1/30}\ll  \rho(\kappa)\ll 1\,.
\end{equation}
Here and in the sequel, we use the following notation. If $a(\kappa)$ and $b(\kappa)$ are two positive valued functions, the notation $a(\kappa)\ll b(\kappa)$ means that $a(\kappa)/b(\kappa)\to0$ as $\kappa\to\infty$. Also, by writing $a(\kappa)\approx b(\kappa)$ it is meant that there exist constants $\kappa_0,c_1,c_2>0$ such that $c_1 b(\kappa)\leq a(\kappa)\leq c_2 b(\kappa)$, for all $\kappa\geq\kappa_0$.

Clearly, when \eqref{condition-on-H}, \eqref{eq:rho}  and Assumption~\ref{ass:B0-2} hold, the principal term in \eqref{eq:ae-HK} satisfies
\begin{align}\label{eq:ae-HK*}
&\int_{\Gamma}\left(|\nabla B_0(x)|\frac{H}{\kappa^2}\right)^{1/3}E\left(|\nabla B_0(x)|\frac{H}{\kappa^2}\right)      ds(x)\nonumber\\
&=-\frac{\gamma}{2\|u_{0}\|_4^4}\left( \int_{\Gamma}
 |\nabla B_0(x)|\left(\Big(\frac{H}{\kappa^2}|\nabla B_0(x)|\Big)^{-2/3}-\lambda_0\right)_+^2\,ds(x)\right)(1+o(1))\nonumber\\
&=-\frac{\lambda_0^{-3/2}}{2\|u_{0}\|_4^4}\left( \int_{\Gamma}
 \left(\Big(\frac{H}{\kappa^2}|\nabla B_0(x)|\Big)^{-2/3}-\lambda_0\right)_+^2\,ds(x)\right)(1+o(1))\,.
\end{align}
The last step follows since $\gamma=\lambda_0^{-3/2}c_0^{-1}$ and  the function on $\Gamma$, $\left(\Big(\frac{H}{\kappa^2}|\nabla B_0(x)|\Big)^{-2/3}-\lambda_0\right)_+$, is supported in $\overline{\Gamma_\kappa}$, where
\begin{equation}\label{eq:Gam0}
\Gamma_\kappa=\{x\in\Gamma~:~\frac{H}{\kappa^2}|\nabla B_0(x)|<\lambda_0^{-3/2}\}\,,
\end{equation}
which yields that $|\nabla B_0(x)|\sim c_0$ on $\Gamma_\kappa$.

Under Assumption~\ref{ass:B0-2}, only one of the following two cases may occur:
\begin{itemize}
\item Either $\Gamma_\kappa=\Gamma$, in which case
$$\int_{\Gamma}
\left( \Big(\frac{H}{\kappa^2}|\nabla B_0(x)|\Big)^{-2/3}-\lambda_0\right)_+^2\,ds(x)=\left(\frac23c_0\lambda_0^{3/2}\rho(\kappa)\right)^2|\Gamma|\big(1+o(1)\big)\,;$$
\item or  $|\Gamma_\kappa|\approx \sqrt{\rho(\kappa)}$ as $\kappa\to+\infty$, in which case 
$$\int_{\Gamma}
 \left(\Big(\frac{H}{\kappa^2}|\nabla B_0(x)|\Big)^{-2/3}-\lambda_0\right)_+^2\,ds(x)\geq c\rho(\kappa)^2\sqrt{\rho(\kappa)}\approx \big(\rho(\kappa)\big)^{5/2}\,,$$
 for some constant $c>0$, which depends on the second derivative of the function $|\nabla B_0(x)|$ at the minimum points.
 \end{itemize}

 As an application of the main result of this paper (Theorem~\ref{thm:M-op}), we are able to prove that


\begin{thm}\label{corol:KN1}
Under Assumption~\ref{ass:B0-2}, if \eqref{condition-on-H} and \eqref{eq:rho} hold, then as $\kappa\to+\infty$,
$$
{\mathrm E}_{\rm gs}(\kappa,H)=\left[ -\frac{\kappa\lambda_0^{-3/2}}{2\|u_{0}\|_4^4} \int_{\Gamma}
 \left(\Big(\frac{H}{\kappa^2}|\nabla B_0(x)|\Big)^{-2/3}-\lambda_0\right)_+^2\,ds(x)\right]\big(1 +o(1)\big)\,.
$$
\end{thm}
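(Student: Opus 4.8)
The plan is to combine the two inputs that are already in place: the two-term expansion \eqref{eq:ae-HK} of ${\mathrm E}_{\rm gs}(\kappa,H)$ borrowed from \cite{HK1}, and the boundary expansion of $E(\cdot)$ near $\lambda_0^{-3/2}$ furnished by Theorem~\ref{main-theorem}. Writing $L(x)=|\nabla B_0(x)|\,H/\kappa^2$, the leading term of \eqref{eq:ae-HK} is $\kappa\int_\Gamma L(x)^{1/3}E(L(x))\,ds(x)$. I would first note that, since $E$ vanishes on $[\lambda_0^{-3/2},\infty)$, this integral is carried by the set $\Gamma_\kappa$ of \eqref{eq:Gam0}; there, because $H/\kappa^2=\gamma-\rho(\kappa)\to\gamma=\lambda_0^{-3/2}c_0^{-1}$ and $|\nabla B_0|\to c_0$, every contributing value obeys $\lambda_0^{-3/2}-O(\rho(\kappa))\le L(x)<\lambda_0^{-3/2}$. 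As $\rho(\kappa)\to0$ this range collapses onto $\lambda_0^{-3/2}$ from below, so the relative error in Theorem~\ref{main-theorem} is \emph{uniform} in $x\in\Gamma_\kappa$; this uniformity is exactly what legitimates the chain of identities \eqref{eq:ae-HK*}, whose first step substitutes the expansion of Theorem~\ref{main-theorem} and whose later steps replace $H/\kappa^2$ by $\gamma$ and $\gamma|\nabla B_0|$ by $\lambda_0^{-3/2}$ up to relative $(1+o(1))$ factors.

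Assembling \eqref{eq:ae-HK} and \eqref{eq:ae-HK*} would then give
\[
{\mathrm E}_{\rm gs}(\kappa,H)=-\frac{\kappa\lambda_0^{-3/2}}{2\|u_{0}\|_4^4}\left(\int_\Gamma\left(\Big(\frac{H}{\kappa^2}|\nabla B_0(x)|\Big)^{-2/3}-\lambda_0\right)_+^2 ds(x)\right)\big(1+o(1)\big)+\frac{\kappa^3}{H}\,o(1),
\]
where the first $o(1)$ is a \emph{relative} error on the principal term while the second is an \emph{additive} remainder. Since $H\approx\kappa^2$, the additive remainder has size $\kappa\cdot o(1)$, and the entire difficulty of the proof is to show that it can be absorbed into the relative error, i.e. that it is $o$ of the principal term. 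This is the step where Assumption~\ref{ass:B0-2} and the lower constraint in \eqref{eq:rho} must enter.

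The crux is therefore a quantitative lower bound for the principal integral, combined with the explicit decay rate hidden in the $o(1)$ of \eqref{eq:ae-HK}. Under Assumption~\ref{ass:B0-2} the two regimes recorded just before the statement give, as $\kappa\to\infty$,
\[
\int_\Gamma\left(\Big(\frac{H}{\kappa^2}|\nabla B_0(x)|\Big)^{-2/3}-\lambda_0\right)_+^2 ds(x)\ \gtrsim\ \rho(\kappa)^{5/2},
\]
the worst case being a finite, non-degenerate minimum set, for which $|\Gamma_\kappa|\approx\sqrt{\rho(\kappa)}$, whereas $\Gamma_0=\Gamma$ yields the larger bound $\rho(\kappa)^2|\Gamma|$. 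Hence the principal term has size at least $\kappa\,\rho(\kappa)^{5/2}$. I would then use that the remainder in \eqref{eq:ae-HK} carries a rate of the form $O(\kappa^{-1/12})$ — and this is precisely the point at which one must return to the derivation of \eqref{eq:ae-HK}, equivalently to the model-operator analysis behind Theorem~\ref{thm:M-op}, rather than treat \eqref{eq:ae-HK} as a black box — so that $\kappa\cdot O(\kappa^{-1/12})=o\big(\kappa\,\rho(\kappa)^{5/2}\big)$ exactly because $\rho(\kappa)^{5/2}\gg\kappa^{-1/12}$, which is the content of $\kappa^{-1/30}\ll\rho(\kappa)$ in \eqref{eq:rho}. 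This absorbs the additive remainder into the $(1+o(1))$ and yields the claimed formula. The main obstacle will be this last calibration: one needs the \emph{explicit} error rate of \eqref{eq:ae-HK}, not merely its $o(1)$ form, matched against the sharp lower bound $\rho(\kappa)^{5/2}$ coming from the non-degeneracy in Assumption~\ref{ass:B0-2}; the hypothesis \eqref{eq:rho} is evidently tailored so that these two exponents meet.
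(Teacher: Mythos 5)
Your strategic outline coincides with the paper's proof, and your calibration is exactly the one the paper uses: the principal term is bounded below by $c\,\kappa\,\rho(\kappa)^{5/2}$ (worst case: finite non-degenerate $\Gamma_0$, $|\Gamma_\kappa|\approx\sqrt{\rho(\kappa)}$), and since $\big(\kappa^{-1/30}\big)^{5/2}=\kappa^{-1/12}$, the hypothesis \eqref{eq:rho} is precisely what makes an additive remainder of size $\kappa^{11/12}$ negligible. You even guessed the correct exponent. However, there is a genuine gap: the entire content of the paper's proof is the \emph{derivation} of that quantitative remainder, and your proposal only asserts it. The statement ``the remainder in \eqref{eq:ae-HK} carries a rate of the form $O(\kappa^{-1/12})$'' is exactly what must be proved, and it does not follow from the model-operator analysis behind Theorem~\ref{thm:M-op}, as you suggest; it requires redoing the Ginzburg--Landau localization arguments of \cite{HK1} with explicit error tracking. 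Concretely, the paper proves: (i) two geometric lemmas (Lemmas~\ref{lem:ball} and \ref{lem:construction-balls}) producing a covering of $\Gamma$ by $N\approx|\Gamma|/(2\ell)$ pairwise disjoint disks of radius $\ell=\kappa^{-7/8}$ centered on $\Gamma$; (ii) an upper bound (Proposition~\ref{prop:ub-HK}) by gluing local test functions $w_{a_j,x_j}$ supported in these disks, with total error $C\kappa^{15/16}$; (iii) a lower bound (Proposition~\ref{prop:lb-HK}) via a partition of unity adapted to a tubular neighborhood of $\Gamma$ together with the exponential smallness of minimizers away from $\Gamma$, with error $C\kappa^{11/12}$. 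Only after \eqref{eq:corol:KN1*} is available with the explicit $\mathcal O(\kappa^{11/12})$ does your final absorption argument close the proof.

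A second, smaller point: you treat \eqref{eq:ae-HK*} as needing a uniformity argument for Theorem~\ref{main-theorem} over $\Gamma_\kappa$. That is a legitimate concern and worth making explicit, since Theorem~\ref{main-theorem} is stated for a single $L\nearrow\lambda_0^{-3/2}$ while the integrand involves $L(x)$ ranging over an interval collapsing onto $\lambda_0^{-3/2}$; monotonicity and continuity of $E$ (Theorem~\ref{Thm:EL}) combined with the squeezing $\lambda_0^{-3/2}-O(\rho(\kappa))\leq L(x)<\lambda_0^{-3/2}$ on $\Gamma_\kappa$ do give the required uniform relative error, but this deserves a sentence of proof rather than parenthetical assertion. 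In summary: right plan, right exponents, but the quantitative core (Propositions~\ref{prop:ub-HK} and \ref{prop:lb-HK}) is missing, and it constitutes essentially all of the work.
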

The result in Theorem~\ref{corol:KN1} is far from optimal. We mention it as a simple application of Theorem~\ref{main-theorem} and the analysis in \cite{HK1}. To get the optimal regime (for $\rho(\kappa)$) where the result in Theorem~\ref{corol:KN1} holds, we need a rather detailed analysis of the ground state energy and the corresponding minimizers, that we postpone to a separate work. 

The rest of the paper is organized as follows. We introduce in Section \ref{S-GL} a certain simplified Ginzburg-Landau functional from which arises the definition of the limiting function $E(L)$ appearing in Theorem~\ref{main-theorem} above. We recall in Section \ref{Montgomery} spectral facts concerning the family of Montgomery operators. A related family of 1D linear functionals is introduced in Section \ref{non-linear-funct} where we investigate the infimum over all the ground state energies of those functionals. Moreover, we prove in Section \ref{non-linear-funct} a key-ingredient asymptotic formula needed for the proof of the main result. A technical spectral estimate is proved in  Section \ref{Spec-est}. We perform in Section \ref{models-HC} some Fourier analysis to get a good estimate on the energy functional defined on half-cylinders. We conclude with the proof of Theorem \ref{main-theorem} in Section~\ref{Proof-MT}. Finally, in Section~\ref{sec:proof-KN1}, we prove Theorem~\ref{corol:KN1}.

\section{The simplified Ginzburg-Landau functional}\label{S-GL}
We consider the following  magnetic potential, 
\begin{equation}\label{Aapp}
{\bf A}_{\rm app}(x)=\left(-\dfrac{x_{2}^{2}}{2},0\right)\quad\big(x=(x_1,x_2)\in\R^2\big)
\end{equation}
which generates the magnetic field $\curl\Ab_{\rm app}=x_2$ that vanishes along the line $x_2=0$.

Let $L>0,b>0,R>0$ and $S_{R}=(-R,R)\times \R$. Consider the functional 
\begin{equation}{\label{E:Rb}}
\mathcal{E}_{R,b}(u)=\int_{S_{R}}\left( |(\nabla -i{\bf A}_{\rm app})u|^{2}-b|u|^{2}+\dfrac{b}{2}|u|^{4}\right) dx,
\end{equation}
and the corresponding ground state energy 
\begin{equation}\label{ebr}
\mathfrak{e}(b;R)= {\rm inf}\left\{ \mathcal{E}_{R,b}(u)  ~:~   (\nabla -i{\bf A}_{\rm app})u\in L^{2}(S_{R}), ~ u\in L^{2}(S_{R})~ {\rm and}~ u=0 ~{\rm on}~\partial S_{R}  \right\}\,.
\end{equation}
The following theorem was proven in  \cite[Theorem 3.8]{HK1}. 
\begin{thm}\label{Thm:EL}
Given $L>0$, there exists $E(L)\leq 0$ such that,
\begin{equation}\label{eq:E(L)}
\lim_{R\rightarrow\infty}\dfrac{\mathfrak{e}(L^{-2/3};R)}{2R}=E(L).
\end{equation}
The function $(0,\infty)\ni L\mapsto E(L)\in (-\infty,0] $ is continuous, monotone increasing, and 
\[
E(L)=0\quad \mbox{ if and only if } \quad L\geq \lambda_{0}^{-3/2}\,,
\]
where $\lambda_0>0$ is the eigenvalue introduced in \eqref{eq:lambda0}.
 
Furthermore, there exists a constant $C>0$ such that
\begin{equation}
\forall R\geq 2, ~\forall L>0, \qquad  E(L)\leq \dfrac{e(L^{-2/3};R)}{2R}\leq E(L)+ C(1+L^{-2/3})R^{-2/3}.
\end{equation}
\end{thm}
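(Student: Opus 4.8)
The plan is to establish the four assertions—existence of the limit, the sign characterization, monotonicity and continuity, and the quantitative rate—by combining a fibered spectral analysis with a Fekete-type subadditivity argument and Agmon confinement estimates.

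\textbf{Spectral threshold and the sign of $E$.} First I would record that, after a partial Fourier transform in the $x_1$ variable, the magnetic Laplacian $(\nb-i\Ab_{\rm app})^2$ on $\R^2$ decomposes into the fibers $-\frac{d^2}{dt^2}+\big(\tfrac{t^2}{2}+\xi\big)^2$, with $t=x_2$ the fiber variable and $\xi$ dual to $x_1$; these are exactly the Montgomery operators of \eqref{eq:lambda0}, so $\inf\Spec(\nb-i\Ab_{\rm app})^2=\inf_\xi\lambda(\xi)=\lambda_0$. Extending any admissible $u$ on $S_R$ by zero to $\R^2$ preserves the magnetic Dirichlet energy, whence, with $b=L^{-2/3}$,
\[
\mathcal E_{R,b}(u)\ge(\lambda_0-b)\norm{u}_2^2+\tfrac b2\norm{u}_4^4 .
\]
For $b\le\lambda_0$ (i.e. $L\ge\lambda_0^{-3/2}$) the right-hand side is $\ge0$, so $\mathfrak e(b;R)=0$ and $E(L)=0$. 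For $b>\lambda_0$, testing with a small multiple $\varepsilon\phi$ of a cut-off Montgomery ground state at fiber $\tau_0$ gives $\mathcal E_{R,b}(\varepsilon\phi)=\varepsilon^2(\lambda_0-b+o(1))\norm{\phi}_2^2+O(\varepsilon^4)<0$ for $R$ large and $\varepsilon$ small, so $\mathfrak e(b;R)<0$ and $E(L)<0$. This yields $E(L)\le0$ throughout together with the equivalence $E(L)=0\Leftrightarrow L\ge\lambda_0^{-3/2}$.

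\textbf{A priori confinement.} The Euler--Lagrange equation and the maximum principle give $\norm{u}_\infty\le1$. Since the fiber potential grows like $t^4/4$, Agmon estimates furnish decay $e^{-c|x_2|^3}$, so the mass and energy carried per unit length in $x_1$ are finite and uniformly bounded; this produces minimizers by compactness and yields $-C_b R\le\mathfrak e(b;R)\le0$, guaranteeing the limit below is finite. Next, because $\Ab_{\rm app}$ is independent of $x_1$, translations in $x_1$ leave $\mathcal E_{R,b}$ invariant; gluing two (near-)minimizers along a seam where both vanish (Dirichlet) gives $\mathfrak e(b;R_1+R_2)\le\mathfrak e(b;R_1)+\mathfrak e(b;R_2)$. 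By the continuous Fekete lemma, $\mathfrak e(b;R)/(2R)\to\inf_R\mathfrak e(b;R)/(2R)=:E(L)$, which is finite and already gives the left inequality $E(L)\le\mathfrak e(L^{-2/3};R)/(2R)$. Writing $\mathcal E_{R,b}(u)$ as an affine function of $b$ for fixed $u$ shows $\mathfrak e(\cdot;R)$ is concave in $b$ with slope $\int(-|u|^2+\tfrac12|u|^4)\,dx\le0$ (using $|u|\le1$); hence $\mathfrak e$ is nonincreasing in $b$, so $E$ is nondecreasing in $L$, and the slope bound (of order $R$, locally uniformly in $b$, by confinement) makes $b\mapsto\mathfrak e(b;R)/(2R)$ locally Lipschitz uniformly in $R$, so $E$ is continuous.

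\textbf{The quantitative rate is the main obstacle.} The right inequality, with its explicit rate $C(1+L^{-2/3})R^{-2/3}$, is the crux, since Fekete gives convergence but no rate. I would construct a quasi-optimal test state on $S_R$ from a near-minimizer on a sub-strip, periodized in $x_1$ and cut off near $x_1=\pm R$ through a transition layer of optimized width; the lateral contributions are controlled by the Agmon decay, and balancing the gradient cost of the cutoff against the bulk energy lost in the layer is what produces the exponent $2/3$, while the prefactor $1+L^{-2/3}=1+b$ tracks the $b$-dependence of the kinetic and potential terms. Making this balance rigorous, and in particular extracting a constant $C$ that is uniform in $L$, is where I expect the real work to lie.
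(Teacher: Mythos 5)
Note first that the paper does not prove Theorem~\ref{Thm:EL} at all: the authors quote it directly from \cite[Theorem~3.8]{HK1}, so the only internal ``proof'' is that citation, and your attempt has to be measured against what the statement itself demands. On the soft parts your outline is correct and follows the standard route (essentially that of \cite{HK1}): the partial Fourier transform in $x_1$ identifies $\inf\Spec\,(\nabla-i\Ab_{\rm app})^2$ with $\inf_\xi\lambda(\xi)=\lambda_0$, which gives $\mathfrak e(b;R)=0$ for $b\le\lambda_0$ and, with the cut-off test state $\varepsilon e^{i\tau_0x_1}u_0(x_2)\chi(x_1)$, $\mathfrak e(b;R)<0$ for $b>\lambda_0$; translation invariance of $\Ab_{\rm app}$ in $x_1$ plus Dirichlet gluing gives subadditivity $\mathfrak e(b;R_1+R_2)\le\mathfrak e(b;R_1)+\mathfrak e(b;R_2)$, hence by Fekete the limit \eqref{eq:E(L)} exists, equals $\inf_R\mathfrak e(b;R)/(2R)$, and the left inequality $E(L)\le\mathfrak e(L^{-2/3};R)/(2R)$ comes for free; concavity in $b$ of an infimum of affine functions, together with $\|u\|_\infty\le1$, gives monotonicity, and concavity (or your Lipschitz bound) gives continuity. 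These steps do rest on unproved but standard ingredients (existence of minimizers, the $L^\infty$ bound, Agmon confinement in $x_2$ with constants locally uniform in $b$ and uniform in $R$, needed already for the lower bound $\mathfrak e(b;R)\ge -C_bR$), which is acceptable in an outline.

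The genuine gap is the quantitative bound $\mathfrak e(L^{-2/3};R)/(2R)\le E(L)+C(1+L^{-2/3})R^{-2/3}$, which is an integral part of the theorem and which you explicitly leave unproved (``where I expect the real work to lie''). Moreover, the construction you sketch for it points the wrong way: periodizing a near-minimizer of a narrower sub-strip and cutting it off inside $S_R$ only bounds $\mathfrak e(b;R)/(2R)$ by $\mathfrak e(b;R_0)/(2R_0)$ plus errors, i.e.\ by quantities that are \emph{larger} than $E(L)$; this merely reproduces the subadditive direction you already have. Since $E(L)=\inf_{R'}\mathfrak e(b;R')/(2R')$, the desired inequality is equivalent to an approximate \emph{super}additivity statement: one must start from a near-minimizer on $S_{R'}$ with $R'\gg R$ (whose energy per unit length is close to $E(L)$), select seam positions by an averaging or pigeonhole argument, and cut it by an IMS partition of unity into blocks of width $2R$, showing that the localization cost per block is $O\big((1+b)R^{1/3}\big)$. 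That requires controlling the mass of the big-strip minimizer near the chosen seams, with constants tracked explicitly and uniformly in $b$ over all of $(0,\infty)$ --- the statement allows both $L\to0$, i.e.\ $b\to\infty$, and $L\to\infty$ --- so that the error comes out exactly as the factor $(1+b)=(1+L^{-2/3})$; none of this appears in your proposal. As written, you have established only the qualitative assertions of the theorem.
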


\section{The Montgomery operator}\label{Montgomery}

For $\alpha\in\R$, consider the self-adjoint operator in $L^{2}(\R)$,  
\begin{equation}\label{eq:P-alpha}
P(\alpha) =-\dfrac{d^{2}}{dt^{2}}+\left(\dfrac{t^{2}}{2} +\alpha   \right)^{2}
\end{equation}
with domain 
\begin{equation}\label{B2R}
{\rm Dom}\big(P(\alpha)\big)=B^2(\R)=\Big\{ f\in H^{2}(\R)~:~ t^{4}f \in L^{2}(\R)\Big\}.
\end{equation} 
The first eigenvalue $\lambda(\alpha)$ of the operator $P(\alpha)$ is expressed by the min-max principle as follows
\begin{equation}\label{Eq:def-lambda-alpha}
\lambda(\alpha):=\inf_{u\in B^1({\R})} \dfrac{Q_\alpha(u)}{\|u\|^2_2},
\end{equation}    
where
\begin{equation}\label{Qalpha}
Q_{\alpha}(u)=\int_{\R} \left(   |u^{\prime}(t)|^{2}+ \left(\dfrac{t^{2}}{2}+\alpha \right)^{2}|u(t)|^{2}\right)dt
\end{equation}
is the quadratic form defined for $u$ in the space
\begin{equation}\label{B2R*}
B^1(\R)=\{u\in H^1(\R)~:~t^2u\in L^2(\R)\}\,.
\end{equation}
Recall that $\lambda_0=\inf_{\alpha\in\R}\lambda(\alpha)$ introduced in \eqref{eq:lambda0}. We collect from \cite{B.H.notes} some important properties of the function $\alpha\mapsto\lambda(\alpha)$.
\begin{thm}\label{thm:M-op}~
\begin{enumerate}

\item There exists a unique $\tau_{0}\in\R$ such that 
\(
\lambda_{0}=\lambda(\tau_{0}).
\)
\item $\tau_{0}<0$ and $\lambda_{0}<\lambda(0)\leq \left( \dfrac{3}{4}\right)^{\frac{4}{3}}<1$.
\item $\displaystyle\lim_{\alpha\rightarrow\pm \infty}\lambda(\alpha)=+\infty.$
\item The minimum of $\lambda$ at $\tau_{0}$ is non-degenerate, that is, $\lambda^{\prime\prime}(\tau_{0})>0$.
\end{enumerate}

\end{thm}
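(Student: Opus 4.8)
The plan is to treat the four assertions together, exploiting that the confining polynomial potential $V_\alpha(t)=(\tfrac{t^2}2+\alpha)^2$ makes $P(\alpha)$ an operator with compact resolvent whose lowest eigenvalue $\lambda(\alpha)$ is simple, with a positive $L^2$-normalized eigenfunction $u_\alpha$, and which is real-analytic in $\alpha$ by Kato's analytic perturbation theory (the potential is polynomial in $\alpha$ and the eigenvalue stays simple). I would first settle (3). For $\alpha\ge 0$ one has $\tfrac{t^2}2+\alpha\ge\alpha$, hence $V_\alpha\ge\alpha^2$ and $\lambda(\alpha)\ge\alpha^2\to+\infty$. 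For $\alpha\to-\infty$ the potential vanishes only at the two points $t=\pm\sqrt{2|\alpha|}$, near which $V_\alpha(t)\approx 2|\alpha|\,(t\mp\sqrt{2|\alpha|})^2$; an IMS localization into neighborhoods of these two wells and their complement (where $V_\alpha$ is bounded below by a large constant) compares $P(\alpha)$ with harmonic oscillators of frequency $\sqrt{2|\alpha|}$, yielding $\lambda(\alpha)\to+\infty$. Continuity of $\lambda$ together with this coercivity shows that $\lambda_0=\inf_\alpha\lambda(\alpha)$ is attained, which is the existence half of (1).

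Next, the Feynman--Hellmann formula gives
\begin{equation*}
\lambda'(\alpha)=\Big\langle u_\alpha,\partial_\alpha P(\alpha)\,u_\alpha\Big\rangle=2\int_{\R}\Big(\tfrac{t^2}2+\alpha\Big)\,|u_\alpha|^2\,dt .
\end{equation*}
At any critical point this forces $\alpha=-\tfrac12\int_{\R} t^2|u_\alpha|^2\,dt<0$; applied at $\tau_0$ it gives $\tau_0<0$. Moreover $\lambda'(0)=\int_{\R} t^2|u_{\alpha=0}|^2\,dt>0$, so $0$ is not a minimizer and $\lambda_0<\lambda(0)$. The bound $\lambda(0)\le(\tfrac34)^{4/3}$ follows from inserting the Gaussian $t\mapsto e^{-ct^2/2}$ into the Rayleigh quotient of $P(0)=-\tfrac{d^2}{dt^2}+\tfrac{t^4}4$ and optimizing at $c=(\tfrac34)^{1/3}$; since $(\tfrac34)^{4/3}<1$, this completes the chain in (2).

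It remains to prove uniqueness in (1) and non-degeneracy (4), and the plan is to reduce both to the single claim that $\lambda'(\alpha)=0$ implies $\lambda''(\alpha)>0$. Differentiating once more and using second-order perturbation theory, at a critical point (where $\langle(\tfrac{t^2}2+\alpha)u_\alpha,u_\alpha\rangle=\tfrac12\lambda'(\alpha)=0$, so $2B_\alpha u_\alpha:=2(\tfrac{t^2}2+\alpha)u_\alpha\perp u_\alpha$) one obtains
\begin{equation*}
\lambda''(\alpha)=2-2\big\langle (P(\alpha)-\lambda(\alpha))^{-1}_{\perp}\,(2B_\alpha u_\alpha),\,2B_\alpha u_\alpha\big\rangle ,
\end{equation*}
where $(P-\lambda)^{-1}_{\perp}$ is the positive reduced resolvent on $\{u_\alpha\}^{\perp}$ (positive since $P|_{\{u_\alpha\}^{\perp}}\ge\lambda_2(\alpha)>\lambda(\alpha)$). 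Granting the claim, every critical point is a strict local minimum; since $\lambda\to+\infty$ at $\pm\infty$, two such points $a<b$ would force an interior maximum $c\in(a,b)$ with $\lambda'(c)=0$ but $\lambda''(c)\le0$, a contradiction. Hence the critical point is unique (so equals $\tau_0$, giving uniqueness in (1)) and non-degenerate (giving (4)).

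The \textbf{main obstacle} is exactly this positivity, i.e. the bound $\big\langle(P-\lambda)^{-1}_{\perp}(2B_\alpha u_\alpha),2B_\alpha u_\alpha\big\rangle<1$ at critical points. Integrating by parts with $-u_\alpha''+B_\alpha^2u_\alpha=\lambda u_\alpha$ (which gives the identity $(P-\lambda)(B_\alpha u_\alpha)=-u_\alpha-2tu_\alpha'$) evaluates the two natural moments, $\|2B_\alpha u_\alpha\|_2^2=4\big(\lambda-\|u_\alpha'\|_2^2\big)$ and $\big\langle(P-\lambda)(2B_\alpha u_\alpha),2B_\alpha u_\alpha\big\rangle=4\int_{\R} t^2|u_\alpha|^2\,dt=-8\alpha$, but these only produce a \emph{lower} bound on the reduced-resolvent term via Cauchy--Schwarz. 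The genuine upper bound requires quantitative control of the spectral gap $\lambda_2(\alpha)-\lambda(\alpha)$ together with the fact that $2B_\alpha u_\alpha$ has small overlap with the low part of the spectrum off $u_\alpha$; this is the fine spectral analysis of the Montgomery family, which I would invoke from (or reproduce following) \cite{B.H.notes} to close the argument.
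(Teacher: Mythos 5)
First, a point of reference: the paper itself contains no proof of this theorem. It is explicitly ``collected from \cite{B.H.notes}'' (Helffer, \emph{The Montgomery operator revisited}), so there is no internal argument to compare yours against line by line; the relevant comparison is between your partial proof plus citation and the paper's pure citation. Everything you actually prove checks out. For (3), $V_\alpha\geq\alpha^2$ handles $\alpha\geq0$, and for $\alpha\to-\infty$ the harmonic approximation at the wells $t=\pm\sqrt{2|\alpha|}$ (where $V_\alpha''=4|\alpha|$) together with IMS localization gives $\lambda(\alpha)\sim\sqrt{2|\alpha|}\to+\infty$; with real-analyticity (type (A) family, simple ground state) this yields existence in (1). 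The Feynman--Hellmann computation correctly gives $\tau_0=-\frac12\int_\R t^2|u_{\tau_0}|^2\,dt<0$ and $\lambda'(0)>0$, hence $\lambda_0<\lambda(0)$; and your Gaussian trial computation is exact: the Rayleigh quotient of $-\frac{d^2}{dt^2}+\frac{t^4}{4}$ at $e^{-ct^2/2}$ is $\frac{c}{2}+\frac{3}{16c^2}$, minimized at $c=(3/4)^{1/3}$ with value exactly $(3/4)^{4/3}<1$. Your second-order perturbation formula at a critical point, the identities $(P(\alpha)-\lambda)(B_\alpha u_\alpha)=-u_\alpha-2tu_\alpha'$, $\|2B_\alpha u_\alpha\|_2^2=4\big(\lambda(\alpha)-\|u_\alpha'\|_2^2\big)$ and $\langle(P(\alpha)-\lambda)(2B_\alpha u_\alpha),2B_\alpha u_\alpha\rangle=-8\alpha$, and the Morse-type argument reducing both uniqueness in (1) and statement (4) to the single claim ``$\lambda'(\alpha)=0\Rightarrow\lambda''(\alpha)>0$'' are all valid.

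The caveat is that the step you flag as the ``main obstacle'' is not a technical loose end but the entire mathematical content of (4) and of uniqueness in (1): the bound $\big\langle(P(\alpha)-\lambda(\alpha))^{-1}_{\perp}(2B_\alpha u_\alpha),2B_\alpha u_\alpha\big\rangle<1$ at critical points is essentially the main theorem of \cite{B.H.notes}, where uniqueness and non-degeneracy of the Montgomery minimum were first established, and it cannot be quoted as a routine spectral estimate. Your own computations show why the elementary route fails: Cauchy--Schwarz only yields the lower bound $\langle R_\perp v,v\rangle\geq\|v\|_2^4/\langle(P-\lambda)v,v\rangle$, and a qualitative gap $\lambda_2(\alpha)>\lambda(\alpha)$ gives nothing in the needed direction. (One remark that would sharpen your setup: the potential is even in $t$, so $u_\alpha$ and $2B_\alpha u_\alpha$ are even, the reduced resolvent acts within the even subspace, and the relevant gap is to the second \emph{even} eigenvalue; but even this does not close the estimate.) So, judged as a self-contained proof, there is a genuine gap at (4) and at uniqueness in (1); judged against the paper, which cites \cite{B.H.notes} for all four statements, your treatment is strictly more explicit and is acceptable exactly to the extent that the final invocation of \cite{B.H.notes} is allowed.
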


\begin{rem}\label{rem:lambda0}
One finds the  numerical approximation $\lambda _{0}\cong 0.57$ (see. \cite{H.M., Mon}).
\end{rem}

As a consequence of Theorem~\ref{thm:M-op}, we may define two functions $z_1(b)$, $z_2(b)$ satisfying 
\[
z_1(b)< \tau_0< z_2(b), \qquad \lambda^{-1}([\lambda_0,b))=(z_1(b), z_2(b)).
\]

For all $\alpha\in\R$,  let $\lambda_2(\alpha)$ be the second eigenvalue of the operator  $P(\alpha)$ introduced in \eqref{eq:P-alpha}. By  continuity of  the functions $\alpha\mapsto\lambda_n(\alpha)$, for all $n\in\{1,2\}$, we get 
\begin{lem}\label{sec-eig}
Let $\tau_0$ be the value defined in Theorem~\ref{thm:M-op}. There exists $\varepsilon_0>0$ such that, if $\alpha\in(\tau_0-\varepsilon_0,\tau_0+\varepsilon_0)$ and $b\in[\lambda_0,\lambda_0+\varepsilon_0)$, then $b<\lambda_2(\alpha)$.
\end{lem}

In the sequel, we consider $\alpha\in(\tau_0-\varepsilon_0,\tau_0+\varepsilon_0)$ and $b\in[\lambda_0,\lambda_0+\varepsilon_0)$, where $\varepsilon_0$ is defined by Lemma~\ref{sec-eig}\,.
Let $u_\alpha$ be the positive normalized ground state of the operator $P(\alpha)$, and let $\pi_\alpha $ be the $L^2$ orthogonal projection on ${\rm Span}(u_\alpha)$. For $\alpha=\tau_0$, we shorten the notation and write $u_0:=u_{\tau_0}$. 

We introduce the regularized resolvent of $P(\alpha)$ by
\begin{equation}\label{eq:reg-resolvent}
R_{\alpha,b}:=(P(\alpha)-b)^{-1}(1-\pi_{\alpha})\,.
\end{equation}

The following lemma is straightforward (see \cite[Lem.~14.2.6]{FH-b}):

\begin{lem} \label{Continuity-Res}
The regularized resolvent $R_{\alpha,b}$ maps $L^2({\R})$ into $B^2(\R)$.
 Moreover, there exist $\varepsilon,C>0$  such that for all $(\alpha,b)\in (-\tau_0-\varepsilon,\tau_0+\varepsilon)\times[\lambda_0,\lambda_0+\varepsilon)$,
\[
\| R_{\alpha, b}u\|_{B^2(\R)}\leq C \|u\|_{L^2(\R)}.
\]
\end{lem}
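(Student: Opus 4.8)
The plan is to establish both the mapping property and the quantitative bound in three steps, the heart of the matter being a uniform a priori elliptic estimate for $P(\alpha)$; the combination of the three then closes the argument.

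First I would explain why $R_{\alpha,b}$ is well defined and bounded on $L^2(\R)$. Since the potential $\left(\frac{t^2}{2}+\alpha\right)^2$ is confining, $P(\alpha)$ has compact resolvent and discrete spectrum $\lambda(\alpha)=\lambda_1(\alpha)<\lambda_2(\alpha)<\cdots$, and on the range of $1-\pi_\alpha$ the operator $P(\alpha)$ has spectrum bounded below by $\lambda_2(\alpha)$. Lemma~\ref{sec-eig}, together with the continuity of $\alpha\mapsto\lambda_2(\alpha)$ on the compact interval $[\tau_0-\varepsilon_0,\tau_0+\varepsilon_0]$, guarantees a uniform gap $\delta:=\inf(\lambda_2(\alpha)-b)>0$ over the admissible range of $(\alpha,b)$. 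Hence $b$ lies in the resolvent set of the restriction of $P(\alpha)$ to $\mathrm{ran}(1-\pi_\alpha)$, so $R_{\alpha,b}$ is well defined and $\|R_{\alpha,b}u\|\le\delta^{-1}\|u\|$.

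Second, writing $v=R_{\alpha,b}u$, so that $(P(\alpha)-b)v=(1-\pi_\alpha)u$ and $v\perp u_\alpha$, I would test the equation against $v$ to control the quadratic form:
\[
Q_\alpha(v)=\langle P(\alpha)v,v\rangle=\langle (1-\pi_\alpha)u,v\rangle+b\|v\|^2\le C\|u\|^2 .
\]
By the definition of $Q_\alpha$ this already bounds $\|v'\|$ and $\left\|\left(\frac{t^2}{2}+\alpha\right)v\right\|$, hence $\|t^2v\|$, uniformly. The genuinely new information — second derivatives and the $t^4$ weight — comes from the third step.

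Third, and this is the \textbf{main step}, I would prove the uniform a priori estimate
\[
\|f''\|+\|t^4f\|\le C\big(\|P(\alpha)f\|+\|f\|\big),\qquad f\in B^2(\R),
\]
with $C$ independent of $\alpha$ in the compact interval. For Schwartz $f$ (then by density in $B^2(\R)$), one expands $\|P(\alpha)f\|^2=\|f''\|^2+\|Vf\|^2-2\,\mathrm{Re}\,\langle f'',Vf\rangle$ with $V=\left(\frac{t^2}{2}+\alpha\right)^2$; integrating by parts twice converts the cross term into $2\int V|f'|^2-\int V''|f|^2$. Since $V''=3t^2+2\alpha$ and $\int V|f'|^2\ge 0$, this gives
\[
\|f''\|^2+\|Vf\|^2\le \|P(\alpha)f\|^2+3\|tf\|^2+2|\alpha|\,\|f\|^2 .
\]
Now $\|Vf\|$ controls $\|t^4f\|$ up to the lower-order quantities $\|t^2f\|$ and $\|f\|$, and every occurrence of $\|tf\|$ and $\|t^2f\|$ is absorbed into a small multiple of $\|t^4f\|^2$ via the elementary pointwise bound $t^{2k}\le \epsilon\, t^8+C_\epsilon$ (so that $\|t^kf\|^2\le\epsilon\|t^4f\|^2+C_\epsilon\|f\|^2$ for $k\in\{1,2\}$); uniformity in $\alpha$ is automatic because $|\alpha|$ stays bounded. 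Combining the three steps then yields $\|P(\alpha)v\|\le\|(1-\pi_\alpha)u\|+b\|v\|\le C\|u\|$, whence $\|v\|_{B^2(\R)}\le C(\|P(\alpha)v\|+\|v\|)\le C\|u\|$.

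The only delicate point is the integration-by-parts identity and the bookkeeping showing that the commutator term $\int V''|f|^2\sim\|tf\|^2$ is genuinely of lower order and can be absorbed; this is precisely the mechanism behind the cited estimate \cite[Lem.~14.2.6]{FH-b}, whose scheme I would follow.
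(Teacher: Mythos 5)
Your proposal is correct: the uniform spectral gap on $\mathrm{ran}(1-\pi_\alpha)$ gives the $L^2$ bound, and your integration-by-parts identity (with $V''=3t^2+2\alpha$ and the absorption $t^{2k}\le\epsilon t^8+C_\epsilon$ for $k=1,2$) yields the uniform a priori estimate $\|f''\|+\|t^4f\|\le C\big(\|P(\alpha)f\|+\|f\|\big)$, which closes the argument. This is essentially the paper's own route: the paper treats the lemma as straightforward and simply points to \cite[Lem.~14.2.6]{FH-b}, and what you have written out is precisely that scheme adapted from the potential $(t+\alpha)^2$ to the quartic potential $\left(\frac{t^2}{2}+\alpha\right)^2$.
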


\section{A family of $1D$ non-linear functionals}\label{non-linear-funct}

Let $b>0$ and $\alpha \in \R$. Consider the functional
\begin{equation}\label{GLF-M*}
B^1(\R)\ni f\mapsto\mathcal{E}_{\alpha,b}(f)=\int_{\R} \left(   |f^{\prime}(t)|^{2}+ \left(\dfrac{t^{2}}{2}+\alpha \right)^{2}|f(t)|^{2}-b|f(t)|^{2}+\frac{b}{2}|f(t)|^{4}\right)dt\,,
\end{equation}
along with the ground state energy
\begin{equation}\label{GLF-M:gse}
\mathfrak{b}(\alpha,b)=\inf\{\mathcal{E}_{\alpha,b}(f)~:~f\in B^1(\R) \}\,,
\end{equation} 
where $B^1(\R)$ is the space introduced in \eqref{B2R*}. We continue to work under the assumptions made in Theorem~\ref{thm:M-op} and afterwards.

Our objective is to prove 

\begin{thm} \label{prop-min}
There exists $b_*>0$ such that, if  $\lambda_0<b\leq b_*$, then there exists a unique $\xi(b)\in (z_1(b),z_2(b))$ satisfying
\[
\mathfrak{b}(\xi(b),b)=\inf_{\alpha\in\R} {\mathfrak b}(\alpha,b)\,.
\]
Furthermore, 
\begin{itemize}
\item the function $b\mapsto \xi(b)$ is a $C^\infty$ function on $(\lambda_0,b_*]$ with $\xi(\lambda_0)=\tau_0$\,;
\item As $b\searrow \lambda_0 $, $\displaystyle
 \inf_{\alpha} \mathfrak{b}(\alpha,b)=-\frac{1}{2b} \dfrac{(b-\lambda_0)^2 }{\|u_{0}\|_4^4}\big(1+o(1)\big)$\,.
\end{itemize}
\end{thm}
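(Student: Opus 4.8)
The plan is to localize the minimization to the spectral window $(z_1(b),z_2(b))$, perform a Lyapunov--Schmidt reduction that yields a precise energy expansion for each fixed $\alpha$, and then minimize the resulting smooth function of $\alpha$.

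First I would dispose of the trivial range. If $b\le\lambda(\alpha)$, then $Q_\alpha(f)\ge\lambda(\alpha)\|f\|_2^2\ge b\|f\|_2^2$, so $\mathcal E_{\alpha,b}(f)\ge\frac b2\|f\|_4^4\ge 0$ and $\mathfrak b(\alpha,b)=0$, attained only at $f=0$; testing with $\epsilon u_\alpha$ shows $\mathfrak b(\alpha,b)<0$ precisely when $b>\lambda(\alpha)$, i.e. exactly for $\alpha\in(z_1(b),z_2(b))$. Hence any minimizing $\alpha$ lies in this open window, which by continuity shrinks into $(\tau_0-\varepsilon_0,\tau_0+\varepsilon_0)$ once $b_*$ is close enough to $\lambda_0$, so Lemma~\ref{sec-eig} (giving $b<\lambda_2(\alpha)$) applies throughout. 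For fixed such $\alpha$, a minimizer $f_{\alpha,b}$ exists by the direct method, the confining potential $(t^2/2+\alpha)^2$ providing compactness of $B^1(\R)\hookrightarrow L^2\cap L^4$. Writing $s=\langle f,u_\alpha\rangle$ and $f=su_\alpha+w$ with $w\perp u_\alpha$, the bound $Q_\alpha(w)-b\|w\|_2^2\ge(\lambda_2(\alpha)-b)\|w\|_2^2\ge0$ together with $s^2\le\|u_\alpha\|_{4/3}^2\|f\|_4^2$ gives $\mathcal E_{\alpha,b}(f)\ge-(b-\lambda(\alpha))\|u_\alpha\|_{4/3}^2\|f\|_4^2+\frac b2\|f\|_4^4$, an a priori estimate forcing $\|f_{\alpha,b}\|_4^2=O(b-\lambda(\alpha))$ and, after bootstrapping the Euler--Lagrange equation, smallness of $f_{\alpha,b}$ in $B^1(\R)$ as $b\searrow\lambda_0$.

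Next I would run the reduction on the Euler--Lagrange equation $(P(\alpha)-b)f+bf^3=0$. Its projection onto $(\mathrm{Span}\,u_\alpha)^\perp$ reads $w=-b\,R_{\alpha,b}\big((su_\alpha+w)^3\big)$, which for small $s$ has a unique solution $w=w(s,\alpha,b)$ by the contraction principle, using the uniform bound on $R_{\alpha,b}$ from Lemma~\ref{Continuity-Res}; moreover $w=O(s^3)$ and $w$ is odd in $s$. Substituting into the projection onto $u_\alpha$ and dividing by $s$ produces, on the nontrivial branch, a relation even in $s$, hence a smooth equation $G(\sigma,\alpha,b)=(\lambda(\alpha)-b)+b\sigma\|u_\alpha\|_4^4+O(\sigma^2)=0$ in $\sigma=s^2$. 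Since $\partial_\sigma G=b\|u_\alpha\|_4^4+O(\sigma)\neq0$, the implicit function theorem yields a smooth $\sigma(\alpha,b)=\frac{b-\lambda(\alpha)}{b\|u_\alpha\|_4^4}\big(1+O(b-\lambda(\alpha))\big)$; by the a priori smallness this branch is the global minimizer, unique up to sign. Pairing the equation with $f$ gives the virial identity $Q_\alpha(f_{\alpha,b})-b\|f_{\alpha,b}\|_2^2=-b\|f_{\alpha,b}\|_4^4$, whence
\[
\mathfrak b(\alpha,b)=-\tfrac b2\|f_{\alpha,b}\|_4^4=-\frac{(b-\lambda(\alpha))^2}{2b\|u_\alpha\|_4^4}\big(1+O(b-\lambda(\alpha))\big),
\]
uniformly for $\alpha$ in the window, and $\mathfrak b$ is $C^\infty$ in $(\alpha,b)$.

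Finally I would minimize the smooth function $\alpha\mapsto\mathfrak b(\alpha,b)$, which vanishes at the endpoints $z_{1,2}(b)$ and is negative inside, so its minimum is interior. Differentiating the expansion and using $\lambda'(\tau_0)=0$, $\lambda''(\tau_0)>0$ from Theorem~\ref{thm:M-op}, I would show that at any interior critical point $\alpha_c$ one has $\lambda'(\alpha_c)=O(b-\lambda_0)$, hence $\alpha_c=\tau_0+O(b-\lambda_0)$, and that the second derivative there equals $\frac{(b-\lambda(\alpha_c))\lambda''(\alpha_c)}{b\|u_{\alpha_c}\|_4^4}+O\big((b-\lambda_0)^2\big)>0$. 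This strict positivity at every critical point gives a unique minimizer $\xi(b)$, and the implicit function theorem applied to $\partial_\alpha\mathfrak b(\xi(b),b)=0$ makes $b\mapsto\xi(b)$ of class $C^\infty$ on $(\lambda_0,b_*]$; since $\xi(b)\in(z_1(b),z_2(b))$ and this interval collapses to $\tau_0$, we obtain $\xi(b)\to\tau_0$, i.e. $\xi(\lambda_0)=\tau_0$. Because $\lambda'(\tau_0)=0$ forces $b-\lambda(\xi(b))=(b-\lambda_0)(1+O(b-\lambda_0))$ and $\|u_{\xi(b)}\|_4^4=\|u_0\|_4^4(1+O(b-\lambda_0))$, evaluating the expansion at $\xi(b)$ yields $\inf_\alpha\mathfrak b(\alpha,b)=-\frac1{2b}\frac{(b-\lambda_0)^2}{\|u_0\|_4^4}(1+o(1))$. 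I expect the main obstacle to be making the reduction uniform in $\alpha$ over the window that shrinks as $b\searrow\lambda_0$, so that the $O(b-\lambda(\alpha))$ remainders are controlled sharply enough both to isolate the leading term and to certify the sign of $\partial_\alpha^2\mathfrak b$ needed for uniqueness and the implicit function theorem; securing the compactness and smallness of the global minimizers on the unbounded line is the other delicate point.
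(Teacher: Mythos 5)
Your proposal is correct, and its skeleton coincides with the paper's proof: localization of any minimizing $\alpha$ to the window $(z_1(b),z_2(b))$, which collapses onto $\tau_0$; an a priori smallness bound for the minimizer in $B^1(\R)$ (the paper's Lemma~\ref{lem:norm-f}); a reduction based on the regularized resolvent $R_{\alpha,b}$ producing a smooth scalar relation between $\langle f_{\alpha,b},u_\alpha\rangle^2$ and $b-\lambda(\alpha)$ (your bifurcation equation is exactly the paper's \eqref{eq:delta*}, which it imports from \cite[Lem.~14.2.9]{FH-b} instead of rederiving by contraction as you do); and a final implicit function theorem at $(\tau_0,\lambda_0)$ powered by $\lambda''(\tau_0)>0$. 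The genuine divergence is in how uniqueness of $\xi(b)$ is obtained. You differentiate the asymptotic expansion of $\mathfrak{b}(\cdot,b)$ twice in $\alpha$ and certify $\partial_\alpha^2\mathfrak{b}>0$ at every critical point, then conclude by a Rolle-type argument; for this to be rigorous you must know that the $O(b-\lambda(\alpha))$ remainder is stable under two $\alpha$-derivatives, which is precisely the obstacle you flag. It does follow from your construction, but the clean way to express it is to record the exact identity $\mathfrak{b}(\alpha,b)=-(b-\lambda(\alpha))^2\,\Phi\big(b-\lambda(\alpha),\alpha,b\big)$ with $\Phi$ smooth and $\Phi(0,\alpha,b)=1/\big(2b\|u_\alpha\|_4^4\big)$, and differentiate that identity rather than the expansion. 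The paper sidesteps this issue altogether: the Feynman--Hellmann identity \eqref{Fey-Hel} holds exactly at any minimizer, and combined with the representation $f_{\alpha,b}=\delta\, T(\delta^2,\alpha,b)$ it yields the critical-point equation $\lambda'(\alpha_0)=(b-\lambda(\alpha_0))\,\widetilde{a}(\alpha_0,b)$ with $\widetilde{a}$ smooth; a single application of the implicit function theorem to $h(\alpha,b)=\lambda'(\alpha)-(b-\lambda(\alpha))\widetilde{a}(\alpha,b)$, whose $\alpha$-derivative at $(\tau_0,\lambda_0)$ equals $\lambda''(\tau_0)>0$, then delivers uniqueness and the smoothness of $b\mapsto\xi(b)$ in one stroke, with no differentiation of remainders. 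Your endgame --- showing $b-\lambda(\xi(b))=(b-\lambda_0)(1+o(1))$ and $\|u_{\xi(b)}\|_4^4\to\|u_0\|_4^4$ to evaluate the energy at $\xi(b)$ --- is the same as the paper's final step.
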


The starting point is the following preliminary result:

\begin{thm}\label{GLF-M}
Let $b>0$ and $\alpha\in \R$.  Then the following hold:
\begin{enumerate}
\item The functional $\mathcal{E}_{\alpha,b}$ has a strictly positive minimizer $f_{\alpha,b}$ in the space $B^1(\R)$ if and only if $\lambda(\alpha)<b$ . Furthermore, the minimizer satisfies the Euler-Lagrange equation
\begin{equation}\label{eqt-f}
-f^{\prime\prime}_{\alpha,b}+\left( \frac{t^2}{2}+\alpha                    \right)^2 f_{\alpha,b}=bf_{\alpha,b} (1-|f_{\alpha,b}|^2),
\end{equation}
and the inequality 
\[
\|f_{\alpha,b}\|_{\infty}\leq 1.
\]

\item The ground state energy in \eqref{GLF-M:gse} satisfies
\begin{equation}\label{inf-formula}
\mathfrak{b}(\alpha,b)= -\frac{b}{2}\|f_{\alpha,b}\|_4^4.
\end{equation}
\item
There exists $\alpha_0\in (z_1(b), z_2(b))$ such that, 
\[
\mathfrak{b}(\alpha_0,b)=\inf_{\alpha \in \R} \mathfrak{b}(\alpha,b).
\]
\item If $b< \lambda(0)$, then $\alpha_0<0.$
\item If $b >0$. The map 
$
\alpha\in(z_1(b), z_2(b))\mapsto f_{\alpha,b}\in B^1(\R)  
$
is $C^{\infty}.$
\item (Feynman-Hellmann) 
\begin{equation}\label{Fey-Hel}
\int_{\R}\left(\frac{t^2}{2}+\alpha_0\right)|f_{\alpha_0}(t)|^2 dt=0.
\end{equation}
\end{enumerate}
\end{thm}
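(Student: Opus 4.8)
The plan is to treat the six items in a logical order: first establish existence, regularity and the $L^\infty$ bound of the positive minimizer (item 1), read off the energy identity (item 2) together with the uniqueness implicit in the notation $f_{\alpha,b}$, then obtain the smooth dependence on $\alpha$ by the implicit function theorem (item 5), and finally deduce the optimal $\alpha_0$ (items 3 and 4) and the Feynman--Hellmann relation (item 6).

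For item (1) I would argue from the min--max bound $Q_\alpha(f)\ge\lambda(\alpha)\|f\|_2^2$. When $\lambda(\alpha)\ge b$ this gives $\mathcal E_{\alpha,b}(f)\ge\frac b2\|f\|_4^4\ge0$, so the infimum is $0$ and is attained only at $f\equiv0$; no strictly positive minimizer exists. When $\lambda(\alpha)<b$, testing with $su_\alpha$ gives $\mathcal E_{\alpha,b}(su_\alpha)=s^2(\lambda(\alpha)-b)+\frac b2 s^4\|u_\alpha\|_4^4<0$ for small $s$, so the infimum is strictly negative. To produce a minimizer I run the direct method in $B^1(\R)$: the quartic confining potential $(\tfrac{t^2}2+\alpha)^2\sim\tfrac{t^4}4$ yields, after splitting $\|f\|_2^2$ over $\{|t|\le T\}$ and $\{|t|>T\}$ and using Young's inequality, a coercivity estimate $\mathcal E_{\alpha,b}(f)\ge c\|f\|_{B^1}^2-C$; a minimizing sequence is therefore bounded in $B^1(\R)$, and---this is the decisive point---the compactness of the embedding of $B^1(\R)$ into $L^2(\R)$ and $L^4(\R)$, again forced by the confining potential, lets me pass to the limit, with $Q_\alpha$ weakly lower semicontinuous and the $L^2$ and $L^4$ terms converging. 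Replacing $f$ by $|f|$ via the diamagnetic inequality gives a nonnegative minimizer; elliptic regularity applied to the Euler--Lagrange equation (obtained by a standard variation) gives smoothness, the strong maximum principle upgrades $f\ge0$ to $f>0$, and evaluating the equation at an interior maximum yields $\|f\|_\infty\le1$.

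Item (2) is immediate: multiplying \eqref{eqt-f} by $f_{\alpha,b}$ and integrating gives $Q_\alpha(f)=b\|f\|_2^2-b\|f\|_4^4$, whence $\mathcal E_{\alpha,b}(f_{\alpha,b})=-\frac b2\|f_{\alpha,b}\|_4^4$. For the uniqueness of the positive minimizer (so that $f_{\alpha,b}$ is well defined) and for item (5) I would invoke a Brezis--Oswald type argument: writing the equation as $-f''+Vf=g(f)$ with $g(s)/s=b(1-s^2)$ strictly decreasing on $(0,\infty)$ forces the positive solution to be unique. Smoothness then follows from the implicit function theorem applied to $F(\alpha,f)=P(\alpha)f-bf(1-f^2)$ as a map $\R\times B^2(\R)\to L^2(\R)$: the linearization $\mathcal L_\alpha=P(\alpha)-b+3bf^2$ splits as $\big(P(\alpha)-b+bf^2\big)+2bf^2$, where the first operator annihilates the positive ground state $f$ and is therefore nonnegative, while the strictly positive term $2bf^2$ makes $\mathcal L_\alpha>0$; since $P(\alpha)$ has compact resolvent, $\mathcal L_\alpha$ is invertible $B^2(\R)\to L^2(\R)$, the IFT produces a $C^\infty$ branch, and by uniqueness this branch is exactly $\alpha\mapsto f_{\alpha,b}$.

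For items (3), (4) and (6): by (1)--(2) the function $\mathfrak b(\alpha,b)=-\frac b2\|f_{\alpha,b}\|_4^4$ is strictly negative exactly on $(z_1(b),z_2(b))=\{\lambda(\cdot)<b\}$ and vanishes elsewhere, and the uniform-in-$\alpha$ coercivity bounding $\|f_{\alpha,b}\|_{B^1}$ near the endpoints, together with $b\|f\|_4^4\le(b-\lambda(\alpha))\|f\|_2^2$, shows $\mathfrak b(\alpha,b)\to0$ as $\alpha\to z_i(b)$; hence the continuous (by item 5) negative function $\mathfrak b(\cdot,b)$ attains its minimum at an interior $\alpha_0\in(z_1(b),z_2(b))$, which is item (3). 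For item (4), when $b<\lambda(0)$ we have $0\notin(z_1(b),z_2(b))$, and since this interval contains $\tau_0<0$ it must lie in $(-\infty,0)$, so $\alpha_0<0$. Item (6) is the envelope/Feynman--Hellmann identity: differentiating $\mathfrak b(\alpha,b)=\mathcal E_{\alpha,b}(f_{\alpha,b})$ in $\alpha$, the $f$-variation drops out because $f_{\alpha,b}$ is a critical point, leaving $\frac{d}{d\alpha}\mathfrak b=2\int_{\R}(\tfrac{t^2}2+\alpha)|f_{\alpha,b}|^2\,dt$, which vanishes at the interior minimum $\alpha_0$. I expect the main obstacle to be the existence step of item (1)---the coercivity estimate and the compactness that defeats the loss of compactness on the whole line---with the non-degeneracy of the Hessian in item (5) as the second delicate point, resolved by the splitting $\mathcal L_\alpha=(P(\alpha)-b+bf^2)+2bf^2$.
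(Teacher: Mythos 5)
Your proposal is correct, and it is worth setting it against what the paper actually does: the paper gives no self-contained proof of Theorem~\ref{GLF-M}, it simply states that the proof is obtained by adapting \cite[Section~14.2]{FH-b}, written there for the translation-type potential $(t+\alpha)^2$ of \eqref{eq:GLF-Pan}. Your handling of items (1)--(4) and (6) --- direct method with coercivity and the compactness of $B^1(\R)\hookrightarrow L^2(\R)\cap L^4(\R)$ forced by the quartic potential, maximum principle for $\|f_{\alpha,b}\|_\infty\leq 1$, the energy identity from multiplying \eqref{eqt-f} by $f_{\alpha,b}$, the vanishing of $\mathfrak b(\cdot,b)$ at the endpoints $z_i(b)$ to locate an interior minimum, the interval geometry (the connected set $(z_1(b),z_2(b))$ contains $\tau_0<0$ but not $0$ when $b<\lambda(0)$), and the envelope argument for Feynman--Hellmann --- is exactly that standard route. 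Where you genuinely diverge is uniqueness and item (5): you use a Brezis--Oswald (hidden convexity) argument plus the implicit function theorem applied to $F(\alpha,f)=P(\alpha)f-bf(1-f^2)$, with invertibility of the linearization obtained from the splitting $\mathcal L_\alpha=\big(P(\alpha)-b+bf_{\alpha,b}^2\big)+2bf_{\alpha,b}^2$, the first summand being nonnegative because it annihilates the positive ground state $f_{\alpha,b}$, and compactness of the resolvent upgrading strict positivity of the form to invertibility. The paper (following \cite{FH-b}) instead develops in Section~4 the regularized-resolvent series $f_{\alpha,b}=\sum_j\delta^{3^j}G_{\alpha,b}^j(u_\alpha)$; that route only works for $(\alpha,b)$ near $(\tau_0,\lambda_0)$ (it needs $\|f_{\alpha,b}\|_{B^1(\R)}$ small), but it buys the quantitative relation \eqref{eq:delta*} and the expansion $\|f_{\alpha,b}\|_4^4=\delta^4\|u_\alpha\|_4^4+\mathcal O(\delta^6)$ that drive the asymptotics of Theorem~\ref{prop-min}; your IFT route is more direct, needs no smallness of $b-\lambda_0$, and gives smoothness on all of $(z_1(b),z_2(b))$ for every $b>0$, which is what item (5) actually asserts, but it yields no expansion. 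One detail you should tighten: to identify the IFT branch with $f_{\alpha,b}$ via uniqueness of \emph{positive} solutions, you must check the branch stays positive; closeness to $f_{\alpha_*,b}$ in $B^2(\R)\subset L^\infty(\R)$ gives positivity only on compact sets (both functions decay at infinity), and near infinity one must argue from the equation --- at a negative interior minimum $t_1$ of the branch one would get $\left(\tfrac{t_1^2}{2}+\alpha\right)^2\leq b$, which is impossible for $|t_1|$ large --- before the strong maximum principle and uniqueness can be invoked.
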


The proof of Theorem~\ref{GLF-M} is obtained by adapting the same analysis of \cite[Section 14.2]{FH-b} devoted to the functional 

\begin{equation}\label{eq:GLF-Pan}
\mathcal{F}_{\alpha,b}(f)=\int_{\R} \left(   |f^{\prime}(t)|^{2}+ \left(t+\alpha \right)^{2}|f(t)|^{2}-b|f(t)|^{2}+\frac{b}{2}|f(t)|^{4}\right)dt.
\end{equation}

\begin{rem}\label{rem:GLF-M*}
The existing results on the functional  in \eqref{eq:GLF-Pan} suggest that Theorem~\ref{prop-min} holds for all $b>\lambda_0$ (see \cite{CR1,CR2,CR3}). However, in the new functional \eqref{GLF-M*}, the presence of the non-translation invariant  potential term 
$\left(\frac{t^2}2+\alpha\right)$
causes technical difficulties that prevent the application of the  method of \cite{CR1,CR2,CR3}. 
\end{rem}

According to Theorem~\ref{GLF-M}, we observe that the functional $\mathcal{E}_{\alpha,b}$ has non-trivial minimizers if and only if $\alpha \in (z_1(b), z_2(b))$. Furthermore, as $b\searrow \lambda_0$, $z_1(b),z_2(b)\to\tau_0$ and consequently, $\alpha_0\to\tau_0$. So, if $b$ is sufficiently close to $\lambda_0$, the minimum points of the function $\alpha\mapsto \mathfrak b(\alpha,b)$ are localized in a neighborhood of $\tau_0$.  

In the sequel, we  assume that the pair $(\alpha,b)$ lives in a sufficiently small neighborhood of $(\tau_0,\lambda_0)$ so that the results in Section~\ref{Montgomery} hold.

\begin{lem}\label{lem:delta*}
Let   
\begin{equation}\label{delta}
\delta=\langle  f_{\alpha,b}, u_{\alpha}    \rangle\,.
\end{equation}
Then
\begin{equation}\label{Res1}
(b-\lambda(\alpha))\delta=b \langle f_{\alpha,b}^3, u_{\alpha}\rangle,
\end{equation}
and
\begin{equation}\label{Res2}
f_{\alpha,b}+b R_{\alpha, b}(f^3_{\alpha,b})=\delta u_{\alpha}\,.
\end{equation}
\end{lem}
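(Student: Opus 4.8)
The plan is to read both identities directly off the Euler--Lagrange equation \eqref{eqt-f}, rewritten in operator form. Recalling the operator $P(\alpha)$ from \eqref{eq:P-alpha}, equation \eqref{eqt-f} reads
\[
(P(\alpha)-b)f_{\alpha,b}=-b\,f_{\alpha,b}^3 .
\]
Before manipulating this, I would record the two facts that make the right-hand side manageable. Since $\|f_{\alpha,b}\|_\infty\le 1$ by Theorem~\ref{GLF-M} and $f_{\alpha,b}\in L^2(\R)$, the cube $f_{\alpha,b}^3$ lies in $L^2(\R)$; and since we work in the regime of Lemma~\ref{sec-eig}, we have $\lambda(\alpha)<b<\lambda_2(\alpha)$, so $b$ lies strictly between the first two eigenvalues of $P(\alpha)$ and is not in its spectrum. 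This is what guarantees that $P(\alpha)-b$ is boundedly invertible on $\mathrm{Ran}(1-\pi_\alpha)$ and that the regularized resolvent $R_{\alpha,b}$ of \eqref{eq:reg-resolvent} is well defined on all of $L^2(\R)$, with the mapping property of Lemma~\ref{Continuity-Res} at our disposal.

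For \eqref{Res1}, I would take the $L^2$ inner product of the displayed equation with the ground state $u_\alpha$. Using that $P(\alpha)$ is self-adjoint on $B^2(\R)$ with $P(\alpha)u_\alpha=\lambda(\alpha)u_\alpha$, the left-hand side becomes $\langle f_{\alpha,b},(P(\alpha)-b)u_\alpha\rangle=(\lambda(\alpha)-b)\delta$, while the right-hand side is $-b\langle f_{\alpha,b}^3,u_\alpha\rangle$. Rearranging yields $(b-\lambda(\alpha))\delta=b\langle f_{\alpha,b}^3,u_\alpha\rangle$, which is exactly \eqref{Res1}. Here one should note that $f_{\alpha,b}\in B^2(\R)$ (by elliptic regularity applied to \eqref{eqt-f}), so the pairing $\langle P(\alpha)f_{\alpha,b},u_\alpha\rangle$ is legitimate and moving $P(\alpha)$ onto $u_\alpha$ is justified by the decay built into $B^2(\R)$.

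For \eqref{Res2}, I would apply the spectral projection $1-\pi_\alpha$ to the operator form of \eqref{eqt-f}. Since $\pi_\alpha$ projects onto the eigenspace $\mathrm{Span}(u_\alpha)$ of $P(\alpha)$, it commutes with $P(\alpha)$ and hence with $P(\alpha)-b$; moreover $(1-\pi_\alpha)f_{\alpha,b}=f_{\alpha,b}-\delta u_\alpha$. Thus
\[
(P(\alpha)-b)\bigl(f_{\alpha,b}-\delta u_\alpha\bigr)=-b\,(1-\pi_\alpha)f_{\alpha,b}^3 .
\]
The vector $f_{\alpha,b}-\delta u_\alpha$ lies in $\mathrm{Ran}(1-\pi_\alpha)$, on which $P(\alpha)-b$ is invertible; applying that inverse (equivalently, composing with $R_{\alpha,b}$) gives $f_{\alpha,b}-\delta u_\alpha=-b\,R_{\alpha,b}(f_{\alpha,b}^3)$, which is \eqref{Res2}.

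The computations are elementary once the functional-analytic framework is in place, so there is no serious obstacle; the only points requiring care are the ones flagged above, namely checking $f_{\alpha,b}^3\in L^2(\R)$ and $f_{\alpha,b}\in B^2(\R)$ so that $P(\alpha)$ acts classically and $R_{\alpha,b}$ is applicable, and invoking Lemma~\ref{sec-eig} to ensure $b$ sits strictly between $\lambda(\alpha)$ and $\lambda_2(\alpha)$ so that the restriction of $P(\alpha)-b$ to $\mathrm{Ran}(1-\pi_\alpha)$ is genuinely invertible.
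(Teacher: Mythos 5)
Your proposal is correct and follows essentially the same route as the paper: identity \eqref{Res1} comes from pairing the operator form $(P(\alpha)-b)f_{\alpha,b}=-bf_{\alpha,b}^3$ of \eqref{eqt-f} with $u_\alpha$ and using $P(\alpha)u_\alpha=\lambda(\alpha)u_\alpha$, while \eqref{Res2} comes from projecting out the $u_\alpha$-component (via the commutation of $\pi_\alpha$ with $P(\alpha)$) and applying the regularized resolvent $R_{\alpha,b}$. Your additional checks ($f_{\alpha,b}^3\in L^2(\R)$, $f_{\alpha,b}\in B^2(\R)$, and $b<\lambda_2(\alpha)$ via Lemma~\ref{sec-eig} so that $P(\alpha)-b$ is invertible on $\mathrm{Ran}(1-\pi_\alpha)$) are exactly the justifications the paper leaves implicit.
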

\begin{proof}
The formula in \eqref{Res1} results from \eqref{eqt-f} because $P(\alpha)u_\alpha=\lambda(\alpha) u_\alpha$.
Next we prove \eqref{Res2}. 
Note that $\pi_\alpha(P(\alpha)-b)f_{\alpha,b}=\delta (P(\alpha)-b)u_\alpha$. We may write \eqref{eqt-f} as $bf_{\alpha,b}^3=-\big(P(\alpha)-b\big)f_{\alpha,b}$. Consequently,
\begin{align*}
 R_{\alpha, b}(bf^3_{\alpha,b})&= (P(\alpha)-b)^{-1}(bf^3_{\alpha,b}-\pi_{\alpha}(bf^3_{\alpha,b}))\\
 &= - f_{\alpha,b}+\delta u_{\alpha}.
\end{align*}
Here is the identity in \eqref{Res2}.
\end{proof}

Since $B^1(\R)$ is embedded in $L^\infty(\R)$, we  can define the following map
\begin{equation}\label{eq:G}
B^1(\R)\ni u\mapsto G_{\alpha,b}(u)=-b R_{\alpha,b} (u^3)\,.
\end{equation}

As a consequence of Lemma \ref{Continuity-Res}, we find
\begin{lem}\label{lem:G}
There exist a neighborhood $\mathcal N_0=(\tau_0-\varepsilon,\tau_0+\varepsilon)\times[\lambda_0,\lambda_0+\varepsilon)$ and  a constant $C>0$ such that, for all $(\alpha,b)\in\mathcal N_0$, the map $G_{\alpha,b}$ maps $B^1(\R)$ to itself, and   for all $u\in B^1(\R)$,
\[
\|G_{\alpha,b}(u)\|_{B^1(\R)}\leq C \|u\|^3_{B^1(\R)}\,.         
\]
\end{lem}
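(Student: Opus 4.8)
The statement to prove (Lemma~\ref{lem:G}) asserts that the nonlinear map $G_{\alpha,b}(u) = -bR_{\alpha,b}(u^3)$ sends $B^1(\R)$ into itself with the cubic estimate $\|G_{\alpha,b}(u)\|_{B^1(\R)} \le C\|u\|_{B^1(\R)}^3$, uniformly for $(\alpha,b)$ in a neighborhood of $(\tau_0,\lambda_0)$. Let me think about how I would organize the proof.

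\medskip

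The plan is to chain together three ingredients. First, the algebra and continuity properties of the space $B^1(\R)$: I would observe that since $B^1(\R) \hookrightarrow L^\infty(\R)$ (this Sobolev embedding in one dimension is noted just before the definition of $G_{\alpha,b}$), the space $B^1(\R)$ is a Banach algebra under pointwise multiplication, or at least that cubing is a bounded cubic map. Concretely, for $u \in B^1(\R)$ I want to show $u^3 \in L^2(\R)$ with $\|u^3\|_{L^2} \le \|u\|_\infty^2 \|u\|_{L^2} \le C\|u\|_{B^1}^3$. Second, I would invoke Lemma~\ref{Continuity-Res}: the regularized resolvent $R_{\alpha,b}$ maps $L^2(\R)$ into $B^2(\R)$ with $\|R_{\alpha,b}v\|_{B^2(\R)} \le C\|v\|_{L^2(\R)}$, uniformly for $(\alpha,b)$ in the stated neighborhood. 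Third, I would use the (trivial) continuous inclusion $B^2(\R) \hookrightarrow B^1(\R)$.

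\medskip

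The steps, in order: (i) fix $u \in B^1(\R)$ and verify $u^3 \in L^2(\R)$ with the bound $\|u^3\|_{L^2(\R)} \le \|u\|_{L^\infty(\R)}^2\,\|u\|_{L^2(\R)} \le C\|u\|_{B^1(\R)}^3$, where the first inequality is immediate by pulling two factors out in sup norm and the last uses the embedding constant of $B^1(\R) \hookrightarrow L^\infty(\R)$; (ii) apply Lemma~\ref{Continuity-Res} to $v = u^3$ to get $R_{\alpha,b}(u^3) \in B^2(\R)$ with $\|R_{\alpha,b}(u^3)\|_{B^2(\R)} \le C\|u^3\|_{L^2(\R)}$, uniformly over the neighborhood $\mathcal{N}_0$ (shrinking the $\varepsilon$ of Lemma~\ref{Continuity-Res} if needed); (iii) combine with $\|\cdot\|_{B^1(\R)} \le \|\cdot\|_{B^2(\R)}$ and multiply by the harmless factor $|b|$, which is bounded on $\mathcal{N}_0$, to conclude $\|G_{\alpha,b}(u)\|_{B^1(\R)} \le C\|u\|_{B^1(\R)}^3$ with a constant $C$ absorbing all the intermediate constants.

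\medskip

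I do not expect a serious obstacle here, which is consistent with the earlier remark that ``the following lemma is straightforward.'' The only point requiring a little care is making the embedding $B^1(\R) \hookrightarrow L^\infty(\R)$ explicit and uniform: one should confirm that there is a single constant controlling $\|u\|_{L^\infty}$ by $\|u\|_{B^1}$, which follows from the standard one-dimensional estimate $\|u\|_{L^\infty(\R)}^2 \le 2\|u\|_{L^2(\R)}\|u'\|_{L^2(\R)}$ and the fact that both $\|u\|_{L^2}$ and $\|u'\|_{L^2}$ are dominated by $\|u\|_{B^1(\R)}$. The mild bookkeeping is to ensure the neighborhood $\mathcal{N}_0$ on which the final constant $C$ is uniform matches the intersection of the neighborhoods furnished by Lemma~\ref{Continuity-Res} and by the boundedness of $b$; taking $\varepsilon$ small enough handles this at once.
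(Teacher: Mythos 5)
Your proof is correct and follows exactly the route the paper intends: the paper gives no separate proof, presenting the lemma as an immediate consequence of Lemma~\ref{Continuity-Res} together with the embedding $B^1(\R)\hookrightarrow L^\infty(\R)$ noted just before the definition of $G_{\alpha,b}$, which is precisely your chain $\|u^3\|_{L^2}\leq\|u\|_\infty^2\|u\|_{L^2}\leq C\|u\|_{B^1}^3$ followed by the uniform resolvent bound and the inclusion $B^2(\R)\hookrightarrow B^1(\R)$. No gaps; your added care about the uniformity of the embedding constant and the choice of the common neighborhood $\mathcal N_0$ only makes explicit what the paper leaves implicit.
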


With Lemma~\ref{lem:G} in hand, we can  invert equation $(I-G_{\alpha,\beta})(u)=f$ when the pair $(\alpha,b)$ lives in the neighborhood $\mathcal N_0$, and the norm of $u$ is sufficiently small. We state this as follows.

\begin{lem}\label{corol:G}
There exists a constant $c_*>0$ such that, for all  $(\alpha,b)\in\mathcal N_0$ and $u\in B^1(\R)$ satisfying  $\|u\|_{B^1(\R)}\leq c_*$, the series
$$\mathfrak{t}(u)=  \sum_{j=0}^{\infty} G_{\alpha,b}^j(u)$$
is absolutely convergent.
 Furthermore,
$$\mathfrak t\Big(u-G_{\alpha,b}(u)\Big)=u\,.$$
\end{lem}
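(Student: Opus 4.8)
The plan is to read $\mathfrak{t}$ as the local inverse of the map $\Phi_{\alpha,b}:=I-G_{\alpha,b}$ near the origin of $B^1(\R)$, and to extract the two assertions (absolute convergence of the series and the left-inverse identity) from the cubic bound of Lemma~\ref{lem:G} combined with a contraction argument. First I would settle the convergence, which is the cleaner half. Writing $M_j:=\|G_{\alpha,b}^j(u)\|_{B^1(\R)}$, Lemma~\ref{lem:G} gives $M_0=\|u\|_{B^1(\R)}$ and $M_j\le C M_{j-1}^3$. An immediate induction then yields $M_j\le C^{-1/2}\bigl(C^{1/2}\|u\|_{B^1(\R)}\bigr)^{3^{\,j}}$, so that as soon as $c_*<C^{-1/2}$ the quantity $C^{1/2}\|u\|_{B^1(\R)}$ is strictly less than $1$ and $\sum_j M_j$ converges (indeed doubly-exponentially fast). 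Since the constant $C$ of Lemma~\ref{lem:G} is uniform over $\mathcal N_0$, a single $c_*$ works for all $(\alpha,b)\in\mathcal N_0$, and the same bound shows that $\mathfrak{t}$ sends the ball $\{\|u\|_{B^1(\R)}\le c_*\}$ into a fixed ball of $B^1(\R)$, so that $\mathfrak{t}(u)$ is a well-defined small element.

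For the identity $\mathfrak{t}\bigl(u-G_{\alpha,b}(u)\bigr)=u$ I would argue by uniqueness of small solutions of $\Phi_{\alpha,b}(w)=f$. Fix $u$ with $\|u\|_{B^1(\R)}\le c_*$, put $f:=u-G_{\alpha,b}(u)$, and consider the self-map $T_f(w):=f+G_{\alpha,b}(w)$, whose fixed points are exactly the solutions of $\Phi_{\alpha,b}(w)=f$; the successive-approximation scheme underlying $\mathfrak{t}$ is designed so that its output solves this fixed-point equation. To run Banach's principle I would first record the local Lipschitz estimate $\|G_{\alpha,b}(w_1)-G_{\alpha,b}(w_2)\|_{B^1(\R)}\le C\bigl(\|w_1\|_{B^1(\R)}^2+\|w_2\|_{B^1(\R)}^2\bigr)\|w_1-w_2\|_{B^1(\R)}$, obtained by the very same computation as Lemma~\ref{lem:G}: factor $w_1^3-w_2^3=(w_1-w_2)(w_1^2+w_1w_2+w_2^2)$, use that $B^1(\R)\hookrightarrow L^\infty(\R)$ is a Banach algebra, and apply the uniform resolvent bound of Lemma~\ref{Continuity-Res} to $R_{\alpha,b}$. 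Shrinking $c_*$ if necessary (so that the contraction constant $\approx c_*^2$ is below $1$ and $T_f$ maps a small closed ball into itself), the fixed point of $T_f$ in that ball is unique. Since $u$ itself satisfies $T_f(u)=\bigl(u-G_{\alpha,b}(u)\bigr)+G_{\alpha,b}(u)=u$ and lies in the ball, uniqueness forces $\mathfrak{t}(u-G_{\alpha,b}(u))=u$.

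The main obstacle is not the convergence but the coordination of smallness required to turn $\mathfrak{t}$ into a genuine inverse of $\Phi_{\alpha,b}$: one must choose $c_*$ so that, simultaneously, the doubly-exponential bound holds (forcing $c_*<C^{-1/2}$), every iterate stays inside the ball on which Lemma~\ref{lem:G} and the Lipschitz estimate are valid, and the contraction constant remains $<1$, after which the left-inverse identity is a consequence of uniqueness rather than of the mere summability of the series. Because all the relevant thresholds depend only on the uniform constant $C$ of Lemmas~\ref{Continuity-Res}--\ref{lem:G}, a single $c_*$ can be fixed for every $(\alpha,b)\in\mathcal N_0$. The only ingredient not already stated is the bilinear difference estimate above, but this is routine given the algebra property of $B^1(\R)$ and the uniform bound on $R_{\alpha,b}$; everything else is the standard fixed-point mechanism.
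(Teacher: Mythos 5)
Your first paragraph (the convergence half) is correct and matches what the paper intends: Lemma~\ref{lem:G} gives $M_j\leq CM_{j-1}^3$ for $M_j:=\|G_{\alpha,b}^j(u)\|_{B^1(\R)}$, your induction yields $M_j\leq C^{-1/2}\big(C^{1/2}\|u\|_{B^1(\R)}\big)^{3^j}$, and since $C$ is uniform over $\mathcal N_0$, any $c_*<C^{-1/2}$ gives normal convergence uniformly in $(\alpha,b)$. The paper states the lemma right after Lemma~\ref{lem:G} without writing a proof, and this is the argument it relies on.

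The second half of your proposal has a genuine gap, located in the sentence ``the successive-approximation scheme underlying $\mathfrak t$ is designed so that its output solves this fixed-point equation''. It does not: $\mathfrak t(f)=\sum_{j\geq 0}G_{\alpha,b}^j(f)$ is a sum of \emph{composition} iterates, and for it to be a fixed point of $T_f(w)=f+G_{\alpha,b}(w)$ one would need $G_{\alpha,b}\big(\sum_{j\geq0}G_{\alpha,b}^j(f)\big)=\sum_{j\geq1}G_{\alpha,b}^j(f)$, i.e.\ additivity of $G_{\alpha,b}$; but $G_{\alpha,b}(w)=-bR_{\alpha,b}(w^3)$ is cubic. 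The partial sums of $\mathfrak t$ are not the Picard iterates of $T_f$ (already $f+G(f)+G(G(f))\neq f+G(f+G(f))$, the discrepancy being of order $\|f\|^5$), so your (correct) contraction and uniqueness argument identifies $u$ with the Picard limit, not with $\mathfrak t(u-G_{\alpha,b}(u))$. Moreover this gap cannot be filled while $\mathfrak t$ denotes the composition series, because the identity is then false: writing $G=G_{\alpha,b}$, $R=R_{\alpha,b}$, one computes
\[
\mathfrak t\big(u-G(u)\big)=u-3b^2R\big(u^2R(u^3)\big)+\mathcal O\big(\|u\|_{B^1(\R)}^7\big)\,,
\]
and the fifth-order term is nonzero, e.g.\ for $u$ a small multiple of $u_\alpha$: if $R(u_\alpha^2R(u_\alpha^3))=0$, then $u_\alpha^2R(u_\alpha^3)=c\,u_\alpha$ for some constant $c$; since $R(u_\alpha^3)=c\,u_\alpha^{-1}$ must lie in $L^2(\R)$ while $u_\alpha$ decays at infinity, necessarily $c=0$; then $R(u_\alpha^3)=0$ gives $u_\alpha^3\in{\rm Span}(u_\alpha)$, i.e.\ $u_\alpha$ is constant, contradicting $u_\alpha\in L^2(\R)$. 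Note that this defect is inherited from the statement itself (and propagates to the series formula of Lemma~\ref{B1norm-f}), not introduced by you: the correct formulation, which your fixed-point machinery does prove, is that for small $\|f\|_{B^1(\R)}$ the equation $w-G_{\alpha,b}(w)=f$ has a unique small solution $w=:\mathfrak t(f)$, and it is this inverse map, not the composition series, that should be applied to \eqref{eq:G*}; the properties used downstream, namely $f_{\alpha,b}=\delta u_\alpha+\mathcal O(\delta^3)$ in $B^1(\R)$ and smooth dependence on the parameters, remain valid under that corrected definition.
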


Now we return back to \eqref{Res2} and observe that it can be expressed in the following form 
\begin{equation}\label{eq:G*}
(I-G_{\alpha,b})(f_{\alpha,b})=\delta u_{\alpha}.
\end{equation}
We will apply  Lemma~\ref{corol:G} to invert the  formula \eqref{eq:G*}, but we have to prove first that $\|f_{\alpha,b}\|_{B^1(\R)}$ is sufficiently small, which is our next task.

\begin{lem}\label{lem:norm-f}
There exists a constant $C>0$ such that, for all $\alpha\in\R$ and $b\geq\lambda_0$,  we have
\[
\|f_{\alpha,b}\|_{B^1(\R)}\leq  C b^{3/2} \sqrt{b-\lambda_0}.
\]
\end{lem}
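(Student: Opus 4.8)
The plan is to bound $\|f_{\alpha,b}\|_{B^1(\R)}$ by first controlling the natural energy quantities and then upgrading to the full $B^1$-norm using the Euler-Lagrange equation. The starting point is the energy identity \eqref{inf-formula}, namely $\mathfrak{b}(\alpha,b)=-\tfrac{b}{2}\|f_{\alpha,b}\|_4^4$. Since $f_{\alpha,b}$ is a minimizer and $\mathfrak{b}(\alpha,b)=\inf_{f}\mathcal{E}_{\alpha,b}(f)\le 0$, a cheap upper bound on the energy comes from testing $\mathcal{E}_{\alpha,b}$ against a small multiple of the ground state $u_\alpha$: choosing $f=su_\alpha$ with $s>0$ small gives $\mathcal{E}_{\alpha,b}(su_\alpha)=(\lambda(\alpha)-b)s^2+\tfrac{b}{2}s^4\|u_\alpha\|_4^4$, whose minimum over $s$ is of order $-(b-\lambda(\alpha))^2/\|u_\alpha\|_4^4$. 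Combined with \eqref{inf-formula}, this yields $\|f_{\alpha,b}\|_4^4\lesssim (b-\lambda(\alpha))^2/b^2\le (b-\lambda_0)^2/b^2$ (using $\lambda(\alpha)\ge\lambda_0$), hence
\[
\|f_{\alpha,b}\|_4^2\lesssim \frac{b-\lambda_0}{b}\,.
\]

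Next I would extract the $L^2$-norm. Testing the Euler-Lagrange equation \eqref{eqt-f} against $f_{\alpha,b}$ gives the virial-type identity
\[
Q_\alpha(f_{\alpha,b})=b\|f_{\alpha,b}\|_2^2-b\|f_{\alpha,b}\|_4^4\,,
\]
and since $Q_\alpha(f_{\alpha,b})\ge\lambda(\alpha)\|f_{\alpha,b}\|_2^2\ge\lambda_0\|f_{\alpha,b}\|_2^2$ by the min-max characterization \eqref{Eq:def-lambda-alpha}, we obtain $(b-\lambda_0)\|f_{\alpha,b}\|_2^2\ge b\|f_{\alpha,b}\|_4^4$, equivalently
\[
\|f_{\alpha,b}\|_2^2\ge\frac{b}{b-\lambda_0}\|f_{\alpha,b}\|_4^4\,.
\]
This is the wrong direction; instead the bound I actually want is an \emph{upper} bound on $\|f_{\alpha,b}\|_2^2$, which I would get from the reverse pairing: since $Q_\alpha(f_{\alpha,b})\ge 0$ and $\|f_{\alpha,b}\|_\infty\le 1$, the virial identity gives $b\|f_{\alpha,b}\|_2^2=Q_\alpha(f_{\alpha,b})+b\|f_{\alpha,b}\|_4^4$. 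To close this I would use the quantitative spectral gap: writing $f_{\alpha,b}=\delta u_\alpha+f^\perp$ with $f^\perp\perp u_\alpha$, the min-max for the \emph{second} eigenvalue (Lemma~\ref{sec-eig}, giving $\lambda_2(\alpha)>b$ near $\tau_0$) yields $Q_\alpha(f^\perp)\ge\lambda_2(\alpha)\|f^\perp\|_2^2$. Feeding this into the virial identity and using $\|f_{\alpha,b}\|_4^4\lesssim(b-\lambda_0)^2/b^2$ lets me bound $\|f^\perp\|_2^2$ and $\|f_{\alpha,b}\|_2^2$ each by $O\big((b-\lambda_0)/b\big)$ up to constants, hence $Q_\alpha(f_{\alpha,b})\lesssim b-\lambda_0$.

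Finally, to pass from the form bound $Q_\alpha(f_{\alpha,b})\lesssim b-\lambda_0$ (which controls $\|f'\|_2$ and $\|(\tfrac{t^2}{2}+\alpha)f\|_2$, and through the latter the weighted norm $\|t^2 f\|_2$) to the full $B^1(\R)$-norm $\|f\|_{H^1}+\|t^2 f\|_2$, I would note that these are essentially the same quantities, so
\[
\|f_{\alpha,b}\|_{B^1(\R)}^2\lesssim Q_\alpha(f_{\alpha,b})+\|f_{\alpha,b}\|_2^2\lesssim b-\lambda_0\,,
\]
uniformly for $(\alpha,b)$ near $(\tau_0,\lambda_0)$; the stated bound $\|f_{\alpha,b}\|_{B^1(\R)}\le Cb^{3/2}\sqrt{b-\lambda_0}$ then follows (the $b^{3/2}$ factor being a harmless overestimate in this bounded regime, and providing room away from $\lambda_0$). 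The main obstacle I anticipate is making the spectral-gap argument for $\|f^\perp\|_2$ and $\|f_{\alpha,b}\|_2$ genuinely quantitative and uniform in $(\alpha,b)$: one must control the component $\delta$ along $u_\alpha$ via \eqref{Res1} and the transverse part via the regularized resolvent bound of Lemma~\ref{Continuity-Res}, keeping all constants independent of the pair in the neighborhood $\mathcal{N}_0$. Everything else reduces to the virial identity and the elementary $L^4$ energy estimate.
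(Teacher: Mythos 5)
Your Step~1 has the inequality backwards, and this breaks the whole chain. Testing $\mathcal{E}_{\alpha,b}$ with $s u_\alpha$ and optimizing over $s$ gives an \emph{upper} bound on the ground state energy,
\[
\mathfrak{b}(\alpha,b)\le -\frac{(b-\lambda(\alpha))^2}{2b\,\|u_\alpha\|_4^4}\,;
\]
combined with \eqref{inf-formula}, $\mathfrak{b}(\alpha,b)=-\tfrac{b}{2}\|f_{\alpha,b}\|_4^4$, this reads
\[
\|f_{\alpha,b}\|_4^4\;\ge\;\frac{(b-\lambda(\alpha))^2}{b^2\,\|u_\alpha\|_4^4}\,,
\]
which is a \emph{lower} bound on $\|f_{\alpha,b}\|_4^4$, not the upper bound $\|f_{\alpha,b}\|_4^4\lesssim(b-\lambda(\alpha))^2/b^2$ that you claim. (A bound of that type is indeed true for $b$ close to $\lambda_0$, consistently with \eqref{eq:asymptot-en}, but it cannot come from comparison with a trial function: energy comparison only ever bounds $\mathfrak{b}$ from above, hence $\|f_{\alpha,b}\|_4$ from below.) Since your Step~2 explicitly feeds this unproven $L^4$ upper bound into the virial/spectral-gap argument to control $\delta$, $\|f^\perp\|_2$ and $\|f_{\alpha,b}\|_2$, the proof collapses at this point. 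Your fallback of controlling $\delta$ via \eqref{Res1} is circular for the same reason: \eqref{Res1} bounds $(b-\lambda(\alpha))\delta$ by $b\langle f_{\alpha,b}^3,u_\alpha\rangle$, and estimating that inner product again requires exactly the $L^4$ (or $L^\infty$, $L^2$) control of $f_{\alpha,b}$ that is missing.

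What your proposal lacks is the one idea the paper's proof turns on: on the whole line there is no inequality $\|f\|_2\lesssim\|f\|_4$, and the way around this is the weighted H\"older inequality (writing $f=f_{\alpha,b}$)
\[
\|f\|_2^2=\int_\R |f|(1+t^2)\,|f|(1+t^2)^{-1}\,dt\le \|f\|_4\,\|(1+t^2)f\|_2\,\|(1+t^2)^{-1}\|_4\le C_2\,\|f\|_4\,\|f\|_{B^1(\R)}\,.
\]
Together with the two consequences of $\mathcal{E}_{\alpha,b}(f)\le\mathcal{E}_{\alpha,b}(0)=0$, namely $\|f\|_{B^1(\R)}^2\le C_1b\|f\|_2^2$ and $\tfrac{b}{2}\|f\|_4^4\le(b-\lambda_0)\|f\|_{B^1(\R)}^2$, this closes a bootstrap purely in the unknown $\|f\|_{B^1(\R)}$:
\[
\|f\|_{B^1(\R)}^2\le C_3\, b^{3/4}(b-\lambda_0)^{1/4}\,\|f\|_{B^1(\R)}^{3/2}\,,
\]
which yields the lemma. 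Note also that the statement is claimed for all $\alpha\in\R$ and all $b\ge\lambda_0$, whereas your spectral-gap step (via Lemma~\ref{sec-eig}) and your final bound $\|f\|_{B^1(\R)}^2\lesssim b-\lambda_0$ are confined to a small neighborhood of $(\tau_0,\lambda_0)$; the factor $b^{3/2}$ in the statement is precisely what absorbs the large-$b$ regime in the paper's argument, so this restriction is not merely cosmetic.
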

\begin{proof}
We can find a constant $C_1>0$ such that, for  all $f\in B^1(\R)$ and $(\alpha,b)$,  
\begin{equation}\label{eq:revision}
\begin{aligned}
\|f\|^2_{B^1(\R)}& \leq  C_1\int_{\R}  \left[   |f^\prime(t)|^2 +\left(\alpha +\frac{t^2}2\right)^2 |f(t)|^2                                                       \right]dt\\
&=C_{1}\left\{\mathcal E_{\alpha,b}(f)+\int_\R \left(b|f|^2-\frac{b}2|f|^4\right)\,dx\,\right\},
\end{aligned}
\end{equation}
where $\mathcal E_{\alpha,b}(\cdot)$ is the functional introduced in \eqref{GLF-M*}.

Now we choose  $f=f_{\alpha,b}$. Consequently $\mathcal{E}_{\alpha,b}(f)\leq\mathcal E_{\alpha,b}(0)=  0$. So we can drop the term $\mathcal E_{\alpha,b}(f)$ from \eqref{eq:revision} and get the following two inequalities, 
\begin{equation}\label{NB1-eq1}
\|f\|^2_{B^1(\R)} \leq C_1 b \|f\|^2_{2},
\end{equation}
and
\begin{equation}\label{NB1-eq2}
\begin{aligned}
\frac{b}{2}\|f\|^4_4 &\leq b\|f\|_2^2- \int_{\R}  \left[   |f^\prime(t)|^2 +\left(\alpha +\frac{t^2}2\right)^2 |f(t)|^2                                                       \right]dt\\
&\leq (b-\lambda(\alpha))\|f\|_2^2 \leq (b-\lambda_0)\|f\|^2_{B^1(\R)}~{\rm ~since~}\lambda(\alpha)\geq\lambda_0.
\end{aligned}
\end{equation}
On the other hand, using H\"older's inequality, we write
\begin{multline}\label{NB1-eq3}
\|f\|_{2}^2 =\int_{\R} |f(t)| (1+t^2) |f(t) |(1+t^2)^{-1} dt\\
\leq      \|f\|_4 \|(1+t^2)f\|_2\|(1+t^2)^{-1}\|_4\leq C_2    \|f\|_4 \|f\|_{B^1(\R)}\,,
\end{multline}
for some constant $C_2$ independent of $(\alpha,b)$. Combining \eqref{NB1-eq1}-\eqref{NB1-eq3} gives, for $C_3=2^{1/4}C_1C_2$
\[
\|f\|^2_{B^1(\R)}\leq  C_3  b^{3/4}(b-\lambda_0)^{1/4}\,\|f\|^{3/2}_{B^1(\R)}.
\]
This yields the conclusion in Lemma~\ref{lem:norm-f} with  $C=C_3^2$.
\end{proof}

In the sequel, {\bf we assume the additional condition $C b^{3/2} \sqrt{b-\lambda_0}<c_*$,} where $c_*$ is the constant in Lemma~\ref{corol:G}. Now, Lemma~\ref{lem:norm-f} and the identity \eqref{eq:G*}  yield:

\begin{lem}\label{B1norm-f}
There exists $\varepsilon>0$ such that, for all $(\alpha,b)\in(\tau_0-\varepsilon,\tau_0+\varepsilon)\times [\lambda_0,\lambda_0+\varepsilon)$,  the function $f_{\alpha,b}$ satisfies, 
\begin{equation}\label{series}
f_{\alpha,b}
=\sum_{j=0}^{\infty} \delta^{3^j}G_{\alpha,b}^{j}( u_{\alpha})\,,
\end{equation}
where $\delta$ is introduced in \eqref{delta}.
\end{lem}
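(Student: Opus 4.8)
The plan is to iterate the identity \eqref{eq:G*}. The starting point is Lemma~\ref{lem:delta*}, which rewrites the Euler--Lagrange equation as $(I-G_{\alpha,b})(f_{\alpha,b})=\delta u_\alpha$, and Lemma~\ref{lem:norm-f}, which gives the size estimate $\|f_{\alpha,b}\|_{B^1(\R)}\leq Cb^{3/2}\sqrt{b-\lambda_0}$. Under the standing additional assumption that $Cb^{3/2}\sqrt{b-\lambda_0}<c_*$, this guarantees $\|f_{\alpha,b}\|_{B^1(\R)}\leq c_*$, so the hypotheses of Lemma~\ref{corol:G} are met with $u=f_{\alpha,b}$. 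Applying the inversion operator $\mathfrak t$ of Lemma~\ref{corol:G} to both sides of \eqref{eq:G*} and using $\mathfrak t\big(u-G_{\alpha,b}(u)\big)=u$ yields
\[
f_{\alpha,b}=\mathfrak t\big(f_{\alpha,b}-G_{\alpha,b}(f_{\alpha,b})\big)=\mathfrak t\big(\delta u_\alpha\big)=\sum_{j=0}^\infty G_{\alpha,b}^{\,j}(\delta u_\alpha)\,.
\]
So the core of the argument is simply a valid application of the Neumann-type series from Lemma~\ref{corol:G}.

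The remaining work is bookkeeping the scalar factor $\delta$ through the homogeneity of $G_{\alpha,b}$. By its definition \eqref{eq:G}, $G_{\alpha,b}(u)=-bR_{\alpha,b}(u^3)$ is homogeneous of degree $3$, i.e. $G_{\alpha,b}(cu)=c^3\,G_{\alpha,b}(u)$ for a scalar $c$. Iterating, the $j$-th composition $G_{\alpha,b}^{\,j}$ is homogeneous of degree $3^j$, since each application of $G_{\alpha,b}$ cubes the accumulated scalar. Hence $G_{\alpha,b}^{\,j}(\delta u_\alpha)=\delta^{3^j}G_{\alpha,b}^{\,j}(u_\alpha)$, and substituting this into the series above gives exactly \eqref{series}. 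One should note that $\delta=\langle f_{\alpha,b},u_\alpha\rangle$ is real (both $f_{\alpha,b}$ and $u_\alpha$ are the positive ground states), so the scalar homogeneity applies without complex-conjugation subtleties in the cubic term.

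The main thing to verify carefully is that Lemma~\ref{corol:G} genuinely applies to $u=f_{\alpha,b}$, which is precisely where the standing assumption $Cb^{3/2}\sqrt{b-\lambda_0}<c_*$ enters: it converts the bound of Lemma~\ref{lem:norm-f} into the smallness condition $\|f_{\alpha,b}\|_{B^1(\R)}\leq c_*$ needed for absolute convergence of $\mathfrak t$. The existence of $\varepsilon>0$ making this condition hold on the rectangle $(\tau_0-\varepsilon,\tau_0+\varepsilon)\times[\lambda_0,\lambda_0+\varepsilon)$ follows because $b^{3/2}\sqrt{b-\lambda_0}\to0$ as $b\searrow\lambda_0$, so shrinking $\varepsilon$ forces the left-hand side below $c_*$ uniformly in $\alpha$. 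I do not expect a serious obstacle here; the only genuine care is in justifying the degree-$3^j$ homogeneity cleanly (an easy induction on $j$) and in ensuring the neighborhood $\mathcal N_0$ from Lemma~\ref{lem:G} is compatible with the one from Lemma~\ref{corol:G}, which it is by construction.
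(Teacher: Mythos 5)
Your proof is correct and follows exactly the route the paper intends: it derives \eqref{series} by applying the inversion $\mathfrak t$ of Lemma~\ref{corol:G} to the identity \eqref{eq:G*}, with the smallness hypothesis supplied by Lemma~\ref{lem:norm-f} and the standing assumption $Cb^{3/2}\sqrt{b-\lambda_0}<c_*$. The paper states this lemma as an immediate consequence of those two ingredients without writing out the details; your write-up simply makes explicit the same argument, including the degree-$3^j$ homogeneity bookkeeping that turns $G_{\alpha,b}^{\,j}(\delta u_\alpha)$ into $\delta^{3^j}G_{\alpha,b}^{\,j}(u_\alpha)$.
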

~

\begin{proof}[Proof of Theorem~\ref{prop-min}]~

{\bf Step~1: A spectral expression for $f_{\alpha,b}$.} 

The definition of $\delta$ in \eqref{delta} and Lemma~\ref{lem:norm-f} yield 
\[
0\leq \delta\leq C b^{3/2}\sqrt{b-\lambda_0}\,.
\]
Assuming $b-\lambda_0$ is sufficiently small, we get $0\leq \delta<1$. Consequently, the series
\begin{equation}\label{eq:S}
S(\delta,\alpha,b):=\sum_{j=0}^{\infty} \delta^{3^j}G_{\alpha,b}^{j}( u_{\alpha}),
\end{equation}
is normally convergent in the space $B^1(\R)$ and depends smoothly on the parameters $(\delta,\alpha,b)$.

Later, it will be convenient to write
\[
S(\delta,\alpha , b )= \delta T(\delta^2,\alpha,b),
\]
where, for $\epsilon>0$,
\begin{equation}\label{series-f-2}
T(\epsilon,\alpha,b)= \sum_{j=0}^{\infty} \epsilon^{\frac{3^j-1}{2}} G_{\alpha,b}^j (u_{\alpha})\,.
\end{equation}
Now Lemma~\ref{B1norm-f}  reads 
\begin{equation}\label{series-f}
f_{\alpha,b}= \delta T(\delta^2,\alpha,b).
\end{equation}
The advantage of \eqref{series-f} is that $f_{\alpha,b}$ is expressed in terms of the spectral quantity $ T(\delta^2,\alpha,b)$ and the value $\delta=\langle f_{\alpha,b},u_\alpha\rangle$. We will use \eqref{series-f} to write a non-trivial relation between the parameters $\alpha,b,\delta$ which will allow us to select the optimal $\alpha$ which minimizes the ground state energy $\mathfrak b(\alpha,b)$ (see \eqref{GLF-M:gse}).  Indeed, there exists a smooth function $n(z_1,z_2,z_3)$ defined in a neighborhood of $(0,\tau_0,\lambda_0)$ such that $n(0,\alpha,b)>0$ for $(\alpha,b)\not=(\tau_0,\lambda_0)$, and (see \cite[Lem.~14.2.9, Eq.~(14.46)]{FH-b})
\begin{equation}\label{eq:delta*}
\delta=\delta(\alpha,b)= \sqrt{b^{-1}(b-\lambda(\alpha))n(b-\lambda(\alpha),\alpha, b)}\,.
\end{equation}
So we can write $f_{\alpha,b}$ in the form (using \eqref{series-f})
\begin{equation}\label{seriesf-1}
f_{\alpha,b}=\delta T\big(\epsilon(\alpha,b) , \alpha,b\big)\,,
\end{equation}
with $\epsilon(\alpha,b)=\delta(\alpha,b)^2$. This proves that $f_{\alpha,b}$ depends smoothly on $(\alpha,b)$ near $(\tau_0,\lambda_0)$.

~

{\bf Step~2: Uniqueness of $\xi(b)$.}

By Theorem~\ref{GLF-M}, we know that a minimum $\alpha_0$ for the function $\alpha\mapsto\mathfrak b(\alpha,b)$ exists, and if $b$ is selected sufficiently close to $\lambda_0$, $\alpha_0$ is localized near $\tau_0$. In this case, it is enough to consider   $\alpha$ varying in a neighborhood of $\tau_0$. In particular, we may assume that \eqref{eq:delta*} holds.

We will prove that any minimum $\alpha_0$, when close enough to $\tau_0$, is unique and depends smoothly on $b$.  
Using \eqref{Fey-Hel} and \eqref{seriesf-1}, we have
\begin{multline}\label{zero-qty}
0=\int_{\R} \left(               \frac{t^2}{2}+\alpha_0\right)|f_{\alpha_0,b}(t)|^2 dt
= \delta(\alpha_0,b) ^2   \int_{\R}      \left(\frac{t^2}{2}+\alpha_0\right)              |T(\epsilon(\alpha_0,b), \alpha_0,b)|^2 dt.\\
 \end{multline}
By the Feyman-Hellman formula for the eigenvalue $\lambda(\alpha)$, we write
\begin{equation}\label{eq:lambda'(alpha)}
\begin{aligned}
\lambda^{\prime}(\alpha_0)&=2 \int_{\R}\left(               \frac{t^2}{2}+\alpha_0\right)       |u_{\alpha_0}(t)|^2= 2\int_{\R}\left(               \frac{t^2}{2}+\alpha_0\right)       |T(0,\alpha_0,b)|^2 dt\\
&=-2\int_{\R}\Big(               \frac{t^2}{2}+\alpha_0\Big)  \Big(    |T(\epsilon (\alpha_0,b), \alpha_0,b)|^2 - |T(0,\alpha_0,b)|^2 \Big)dt,
\end{aligned}
\end{equation}
where we have used \eqref{zero-qty} in the step.

By \eqref{series-f-2}, we see that
\[
\lambda^\prime(\alpha_0)=\delta (\alpha_0,b)^2  a(\alpha_0,b).
\]
where $a(\alpha,b )$ is a smooth function, thanks to \eqref{eq:delta*}.

Using the expression of $\delta (\alpha_0,b)$ in \eqref{eq:delta*}, we see that $\alpha_0$ is a solution of the following equation 
\[
\lambda^\prime(\alpha_0)= (b-\lambda(\alpha_0)) \widetilde{a}(\alpha_0,b)
\]
for a new smooth function $\widetilde{a}$.

Now, the function
\[
(\alpha, b)\mapsto h(\alpha, b):=  \lambda^{\prime}(\alpha)-(b-\lambda(\alpha)) \widetilde{a}(\alpha,b).
\]
satisfies $h(\tau_0,\lambda_0)=0$ since $\lambda'(\tau_0)=0$ and $\lambda(\tau_0)=\lambda_0$. Furthermore,
\[
\dfrac{\partial  h}{\partial \alpha}(\tau_0,\lambda_0)=\lambda^{\prime\prime}(\tau_0)>0.
\]
By the implicit function theorem, there exists a neighborhood $\mathcal N_0$ of $(\tau_0,\lambda_0)$ such that, in this neighborhood,  the equation $h(\alpha,b)=0$ has a unique solution  given by $\alpha=\xi (b)$, where $\xi$ is a smooth function of $b$. 

By selecting $b$  sufficiently close to $\lambda_0$, we get that  $(\alpha_0,b)\in\mathcal N_0$ and satisfies $h(\alpha_0,b)=0$. Consequently, $\alpha_0=\xi(b)$.
~

{\bf Step~3: Asymptotic behavior of the ground state energy.}

We will prove that, as $b\searrow\lambda_0$, 
\begin{equation}\label{eq:asymptot-en}
\|{f_{\xi(b),b}}\|_{4}^4=b^{-2} \dfrac{(b-\lambda_0)^2 }{\|u_{0}\|^4_4}(1+o(1))
\end{equation}
which in turn yields, by Theorem~\ref{GLF-M}, the desired asymptotic expansion for the ground state energy $\mathfrak b(\xi(b),b)$. Recall that, for the ease of the notation, we write $u_0=u_{\tau_0}$.

By the series representation \eqref{series} of $f_{\alpha ,b}$  in the $B^1$-norm (and therefore in the $L^4$-norm) we get
\[
\|{f_{\xi(b),b}}\|_{4}^4      = |\delta|^4 \|u_{\xi(b)}\|_4^4+\mathcal{O}(|\delta|^6).
\]
By smoothness of the function  $b\mapsto \xi(b)$ and $\alpha\mapsto u_\alpha$, we get $\|u_{\xi(b)}\|_4=\|u_{\tau_0}\|_4^4(1+o(1))$, which in turn yields \eqref{eq:asymptot-en}.
\end{proof}

\section{The spectral estimate}\label{Spec-est}
Let $b$ and $\xi(b)$ be as in Theorem~\ref{prop-min}, and let $\beta\in\R$. We introduce $\gamma(\beta,b)$ to be the infimum of the spectrum of the self-adjoint operator associated with the quadratic form 
\begin{equation}\label{Eq:Qbetab}
Q_{\beta, b}(u)= \int_{\R} \left( |u^{\prime}(t)|^2+\left(\frac{t^2}{2}+ \xi(b)+\beta\right)^2 |u(t)|^2-b (1-|f_{\xi(b),b}|^2)|u(t)|^2                                    \right)dt .
\end{equation}
More precisely, using  the min-max principle, 
\begin{equation}\label{def-ld-beta-b}
\gamma(\beta,b):= \inf_{u\in B^1(\R)}\dfrac{ Q_{\beta,b}(u)}{\int_{\R}|u|^2dt}.
\end{equation}
The eigenvalue $\gamma(\beta,b)$ is simple, and by analytic perturbation theory, $\beta\mapsto\gamma(\beta,b)$ is an analytic function. Furthermore, if $u_{\beta,b}$ is a normalized ground state of $\gamma(\beta,b)$, then it depends analytically on $\beta$ as well.

In the sequel, we write
\[
{\gamma}_{ \beta}(\beta,b):=\frac{\partial\gamma}{\partial\beta}(\beta,b)\quad{\rm and}\quad {\gamma}_{ \beta\beta}(\beta,b):=\frac{\partial^2\gamma}{\partial\beta^2}(\beta,b).
\]
Our objective is to prove

\begin{thm}\label{inf-lambda}
There exists $\epsilon>0$ such that for $b\in [\lambda_0, \lambda_0+\epsilon)$, we have 
\[
\inf_{\beta\in\R} \gamma(\beta, b)=0.   
\]
\end{thm}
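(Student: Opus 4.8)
The plan is to prove the two bounds $\inf_\beta\gamma(\beta,b)\le 0$ and $\inf_\beta\gamma(\beta,b)\ge 0$ separately. For the first, I would test the infimum at $\beta=0$ and identify the ground state explicitly. Writing $f=f_{\xi(b),b}$, the Euler--Lagrange equation \eqref{eqt-f} states exactly that $f$ lies in the kernel of the Schr\"odinger operator $L_0$ associated with $Q_{0,b}$, namely $-\frac{d^2}{dt^2}+\big(\frac{t^2}2+\xi(b)\big)^2-b(1-|f|^2)$. Because the potential tends to $+\infty$ as $|t|\to\infty$, $L_0$ has compact resolvent; since $f>0$ by Theorem~\ref{GLF-M} and the ground state is the only eigenfunction of constant sign, $f$ must be the ground state and the lowest eigenvalue is $0$. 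Hence $\gamma(0,b)=0$, so in particular $\inf_\beta\gamma(\beta,b)\le0$.

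To anchor the reverse inequality I would record that $\beta=0$ is a critical point sitting at height $0$. By the Feynman--Hellmann formula applied to \eqref{Eq:Qbetab}, the normalized ground state $u_{\beta,b}$ satisfies $\gamma_\beta(\beta,b)=2\int_\R\big(\frac{t^2}2+\xi(b)+\beta\big)|u_{\beta,b}|^2\,dt$. At $\beta=0$ the ground state is $u_{0,b}=f/\|f\|_2$, so \eqref{Fey-Hel} (with $\alpha_0=\xi(b)$) forces $\gamma_\beta(0,b)=0$.

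For the lower bound $\gamma(\beta,b)\ge0$ I would split the $\beta$-line. For $|\beta|\ge\delta$, dropping the nonnegative term $b|f|^2|u|^2$ from $Q_{\beta,b}$ and using the min-max definition of $\lambda$ gives $\gamma(\beta,b)\ge\lambda(\xi(b)+\beta)-b$; since $\xi(b)\to\tau_0$ as $b\searrow\lambda_0$ (Theorem~\ref{prop-min}), while $\tau_0$ is the unique minimizer of $\lambda$ (Theorem~\ref{thm:M-op}(1)) and $\lambda\to+\infty$ at $\pm\infty$ (Theorem~\ref{thm:M-op}(3)), there is $m(\delta)>0$ with $\lambda(\xi(b)+\beta)\ge\lambda_0+m(\delta)$ for $|\beta|\ge\delta$ and $b$ near $\lambda_0$, whence $\gamma(\beta,b)\ge m(\delta)-(b-\lambda_0)>0$. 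On $|\beta|\le\delta$ I would argue by convexity: by Lemma~\ref{lem:norm-f}, $\|f\|_{B^1(\R)}\to0$, so $b|f|^2\to0$ in $L^\infty$, and together with $\xi(b)\to\tau_0$ the operators attached to $Q_{\beta,b}$ converge, in norm-resolvent sense and uniformly for $|\beta|\le\delta$, to $P(\tau_0+\beta)-\lambda_0$. The ground state stays simple and isolated by Lemma~\ref{sec-eig}, so $\gamma$ and its $\beta$-derivatives converge uniformly on $[-\delta,\delta]$; in particular $\gamma_{\beta\beta}(\beta,b)\to\lambda''(\tau_0+\beta)$. Shrinking $\delta$ so that $\lambda''>0$ on $[\tau_0-\delta,\tau_0+\delta]$ (possible since $\lambda''(\tau_0)>0$ by Theorem~\ref{thm:M-op}(4)) gives $\gamma_{\beta\beta}(\cdot,b)>0$ on $[-\delta,\delta]$ for $b$ close to $\lambda_0$. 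Convexity together with $\gamma(0,b)=\gamma_\beta(0,b)=0$ then yields $\gamma(\beta,b)\ge0$ for $|\beta|\le\delta$. Combining the two regimes, $\inf_\beta\gamma(\beta,b)=\gamma(0,b)=0$.

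The main obstacle is the window $|\beta|\le\delta$: there the crude Montgomery bound $\lambda(\xi(b)+\beta)-b$ is negative near $\beta=0$, so a genuine second-order analysis cannot be avoided and one cannot hope for a single global lower bound. The delicate point is to promote the norm-resolvent convergence as $b\searrow\lambda_0$ into uniform $C^2$-convergence in $\beta$ of the simple ground-state eigenvalue; this relies on the uniform spectral gap of Lemma~\ref{sec-eig} and on the fact that the $\beta$-dependence of $Q_{\beta,b}$ is only the polynomial perturbation $2\beta\big(\frac{t^2}2+\xi(b)\big)+\beta^2$, so that $\gamma_\beta$ and $\gamma_{\beta\beta}$ are given by second-order perturbation formulas involving a regularized resolvent whose uniform bounds follow as in Lemma~\ref{Continuity-Res}.
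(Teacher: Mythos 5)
Your proof is correct, and its skeleton is the same as the paper's: the identity $\gamma(0,b)=0$ with $u_{0,b}=f_{\xi(b),b}/\|f_{\xi(b),b}\|_2$, the vanishing $\gamma_\beta(0,b)=0$ via Feynman--Hellmann and \eqref{Fey-Hel}, the crude bound $\gamma(\beta,b)\ge\lambda(\xi(b)+\beta)-b$ away from $\beta=0$, and positivity of the second derivative near $\beta=0$ coming from $\gamma_{\beta\beta}(\cdot,b)\to\lambda''(\tau_0+\cdot)$ together with $\lambda''(\tau_0)>0$. The one place you genuinely deviate is the mechanism behind this last convergence. The paper obtains it with no resolvent analysis at all: by the min-max principle it sandwiches
\[
\lambda(\xi(b)+\beta)-b\;\le\;\gamma(\beta,b)\;\le\;\lambda(\xi(b)+\beta)-b+b\|f_{\xi(b),b}\|_{\infty}^2\,,
\]
uses $\|f_{\xi(b),b}\|_{L^\infty(\R)}\to0$ (Lemma~\ref{lem:norm-f} plus the embedding $B^1(\R)\hookrightarrow L^\infty(\R)$) to get locally uniform convergence $\gamma(\cdot,b)\to\lambda(\tau_0+\cdot)-\lambda_0$, and then invokes analyticity of $\gamma$ in $\beta$ so that the derivatives converge as well; the positivity on $|\beta|\le\beta_0$ then follows from a Taylor expansion, which is exactly your convexity step in different words. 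You instead run real-variable perturbation theory: norm-resolvent convergence uniform in $\beta$, the spectral gap of Lemma~\ref{sec-eig}, and first/second-order perturbation formulas controlled by resolvent bounds as in Lemma~\ref{Continuity-Res}. Both routes are sound. The paper's sandwich is shorter and bypasses all uniform resolvent estimates; your route is heavier but more explicit, and it sidesteps the tacit point in the paper's argument that upgrading uniform convergence on a real interval to convergence of derivatives requires holomorphy with uniform bounds on a complex neighborhood. Two small remarks: your identification of $f_{\xi(b),b}$ as a positive zero mode (hence ground state) of the $\beta=0$ operator needs $b>\lambda_0$ --- at the endpoint $b=\lambda_0$ the minimizer is trivial, but then $\gamma(\beta,\lambda_0)=\lambda(\tau_0+\beta)-\lambda_0\ge0$ directly; and in the outer region one should shrink $|\xi(b)-\tau_0|$ below $\delta/2$ (say) so that $|\xi(b)+\beta-\tau_0|$ stays bounded away from zero when $|\beta|\ge\delta$, a point your $m(\delta)$ implicitly uses.
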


Theorem~\ref{inf-lambda} has been proved in \cite[Lem~2.2]{Al.He.} for the potential term $(t+\xi)^2$ (instead of $(t^2/2+\xi)^2$ in the expression of $Q_{\alpha,\beta}$). The proof of \cite{Al.He.} can be easily adapted to handle our case where the potential term is $(t^2/2+\xi)^2$. We start by giving  some properties of ${\gamma}(\beta,b)$ when $\beta=0$. 
  
\begin{proposition}\label{pprties-ld-beta-b}
We have:
\begin{enumerate}
\item \label{ld-at-0} $\gamma(0, b)=0$ and ${ \gamma}_{ \beta}(0,b)=  0$, for all $b>\lambda_0$. 
\item \label{second-derivative} 
\begin{equation}
\lim_{b\searrow \lambda_0}\gamma_{\beta\beta}(0,b)=\lambda^{\prime\prime}(\tau_0).
\end{equation}
\end{enumerate}
\end{proposition}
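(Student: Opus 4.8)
The plan is to establish the two items of Proposition~\ref{pprties-ld-beta-b} by exploiting the Euler-Lagrange equation~\eqref{eqt-f} satisfied by $f_{\xi(b),b}$, which serves as an explicit ground state for the operator defining $\gamma(0,b)$. For item~\eqref{ld-at-0}, I would first observe that when $\beta=0$ the potential in~\eqref{Eq:Qbetab} is $\left(\frac{t^2}2+\xi(b)\right)^2-b\bigl(1-|f_{\xi(b),b}|^2\bigr)$, and rewriting~\eqref{eqt-f} as
\[
-f_{\xi(b),b}''+\left(\frac{t^2}2+\xi(b)\right)^2 f_{\xi(b),b}-b\bigl(1-|f_{\xi(b),b}|^2\bigr)f_{\xi(b),b}=0
\]
shows that $f_{\xi(b),b}$ is an eigenfunction of the operator associated with $Q_{0,b}$ with eigenvalue $0$. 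Since $f_{\xi(b),b}$ is strictly positive by Theorem~\ref{GLF-M}(1), it must be the ground state (a positive function cannot be orthogonal to the positive ground state), so $\gamma(0,b)=0$ and $u_{0,b}$ is proportional to $f_{\xi(b),b}$.

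For the vanishing of the first derivative, I would use the Feynman-Hellmann formula: differentiating $\gamma(\beta,b)$ in $\beta$ and using that $u_{\beta,b}$ is normalized gives
\[
\gamma_\beta(0,b)=2\int_\R\left(\frac{t^2}2+\xi(b)\right)|u_{0,b}(t)|^2\,dt,
\]
since $\partial_\beta\bigl(\frac{t^2}2+\xi(b)+\beta\bigr)^2\big|_{\beta=0}=2\bigl(\frac{t^2}2+\xi(b)\bigr)$ and the $b$-dependent potential term carries no $\beta$-dependence. Because $u_{0,b}$ is proportional to $f_{\xi(b),b}$, this integral is exactly the quantity appearing in the Feynman-Hellmann identity~\eqref{Fey-Hel} of Theorem~\ref{GLF-M}(6) (with $\alpha_0=\xi(b)$), which vanishes. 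Hence $\gamma_\beta(0,b)=0$.

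For item~\eqref{second-derivative}, the natural strategy is a continuity-in-$b$ argument: as $b\searrow\lambda_0$ we have $\xi(b)\to\tau_0$ (Theorem~\ref{prop-min}) and $f_{\xi(b),b}\to0$ in $B^1(\R)$ (Lemma~\ref{lem:norm-f} gives $\|f_{\xi(b),b}\|_{B^1}\le Cb^{3/2}\sqrt{b-\lambda_0}\to0$). In this limit the quadratic form $Q_{\beta,b}$ degenerates to $Q_{\beta+\tau_0}$, the Montgomery form~\eqref{Qalpha} shifted by $\tau_0$, whose lowest eigenvalue is $\lambda(\tau_0+\beta)$, evaluated at $\beta=0$ gives $\lambda(\tau_0)=\lambda_0=\gamma(0,b)+\lambda_0$. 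The second derivative $\gamma_{\beta\beta}(0,b)$ should therefore converge to $\partial_\beta^2\lambda(\tau_0+\beta)\big|_{\beta=0}=\lambda''(\tau_0)$. To make this rigorous I would write the standard second-order perturbation formula for $\gamma_{\beta\beta}(0,b)$ in terms of the regularized resolvent and the ground state $u_{0,b}\propto f_{\xi(b),b}$, namely a diagonal term involving $\int 2|u_{0,b}|^2\,dt$ minus an off-diagonal resolvent term, and compare it termwise with the analogous expansion for $\lambda''(\tau_0)$ coming from the operator $P(\alpha)$ at $\alpha=\tau_0$.

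The main obstacle is the last step: controlling the limit of the second-derivative formula uniformly as $b\searrow\lambda_0$. The difficulty is that the ground state $u_{0,b}$ and its resolvent contributions depend on $b$ both through $\xi(b)$ and through the nonlinear potential $b|f_{\xi(b),b}|^2$, so I must show that the resolvent $R_{\xi(b),b}$ and the projected quantities converge to their $P(\tau_0)$ counterparts. Here I would lean on Lemma~\ref{Continuity-Res} for uniform resolvent bounds on the neighborhood $\mathcal N_0$ and on the smoothness established in Theorem~\ref{prop-min}, together with the fact that the perturbing potential $b|f_{\xi(b),b}|^2$ tends to zero in a norm strong enough to control the form, so that the full operator converges in norm-resolvent sense to $P(\tau_0)$ and the analytic eigenvalue derivatives converge accordingly.
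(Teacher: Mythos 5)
Your item (1) is essentially the paper's own argument: the paper multiplies the eigenvalue equation \eqref{Eq:ubetab} at $\beta=0$ by $f_{\xi(b),b}$ and integrates to conclude $\gamma(0,b)=0$ and $u_{0,b}=f_{\xi(b),b}/\|f_{\xi(b),b}\|_2$, which is just a formalization of your observation that a positive zero-mode must be the ground state; the first-derivative claim is then Feynman--Hellmann plus \eqref{Fey-Hel} with $\alpha_0=\xi(b)$, exactly as you do. For item (2), however, you take a genuinely different route. The paper never writes a second-order perturbation formula: it observes that $Q_{\beta,b}(u)=Q_{\xi(b)+\beta}(u)-b\int_{\R}(1-|f_{\xi(b),b}|^2)|u|^2\,dt$, sandwiches via min-max
\[
\lambda(\xi(b)+\beta)-b\;\leq\;\gamma(\beta,b)\;\leq\;\lambda(\xi(b)+\beta)-b+\|f_{\xi(b),b}\|_\infty^2\,,
\]
deduces from $\xi(b)\to\tau_0$ and $\|f_{\xi(b),b}\|_\infty\to0$ that $\gamma(\cdot,b)\to\lambda(\tau_0+\cdot)-\lambda_0$ locally uniformly in $\beta$, and then invokes analyticity of $\gamma$ in $\beta$ so that second derivatives converge as well; evaluating at $\beta=0$ gives the result with no resolvent analysis at all. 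Your plan --- Kato's second-order formula for $\gamma_{\beta\beta}(0,b)$ compared termwise with that for $\lambda''(\tau_0)$, using norm-resolvent convergence --- can be made to work, but it is heavier and carries a subtlety you only partially address: the perturbation coming from replacing $\tau_0$ by $\xi(b)$ in the quartic potential is $(\xi(b)-\tau_0)\big(t^2+\xi(b)+\tau_0\big)$, which is \emph{unbounded} in $t$, so sup-norm smallness (which does handle the $b|f_{\xi(b),b}|^2$ term) is not enough; you need that this difference is relatively (form-)bounded with respect to $P(\tau_0)$ with relative bound $\mathcal O(|\xi(b)-\tau_0|)$, and you also need the spectral gap of Lemma~\ref{sec-eig} to pass from resolvent convergence to convergence of the reduced resolvent and spectral projection. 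With those two points supplied, your argument closes, and it would even yield quantitative rates that the paper's soft argument does not; the paper's approach buys brevity and avoids perturbation formulas entirely.
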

\begin{proof}
Let $u_{\beta,b}$ denote the {\bf unique} positive normalized ground state of $\gamma(\beta,b)$. The function $u_{\beta,b}$ satisfies the eigenvalue equation 
\begin{equation}\label{Eq:ubetab}
-u_{\beta,b}^{\prime\prime}+ \left(\frac{t^2}{2}+ \xi(b)+\beta\right)^2 u_{\beta,b}-b (1-|f_{\xi(b),b}|^2)u_{\beta,b}= \gamma(\beta,b)u_{\beta,b}.
\end{equation}
We set $\beta=0$ and multiply the above equation by $f_{\xi(b),b}$, then we integrate over $\R$ to get
\[
\gamma(0,b)\int_{\R} f_{\xi(b),b}(t) u_{\beta,b}(t)dt=0\,.
\]
Since $f_{\xi(b),b}$ and $u_{\beta,b}$ are positive,  $\displaystyle\int_{\R} f_{\xi(b),b}(t) u_{\beta,b} (t)dt\neq 0$. Thus $\gamma(0,b)=0$ and it follows from \eqref{eqt-f} that
\[
 u_{0,b}=\frac{f_{\xi(b),b}}{\|f_{\xi(b),b}\|_2}.
\]
To prove the statement on the derivative of $\gamma$,  we write the Hellmann-Feynman formula
\[
\dfrac{\partial \gamma}{\partial \beta}(\beta,b)=2\int_{\R}\left(\frac{t^2}{2}+ \xi(b)+\beta\right)|u_{\beta,b}(t)|^2dt.
\]
For $\beta=0$, $u_{\beta,b}=f_{\xi(b),b}/\|f_{\xi(b),b}\|_2$ and  we obtain
\[
\dfrac{\partial \gamma}{\partial \beta}(0,b)=\dfrac{2}{\|f_{\xi(b),b}\|^2_2}\int_{\R}\left(\frac{t^2}{2}+ \xi(b)\right)|f_{\xi(b),b}|^2dt=0~\rm by ~\eqref{Fey-Hel}. 
\]
It remains to prove \eqref{second-derivative}. Note that $z_1(b), z_2(b)\rightarrow \tau_0$ as $b\rightarrow \lambda_0$. Since 
$
z_1(b)< \xi(b)< z_2(b),
$
\begin{equation}\label{xib-tau0}
\xi(b)\rightarrow \tau_0\quad {\rm as}~b\rightarrow \lambda_0\,.
\end{equation}
It follows from Corollary \ref{B1norm-f} that
\[
\lim_{b\rightarrow \lambda_0}{\|f_{\xi(b),b}\|_{B^1(\R)}}=0\,.
\]
By the continuous embedding $B^1(\R)\hookrightarrow L^{\infty}(\R)$, we infer that
\begin{equation}\label{norm-infty}
\lim_{b\rightarrow \lambda_0}{\|f_{\xi(b),b}\|_{L^{\infty}(\R)}}=0\,.
\end{equation}
Note that, for all $u\in B^1(\R)$, 
\[
Q_{\beta,b}(u)= Q_{\xi(b)+\beta}(u)- b \int_{\R}(1-|f_{\xi(b),b}|^2)|u|^2 dt,
\]
where $(\alpha,a) \mapsto Q_{\alpha,a}(\cdot)$ is the quadratic form defined in \eqref{Eq:Qbetab}, and $\alpha\mapsto Q_\alpha(\cdot)$ is the quadratic form introduced  in \eqref{Qalpha}.

Recall the definitions of $\gamma$ and $\lambda$ from \eqref{def-ld-beta-b} and \eqref{Eq:def-lambda-alpha} respectively. Using the min-max principle 
 we get
$$ \lambda( \xi(b)+\beta)-b\leq \gamma(\beta,b)\leq \lambda(\xi(b)+\beta)-b+\|f_{\xi(b),b}\|_\infty^2\,.$$
 It follows from  \eqref{xib-tau0}  and \eqref{norm-infty} that
\begin{equation}
\gamma(\beta,b)\underset{b\rightarrow \lambda_0}{\longrightarrow} \lambda(\tau_0+\beta)-\lambda_0,
\end{equation}
 where the convergence is uniform  (with respect to $\beta$) on every bounded interval in $\R$.

Since $\gamma$ is holomorphic in $\beta$, the derivatives must converge uniformly as well, hence 
\[
\gamma_{\beta\beta}(\beta,b)\rightarrow \lambda^{\prime\prime}(\tau_0+\beta),
\]
from which \eqref{second-derivative} follows simply upon taking $\beta=0$.
\end{proof}

\begin{proof}[Proof of Theorem~\ref{inf-lambda}]

Using a Taylor expansion of $\gamma(\beta,b) $ near $\beta=0$, it follows from Proposition~\ref{pprties-ld-beta-b} that there exist $\beta_0>0$ and $\epsilon_1>0$ such that
\begin{equation}\label{belowbeta0}
\lambda_0\leq b <\lambda_0+\epsilon_1~\&~|\beta|\leq \beta_0 \Rightarrow \gamma(\beta,b)> 0\,. 
\end{equation}
From the definition of $\gamma $ in \eqref{def-ld-beta-b} and the min-max principle, we get
\begin{equation}
\gamma(\beta,b)\geq \lambda\big(\xi(b)+ \beta\big)-b\,. 
\end{equation}
Since $\lambda''(\tau_0)>0$, we get by Taylor's formula the existence of $\epsilon_2\in(0,\epsilon_1)$ and $\delta\in(0,\frac{\beta_0}2)$ such that
$$z\not\in(\tau_0-\delta,\tau_0+\delta)\implies \lambda(z)\geq \lambda_0+\epsilon_2\,.$$
Since $\xi(b)\to\tau_0$ as $b\to\lambda_0$, there exists $\epsilon_3\in(0,\epsilon_2)$  such that
$$\lambda_0\leq b\leq \lambda_0+\epsilon_3\implies |\xi(b)-\tau_0|\leq \frac{\beta_0}2\,.$$
It is easy to see that, for $b\in[\lambda_0,\lambda_0+\epsilon_3]$ and $|\beta|\geq \frac{\beta_0}2$,  $\xi(b)+\beta\notin (\tau_0-\delta,\tau_0+\delta)$, and consequently
\[
 \gamma(\beta,b)\geq \lambda\big(\xi(b)+ \beta\big)-b\geq \lambda_0+\epsilon_2-b\geq 0.
 \]
This combined with \eqref{belowbeta0} finishes the proof of Theorem~\ref{inf-lambda}.
\end{proof}

\section{The model on  a half cylinder}\label{models-HC}
Recall that $S_{R}=(-R,R)\times \R$ and  ${{\bf A}_{\rm app}}$ is the magnetic potential introduced in \eqref{Aapp}. We introduce the space 
\begin{equation}\label{eq:D-per}
\mathcal{D}^{\rm per}= \Big\{   u\in L^2_{\rm loc}(\R^2)~:~ (\nabla -i{\bf A}_{\rm app})u \in L^2(S_R),~\exists\, z \in \R,~\quad u(x_1+2R, x_2)= 
e^{2 iz R}u(x_1,x_2) \Big\}\,,
\end{equation}
and  the ground state energy,
\begin{equation}\label{ebr-per}
\mathfrak{e}^{\rm per}(b;R)= {\rm inf}\left\{ \mathcal{E}_{R,b}(u)  ~:~  u\in \mathcal{D}^{\rm per}\right\}\,,
\end{equation}
where $\mathcal E_{R,b}$ is the functional in \eqref{E:Rb}.

For every $b>0$, let  $\xi(b)$ be as defined in Theorem~\ref{prop-min} and define the function
\begin{equation}\label{eq:psi-b}
\R^2\ni (x_1,x_2)\mapsto \psi_{b}(x_1,x_2)= e^{i\xi(b)x_1}f_{\xi(b),b}(x_2).
\end{equation}
We will prove 

\begin{thm}\label{lb-e-per}
There exists $\epsilon>0$ such that, for all $b \in [\lambda_0,\lambda_0+\epsilon)$ and $\psi\in \mathcal{D}^{\rm per}$,
\begin{equation}\label{lb-energy}
\mathcal{E}_{R,b} (\psi)\geq\mathcal{E}_{R,b} (\psi_b) .
\end{equation}
\end{thm}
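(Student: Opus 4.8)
The plan is to exploit the \(2R\)-quasiperiodicity built into \(\mathcal{D}^{\rm per}\) by a Floquet--Fourier expansion in the \(x_1\)-variable, which diagonalises the \emph{quadratic} part of \(\mathcal{E}_{R,b}\) into a sum of decoupled one-dimensional forms of the type \(Q_\alpha\) in \eqref{Qalpha}, and then to control the quartic term by completing a square. We may assume \(\mathcal{E}_{R,b}(\psi)<\infty\) (otherwise \eqref{lb-energy} is trivial), so that \(\psi\in\mathcal{D}^{\rm per}\) with some phase \(z\in\R\) admits the expansion \(\psi(x_1,x_2)=\sum_{n\in\mathbb{Z}}e^{i\alpha_n x_1}v_n(x_2)\), where \(\alpha_n=z+\pi n/R\) and \(v_n\in B^1(\R)\). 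Since \(\{e^{i\alpha_n x_1}\}_n\) is orthogonal on \((-R,R)\) and \(\Ab_{\rm app}\) depends on \(x_2\) only, each exponential is an eigenmode of the \(x_1\)-kinetic operator, so that
\[
\int_{S_{R}}\Big(|(\nabla-i\Ab_{\rm app})\psi|^2-b|\psi|^2\Big)\,dx=2R\sum_{n}\Big(Q_{\alpha_n}(v_n)-b\norm{v_n}_2^2\Big).
\]
One also observes that \(\psi_b\) is exactly the single-mode function with \(\alpha=\xi(b)\) and profile \(f_{\xi(b),b}\); hence, by \eqref{inf-formula}, \(\mathcal{E}_{R,b}(\psi_b)=2R\,\mathfrak{b}(\xi(b),b)=-bR\,\norm{f_{\xi(b),b}}_4^4\), which is the quantity we must reach from below.

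The central device is the spectral estimate of Theorem~\ref{inf-lambda}. Writing \(g:=f_{\xi(b),b}\) and \(\beta_n:=\alpha_n-\xi(b)\), I would rewrite each summand by transferring the weight \(bg^2\) out of the quadratic part,
\[
Q_{\alpha_n}(v_n)-b\norm{v_n}_2^2=Q_{\beta_n,b}(v_n)-b\int_{\R}g^2|v_n|^2\,dt,
\]
where \(Q_{\beta,b}\) is the form in \eqref{Eq:Qbetab} (this is precisely why that form carries the weight \(b(1-g^2)\) and not \(b\)). Since \(\inf_{\beta}\gamma(\beta,b)=0\) gives \(Q_{\beta_n,b}(v_n)\geq\gamma(\beta_n,b)\norm{v_n}_2^2\geq 0\) for every mode, summing over \(n\) and applying Parseval in the form \(\int_{S_R}g^2|\psi|^2\,dx=2R\sum_n\int_\R g^2|v_n|^2\,dt\) yields
\[
\int_{S_{R}}\Big(|(\nabla-i\Ab_{\rm app})\psi|^2-b|\psi|^2\Big)\,dx\geq -b\int_{S_R}g(x_2)^2\,|\psi|^2\,dx.
\]
Finally I would add back the quartic term and complete the square pointwise, using \(\tfrac b2|\psi|^4-bg^2|\psi|^2=\tfrac b2(|\psi|^2-g^2)^2-\tfrac b2 g^4\), to conclude
\[
\mathcal{E}_{R,b}(\psi)\geq \frac b2\int_{S_R}\big(|\psi|^2-g^2\big)^2\,dx-\frac b2\int_{S_R}g^4\,dx\geq -bR\,\norm{g}_4^4=\mathcal{E}_{R,b}(\psi_b),
\]
which is exactly \eqref{lb-energy}. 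The admissible range of \(b\) is inherited from Theorem~\ref{prop-min} (existence of \(\xi(b)\) and \(f_{\xi(b),b}\)) and Theorem~\ref{inf-lambda} (nonnegativity of \(\gamma(\cdot,b)\)), i.e.\ \(b\in[\lambda_0,\lambda_0+\epsilon)\).

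The main obstacle is the quartic term \(\tfrac b2\int_{S_R}|\psi|^4\), which does \emph{not} decouple under the Fourier expansion and therefore cannot be treated mode by mode. The resolution above is not to decouple it at all: one moves exactly the amount \(bg^2|\psi|^2\) into the quadratic blocks so that the genuinely positive linearized forms \(Q_{\beta_n,b}\) absorb all mode interactions, and only afterward completes the square on the full two-dimensional quartic term. Two points deserve care in the write-up: first, one needs \(\gamma(\beta,b)\ge 0\) for \emph{every} real \(\beta\) (the \(\beta_n=z-\xi(b)+\pi n/R\) sweep out all of \(\R\) as \(z\) and \(n\) vary), which is precisely the content of \(\inf_\beta\gamma(\beta,b)=0\); second, one should note that \(\psi_b\in\mathcal{D}^{\rm per}\) with phase \(z=\xi(b)\), so that the comparison in \eqref{lb-energy} is between elements of the same admissible class and the bound is in fact saturated by \(\psi_b\).
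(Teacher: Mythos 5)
Your argument is correct, and while it rests on the same two pillars as the paper's proof --- the spectral estimate $\inf_\beta\gamma(\beta,b)=0$ of Theorem~\ref{inf-lambda} and the nonnegativity of a quartic square --- the way you deploy them is genuinely different and substantially shorter. The paper first reduces to the dense class $\mathcal D_0$ of \eqref{eq:D0-newspace}, writes $\psi=e^{izx_1}f_{\xi(b),b}\,v$ (which uses the strict positivity of $f_{\xi(b),b}$), converts the energy difference into the identity \eqref{eq:combining} by means of the Euler--Lagrange equation \eqref{eqt-f} and integrations by parts, and only then expands the factor $v$ in Fourier modes; since that expansion requires $v$ to be $2R$-periodic, the case $z\neq\xi(b)$ must be recovered in two further steps (rational values of $\tfrac{R}{\pi}(z-\xi(b))$ via the rescaled period $2sR$ in \eqref{Eq:Asp-on-R}, then density of the rationals). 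You instead expand $\psi$ itself in the Floquet modes $e^{i(z+n\pi/R)x_1}$, which is legitimate for every $z$, and you exploit the fact that Theorem~\ref{inf-lambda} gives $\gamma(\beta,b)\geq 0$ for \emph{all} real $\beta$, not merely $\beta\in\tfrac{\pi}{R}\mathbb{Z}$; this makes the paper's Steps~2 and~3, as well as the reduction to $\mathcal D_0$, unnecessary. Writing $g=f_{\xi(b),b}$ as you do, your weight transfer $Q_{\alpha_n}(v_n)-b\norm{v_n}_2^2=Q_{\beta_n,b}(v_n)-b\int_\R g^2|v_n|^2\,dt$ is the purely algebraic substitute for the paper's substitution $w_n=f_{\xi(b),b}v_n$ (which needs the equation for $f_{\xi(b),b}$ and an integration by parts), and your completion of the square $\tfrac b2|\psi|^4-bg^2|\psi|^2=\tfrac b2(|\psi|^2-g^2)^2-\tfrac b2 g^4$ is exactly the coordinate-free form of the paper's term $\tfrac b2\int f_{\xi(b),b}^4(1-|v|^2)^2$, since $|\psi|^2=g^2|v|^2$ under the paper's representation. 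In the final write-up you should record two routine checks: each $v_n$ lies in $B^1(\R)$ (immediate, since finiteness of the total kinetic energy makes each mode's quadratic form finite), so that the bound $Q_{\beta_n,b}(v_n)\geq\gamma(\beta_n,b)\norm{v_n}_2^2$ applies, and the mode-by-mode identities are Parseval/Tonelli statements for nonnegative integrands, hence valid without smoothness or compact support in $x_2$ --- which is precisely why you can work on all of $\mathcal D^{\rm per}$ directly. What your route buys is the elimination of the division by $f_{\xi(b),b}$, of both density arguments, and of the rational-approximation step; what the paper's formulation buys is the explicit identity \eqref{eq:combining}, which exhibits the energy excess as a sum of manifestly nonnegative terms and thereby also displays when equality holds, but that extra information is not needed for the inequality \eqref{lb-energy} itself.
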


\begin{rem}
It is easy to see that
\[
\psi_b(x_1+2R,x_2)= e^{2i\xi(b)R}e^{i\xi(b)x_1}f_{\xi(b),b}(x_2)=e^{2i\xi(b)R}\psi_b(x_1,x_2).
\]
Thus $\psi_b\in \mathcal{D}^{\rm per}$ (take $z=\xi(b)$). Consequently, we infer from Theorem~\ref{lb-e-per} that $\psi_b$ is the minimizer of $\mathcal{E}_{R,b}$ in $\mathcal{D}^{\rm per}$.
By \eqref{inf-formula} and invoking Theorem \ref{prop-min}, the minimal energy is:
\begin{equation}\label{min-energy}
\mathfrak e^{\rm per}(b;R)=\mathcal{E}_{R, b}(\psi_{b})= -{ bR}\|{f_{\xi(b),b}}\|_{4}^4= -{ b^{-1} R} \dfrac{(b-\lambda_0)^2 }{\|u_{0}\|_4^4}\big(1+g(b)\big),
\end{equation}
where $g(b)$ is independent of $R$ and satisfies $g(b)\to0$ as $b\searrow\lambda_0$.
\end{rem}

\begin{proof}[Proof of Theorem~\ref{lb-e-per}]
We follow the proof of Almog-Helffer \cite{Al.He.} devoted to the potential term $(t+\xi(b))^2$. 
Firstly, let us notice that the space
\begin{equation}\label{eq:D0-newspace}
\mathcal D_0=\{\psi\in\mathcal D^{\rm per}\cap C^\infty(\R^2)~:~\exists~M>0,~{\rm supp}\psi\subset \R\times [-M,M]\,\}\
\end{equation}
is dense in $\mathcal D_{\rm per}$, the space in \eqref{eq:D-per}, relative to the norm $\|u\|_{\mathcal D^{\rm per}}:=\|u\|_{L^2(S_R)}+\|(\nabla-i\Ab_{\rm app})u\|_{L^2(S_R)}$. So it is enough to prove \eqref{lb-energy} for $\psi\in\mathcal D_0$. The proof consists of four steps. Since $f_{\xi(b),b}>0$ in $\R_+$, we can represent the space $\mathcal D_0$ in the following useful form
\begin{multline}\label{eq:D0-newspace*}
\mathcal D_0=\{e^{izx_1}f_{\xi(b),b}(x_2)v(x_1,x_2)~:~z\in\R,~v\in C^\infty(\R^2)~{\rm is~}2R\text{-periodic in the variable }x_1\\
\&~\exists M>0,~{\rm supp}v\subset\R\times[-M,M] \}\,. 
\end{multline}

{\bf Step~1.}

Choose $b\in[\lambda_0,\lambda_0+\epsilon)$ so that Theorem~\ref{inf-lambda} holds.   Pick $\psi\in\mathcal D_0$ in the form (see \eqref{eq:D0-newspace*})
\begin{equation}\label{function1}
(x_1,x_2) \mapsto \psi (x_1,x_2) :=   e^{i\xi(b)x_1}  f_{\xi(b),b}(x_2)v\,,
\end{equation}
where $v(x_1,x_2)$ is smooth, vanishes for $|x_2|$ large enough, and periodic with respect to the first variable, i.e.
$v(x_1,x_2)= v(x_1+2R,x_2)$.

The following formula will allow us to  compare the energies of $\psi$ and $\psi_b$ (see \cite[Thm.~3.1, Eqs.~(3.5)-(3.7)]{Al.He.} for the detailed computations):
\begin{multline}\label{eq:combining}
\mathcal{E}_{R,b} (\psi)- \mathcal{E}_{R,b} (\psi_b)
=  \int_{\R} \int_{-R}^{R}\l\Bigg(f_{\xi(b),b}^2 |\nabla v|^2
          +2\left(\frac{ x_{2}^2}{2}+\xi(b)\right)f_{\xi(b),b}^2\Im(  v \partial_{x_1}v)\Bigg)dx_1 dx_2\\
     + \frac{b}{2}\int_{\R}\int_{-R}^{R}f_{\xi(b),b}^4(1-|v|^2)^2 dx_1dx_2.
\end{multline}
By periodicity we can expand $v$ in a  Fourier series as  follows
\[
v(x_1,x_2)= \sum_{n=-\infty}^{\infty}  v_n(x_2) e^{i n \frac{\pi}{R}x_1} 
\]
where
\begin{equation}\label{eq:revision-vn}
v_n(x_2)=\frac1{2R}\int_{-R}^R v(x_1,x_2)e^{-in\frac{\pi}{R}x_1}\,dx_1\,.\end{equation}
So, we can rewrite \[
\psi(x_1,x_2)= \displaystyle\sum_{n=-\infty}^{\infty} e^{i n \frac{\pi }{R}x_1}    e^{i \xi(b) x_1}   ( v_{n}f_{\xi(b),b})(x_2).
\]
Thus, the equation \eqref{eq:combining} reads as follows
\begin{equation}\label{delta-E}
\mathcal{E}_{R,b} (\psi)- \mathcal{E}_{R,b} (\psi_b)
 =  \sum_{n=-\infty}^{\infty} J(v_n; \frac{n\pi}{R})     + \frac{b}{2}\int_{\R}\int_{-R}^{R}f_{\xi(b),b}^4(1-|v|^2)^2 dx_1dx_2,
\end{equation}      
where
\[
J(v_n;\beta)=\int_{\R} |f_{\xi(b),b}|^2\left[ |v_n^\prime|^2+  \Big(\beta^2+2\beta\big(\frac{ x_{2}^2}{2}+\xi(b)\big)\Big)|v_n|^2                                \right] dx_2\,.
\]
It results from \eqref{eq:revision-vn} that $v_n(x_2)$ is a smooth function with compact support (since $v(x_1,x_2)$ is smooth and vanishes for $x_2$ large enough). Let $w_n(x_2)=f_{\xi(b),b}(x_2) v_n(x_2) $. It is easy to see that 
\[
\int_{\R} |f_{\xi(b),b}|^2 |v_n^\prime|^2  dx_2= \int_{\R}       \left[     -\left(\frac{w_n^2 f_{\xi(b),b}^\prime}{f_{\xi(b),b}}\right)^\prime  +  \frac{w_n^2 f_{\xi(b),b}^{\prime\prime}}{f_{\xi(b),b}}     +|w_n^\prime|^2        \right]dx_2\,,
\]
where, after an integration by parts,
\[
 \int_{\R}     \left(\frac{w_n^2 f_{\xi(b),b}^\prime}{f_{\xi(b),b}}\right)^\prime  dx_2=0  .
 \]
Consequently, using  the equation satisfied by $f_{\xi(b),b}$ in \eqref{eqt-f}, we get
$$
\int_{\R} |f_{\xi(b),b}|^2 |v_n^\prime|^2  dx_2= \int_{\R}       \left[   |w_n^\prime|^2   +\left(  \left(\frac{ x_{2}^2}{2}+\xi(b)\right)^2-b  (1- f_{\xi(b),b}^2)\right)
|w_n|^2     \right]dx_2.
$$
Now we insert this into the expression of $J(v_n;\beta)$ then use the min-max principle and get
\begin{align*}
J(v_n;\dfrac{n\pi}{R})&= \int_{\R}       \left[   |w_n^\prime|^2   +\Big(  \big( \frac{ x_{2}^2}{2}+\xi(b)+\frac{n\pi}{R}\big)^2-b  (1- f_{\xi(b),b}^2)                                                                    \Big  )
|w_n|^2     \right]dx_2\\
&\geq \gamma\left(\frac{n\pi}{R},b\right)\int_{\R}|w_n|^2 dx_2,
\end{align*}
where $\gamma(\cdot,b)$ was introduced in \eqref{def-ld-beta-b}. Note that $\gamma(\cdot,b)\geq 0$ by Theorem~\ref{inf-lambda}.
Inserting this into \eqref{delta-E}, we obtain
\begin{multline}\label{delta-E}
\mathcal{E}_{R,b} (\psi)- \mathcal{E}_{R,b} (\psi_b)\\
\geq  2R \gamma\left(\frac{n\pi}{R},b\right) \sum_{n=-\infty}^{\infty}  \int_{\R}|f_{\xi(b),b} (x_2)v_n(x_2)|^2 dx_2           + \frac{b}{2}\int_{\R}\int_{-R}^{R}f_{\xi(b),b}^4(1-|v|^2)^2 dx_1dx_2\geq 0.
\end{multline}

{\bf Step~2. }

Now we  consider an arbitrary function  $\psi\in\mathcal{D}_0$  which can be expressed in the form (see \eqref{eq:D0-newspace*})
\begin{equation}\label{function2}
\psi(x_1,x_2)=e^{iz x_1 } f_{\xi(b),b}(x_2) v(x_1,x_2)\,.
\end{equation}
Note that in \eqref{function1}, we handled  the special case $z=\xi(b)$. Here 
we assume that :
\begin{equation}\label{Eq:Asp-on-R}
\frac{R}{\pi}(z-\xi(b))= \frac{r}{s}, 
\end{equation}
for some $(r,s)\in \mathbb{Z}\times\mathbb{N}$.
We can rewrite $\psi$ as
\[
\psi(x_1,x_2)=e^{i \xi(b) x_1 } f_{\xi(b),b}(x_2) v^{\rm per}(x_1,x_2),
\]
where $v^{\rm per}(x_1,x_2):=e^{i(z-\xi(b))x_1} v(x_1,x_2) $. 

The function $v^{\rm per}$ is $2sR$-periodic with respect to the first variable. Thus 
 $\psi$ falls in the case studied in Step~1 but  with $R$ replaced by $sR$ and $s\in \mathbb{N}$. We apply the conclusion in Step~1 and write 
\[
\mathcal{E}_{sR,b}(\psi)\geq \mathcal{E}_{sR,b}(\psi_b).
\]
Next we observe that, for $s\in\mathbb{N}$, 
\[
\mathcal{E}_{sR,b}(\psi)= s \mathcal{E}_{R,b}(\psi) \quad {\rm and}\quad \mathcal{E}_{sR,b}(\psi_b)= s \mathcal{E}_{R,b}(\psi_b).
\]
So we deduce that
\[
\mathcal{E}_{R,b}(\psi)\geq \mathcal{E}_{R,b}(\psi_b),
\]
for all $\psi\in \mathcal{D}_0$ but under the condition in \eqref{Eq:Asp-on-R}.

{\bf Step~3.}

The general result follows from the density of rational numbers in $\R$. We present the details for the sake of convenience. Pick  $z\in\R$ and an arbitrary smooth function 
$\psi(\cdot;z)\in\mathcal D_0$ having the form (see \eqref{eq:D0-newspace*})
\[
\psi(x_1,x_2;z):= e^{iz x_1 } f_{\xi(b),b}(x_2) v(x_1,x_2)\,.\]
We will prove that 
\begin{equation}\label{eq:conc*}
\mathcal{E}_{R,b}(\psi(\cdot;z))\geq \mathcal{E}_{R,b}(\psi_{b}),
\end{equation}
which yields the desired result.

Define $\alpha\in\R$ as follows
\[
\frac{R}{\pi}(z-\xi(b))=\alpha\in \R.
\]
Let $\alpha_{n}=\dfrac{[n\alpha]}{n}\in \mathbb{Q}$, where $[\cdot]$ denotes the integer part. It is clear that $\alpha_n\rightarrow \alpha$ in $\R$. Define the sequence $z_n$ as follows
\[
\frac{R}{\pi}(z_n-\xi(b))=\alpha_n \in \mathbb{Q}.
\]
We apply the conclusion in Step 2 with $z_{n}$, it follows that
\begin{equation}\label{particular-case}
\mathcal{E}_{R,b}(\psi(\cdot;z_n))\geq \mathcal{E}_{R,b}(\psi_{b}).
\end{equation}
It is clear that $z_n\rightarrow z$. From this, we deduce that $\mathcal{E}_{R,b}(\psi(\cdot;z_n))\rightarrow \mathcal{E}_{R,b}(\psi(\cdot;z))$. Since $\mathcal{E}_{R,b}(\psi_{b})$ is independent of $z$, taking the limit in \eqref{particular-case} yields \eqref{eq:conc*}.
\end{proof}

\section{Proof of Theorem~\ref{main-theorem}}\label{Proof-MT}

Recall the ground state energies $\mathfrak{e}$ and $\mathfrak{e^{\rm per}}$ from \eqref{ebr} and \eqref{ebr-per} respectively. We decompose the proof of Theorem~\ref{main-theorem} into  two steps.

{\bf Step~1: Lower bound.}

Since every function in $H^1_0(S_R)$ can be extended by periodicity to a function in the domain $\mathcal{D}^{\rm per}$, we get immediately that, for all $L,R>0$, 
\begin{equation}\label{compare}
\mathfrak{e}(L^{-3/2};R)\geq \mathfrak{e}^{\rm per}(L^{-3/2};R).
\end{equation}
Now, Theorem~\ref{lb-e-per} and the formula in \eqref{min-energy} give us, for all $L,R>0$,  
\[
\frac{\mathfrak{e}^{\rm per}(L^{-3/2};R)}{2R}\geq  -\frac{ L^{2/3}}{2} \dfrac{(L^{-2/3}-\lambda_0)^2 }{\|u_{0}\|_4^4}\big(1+\mathfrak g(L)\big),
\]
where $\mathfrak g(L)$ is independent of $R$ and tends to $0$ as $L\nearrow\lambda_0^{-3/2}$. 
Thus \eqref{compare} yields 
\[
\frac{\mathfrak{e}(L^{-3/2};R)}{2R}\geq  -\frac{ L^{2/3}}{2} \dfrac{(L^{-2/3}-\lambda_0)^2 }{\|u_{0}\|_4^4}\big(1+\mathfrak g(L)\big)\,.
\]
In light of Theorem~\ref{Thm:EL}, we get the desired lower bound upon  taking ${R\rightarrow\infty}$.

{\bf Step~2: Upper bound.}

To get an upper bound, we need to use a suitable test configuration. Let $\theta_R\in C_{c}^\infty(\R)$ be a function satisfying,  
\[
{\rm supp}~\theta_R\subset (-R,R),\quad 0\leq \theta_R\leq 1,\quad \theta_R=1 \quad {\rm in}\quad (-R+1,R-1),
\]
and 
\[
|\theta'_R|\leq C\,,
\]
where $C>0$ is a universal constant.

 We introduce 
\[
\psi(x_1,x_2)= e^{i\xi(L^{-2/3})x_1}f_L(x_2)  \theta_{R}(x_1).
\]
where 
\[
f_{L}(x_2):=f_{\xi(L^{-2/3}),L^{-2/3}}(x_2).
\]
Here, we recall  $\xi(b)$ and $f_{\xi(b),b}$ from Theorems~\ref{prop-min} and \ref{GLF-M} respectively.

We start by estimating 
\begin{equation}
 \mathcal{E}_{R,L^{-2/3}}(\psi)= \int_{S_R}\left(|(\nabla -i {\bf A_{\rm app}})\psi|^2dx-L^{-2/3}|\psi|^2 dx+\frac{L^{-2/3}}{2}|\psi|^4\right)dx.
   \end{equation}
An integration by parts yields,
\begin{multline}\label{Eq:lb1}
\int_{S_R}|(\nabla -i {\bf A_{\rm app}})\psi|^2dx =  \left\langle     \theta_R^2(x_1) f_{L}(x_2), -(\nabla -i {\bf A_{\rm app}})^2 e^{i\xi(L^{-2/3})x_1} f_{L}(x_2)                                                                               \right\rangle          \\
   +\int_{S_R}|f_{L}(x_2)\theta^{\prime}_R(x_1)|^2 dx.                                    
  \end{multline}
  Note that
  \begin{equation}\label{Eq:lb2}
  \begin{aligned}
& \left\langle     \theta_R^2(x_1) f_{L}(x_2), -(\nabla -i {\bf A_{\rm app}})^2 e^{i\xi(L^{-2/3})x_1} f_{L}(x_2)                                                                               \right\rangle\\
&\qquad =  \int_{S_R} \theta_R^2(x_1)\left(|f_{L}^\prime(x_2)|^2 +\Big(\frac{x_2^2}{2}+\xi(L^{-2/3})\Big)^2 |f_{L}(x_2)|^2\right)dx_1 dx_2  \\
 &\qquad\leq  2R \int_{S_R}\left(|f_{L}^\prime(x_2)|^2 +\Big(\frac{x_2^2}{2}+\xi(L^{-2/3})\Big)^2 |f_{L}(x_2)|^2  \right)dx_2.
      \end{aligned}
      \end{equation}
By the construction of $\theta_R$, we have that ${\rm supp}~\theta^{\prime}_R\subset[-R+1,R-1]$ and $|\theta'_R|\leq C$. Thus 
\begin{equation}\label{Eq:lb3}
\int_{S_R}|f_{L}(x_2)\theta^{\prime}_R(x_1)|^2 dx_1dx_2  =\int_{-R+1}^{R-1}|\theta^{\prime}_R(x_1)|^2 dx_1\int_{\R}|f_{L}(x_2)|^2 dx_2\leq C\|f_L\|_2^2.
\end{equation}
Here $\|f_L\|_2<\infty$ but depends on $L$. Substituting  \eqref{Eq:lb2} and \eqref{Eq:lb3} in \eqref{Eq:lb1}, we find
\begin{equation}
\int_{S_R}|(\nabla -i {\bf A_{\rm app}})\psi|^2dx\leq    2R\left( \int_{\R}|f_{L}^\prime(x_2)|^2 +\Big(\frac{x_2^2}{2}+\xi(L^{-2/3})\Big)^2 |f_{L}(x_2)|^2\right)  dx_2+ C\|f_L\|_2^2\,.
\end{equation}
We have the following decomposition,
\begin{align*}
\int_{S_{R}}|\psi|^2dx&= \int_{S_{R}}\theta_R(x_1)^2 |f_{L}(x_2)|^2dx_1 dx_2\\
&=2R \int_{\R} |f_{L}(x_2)|^2dx_2-\int_{S_R}(1-\theta_R^2(x_1))|f_{L}(x_2)|^2dx_1 dx_2.
\end{align*}
Again, the assumption on the support of $\theta_R$ yields
\begin{equation}
\int_{S_R}(1-\theta_R^2(x_1))|f_{L}(x_2)|^2dx_1 dx_2\leq 2\|f_L\|_2^2\,.
\end{equation}
Consequently, we obtain, for all $R>2$,
\begin{equation}\label{18}
\begin{aligned}
\mathfrak{e}(L^{-2/3};R)&\leq \mathcal{E}_{R,L^{-2/3}}(\psi)\\
&\leq  2R \int_{\R}\left(   |f_{L}^\prime|^2 +\Big(\frac{x_2^2}{2}+\xi(L^{-2/3})\Big)^2 |f_{L}|^2  
 -L^{-2/3}    |f_{L}|^2  +\dfrac{L^{-2/3}}{2}    |f_{L}|^4 
\right)  dx_2\\
&\qquad+\max(C,2)\|f_L\|_2^2\,.
\end{aligned}
\end{equation}
Since $f_L$ is a minimizer of the functional \eqref{GLF-M*} for $(\alpha,b)=\big(\xi(L^{-2/3}),L^{-2/3}\big)$, \eqref{18} reads 
\begin{equation}
\mathfrak{e}(L^{-2/3};R)\leq  2R \,\mathfrak{b}\big(\xi(L^{-2/3}),L^{-2/3}\big)+\max(C,2)\|f_L\|_2^2\,, 
\end{equation}
where $\mathfrak{b}$ was introduced in \eqref{GLF-M:gse}.

Dividing by $2R$, we get 
\begin{equation}
\dfrac{\mathfrak{e}(L^{-2/3};R)}{2R}\leq  \mathfrak{b}\big(\xi(L^{-2/3}),L^{-2/3}\big)+\dfrac{\max(C,2)\|f_L\|_2^2}{R}\,.
\end{equation}
 Taking $\limsup_{R\rightarrow \infty}$ on both sides and invoking Theorem~\ref{Thm:EL}, we infer that, for all $L>0$,
\begin{equation}\label{1prime}
E(L)=\limsup_{R\rightarrow\infty}\dfrac{\mathfrak{e}(L^{-2/3};R)}{2R}\leq   \mathfrak{b}\big(\xi(L^{-2/3}),L^{-2/3}\big).
\end{equation}
In view of Theorem~\ref{prop-min}, we see that, as $L\nearrow \lambda_0^{-3/2}$,
\[
 \mathfrak{b}\Big(\xi(L^{-2/3}),L^{-2/3}\Big)=   - {\frac{L^{2/3}}{2}}\dfrac{(L^{-2/3}-\lambda_0)^2 }{\|u_{0}\|_4^4}\big(1+o(1)\big)\,.               
\]
Inserting this into \eqref{1prime}, we get, as $L\nearrow \lambda_0^{-3/2}$,
\[
E(L)\leq  - \frac{L^{2/3}}{2} \dfrac{(L^{-2/3}-\lambda_0)^2 }{\|u_{0}\|_4^4}(1+o(1))\,.
\]

\section{Proof of Theorem~\ref{corol:KN1}}\label{sec:proof-KN1}

We will improve the estimate in \eqref{eq:ae-HK} by providing an explicit control of the remainder term. We will do this by  carefully examining the upper and lower bounds obtained in \cite{HK1}.

To simplify the presentation, we will assume that the set $\Gamma$ (introduced in \eqref{eq:Gamma*}) consists of a single smooth curve. 
When $\Gamma$ consists of a finite number of components, we can apply the analysis in this section to each component separately and sum up the results.

We will use the following notation:
\begin{itemize}
\item $ds$ denotes the arc-length measure on $\Gamma$\,;
\item $|\Gamma|=\int_\Gamma ds(x)$ denotes the arc-length measure of $\Gamma$\,;
\item ${\rm dist}_{\Gamma}:\Gamma\times\Gamma\to[0,\infty)$ denotes the arc-length distance in $\Gamma$\,.
\end{itemize}

We begin with the following geometric lemma.

\begin{lem}\label{lem:ball}
There exist two positive constants $C$ and $\ell_0$ (which depend on the domain $\Omega$, the function $B_0$ and the set $\Gamma$ in \eqref{eq:Gamma*}) such that, for all $a\in\Gamma$ and $\ell\in(0,\ell_0)$ satisfying
$$\overline{D(a,\ell)}\subset\Omega$$
then
$$\left|\int_{\overline{D(a,\ell)}\cap\Gamma} ds(x)-2\ell\right|\leq C\ell^2\,.$$
\end{lem}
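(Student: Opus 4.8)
The plan is to localize near an arbitrary point $a\in\Gamma$ and use a local graph parametrization of the curve. Since $B_0$ satisfies \eqref{Eq:Gamma}, we have $|B_0|+|\nabla B_0|\neq 0$ on $\overline\Omega$, so at every point of the zero set $\Gamma$ we have $\nabla B_0\neq 0$. By the implicit function theorem, $\Gamma$ is locally a $C^2$-smooth curve (in fact as smooth as $B_0$ allows, i.e. $C^3$, hence at least $C^2$). First I would fix $a\in\Gamma$ and, after a rigid rotation and translation placing $a$ at the origin with the tangent line to $\Gamma$ horizontal, write the portion of $\Gamma$ near $a$ as a graph $s\mapsto(s,\phi(s))$ where $\phi$ is $C^2$ with $\phi(0)=0$ and $\phi'(0)=0$. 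This is legitimate on a fixed neighborhood whose size $\ell_0$ depends only on $\Omega$, $B_0$ and $\Gamma$, by a standard compactness argument over the compact curve $\Gamma\cap\Omega$.

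Next I would compute the arc-length of the piece of $\Gamma$ inside the disc $\overline{D(a,\ell)}$. The key point is that for $\ell$ small the intersection of the graph with the disc corresponds to a parameter interval $s\in[-s_-,s_+]$ whose endpoints are determined by the equation $s^2+\phi(s)^2=\ell^2$. Since $\phi(s)=O(s^2)$ (because $\phi(0)=\phi'(0)=0$ and $\phi\in C^2$), one finds $s_\pm=\ell+O(\ell^2)$, so the parameter interval has total length $2\ell+O(\ell^2)$. The arc-length is then
\[
\int_{-s_-}^{s_+}\sqrt{1+\phi'(s)^2}\,ds.
\]
Since $\phi'(s)=O(s)=O(\ell)$ on this interval, the integrand satisfies $\sqrt{1+\phi'(s)^2}=1+O(\ell^2)$, so the integral equals $(2\ell+O(\ell^2))\cdot(1+O(\ell^2))=2\ell+O(\ell^2)$. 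All the constants in the $O(\cdot)$ terms are controlled by a uniform bound on $\|\phi''\|_\infty$, which in turn is controlled by the $C^2$-norm of the local parametrization of $\Gamma$, uniformly over $a\in\Gamma$ by compactness. This yields the claimed inequality $\bigl|\int_{\overline{D(a,\ell)}\cap\Gamma}ds(x)-2\ell\bigr|\leq C\ell^2$.

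The main obstacle, and the only genuinely substantive point, is making the constants $C$ and $\ell_0$ uniform in $a\in\Gamma$. For a single fixed $a$ everything is elementary; the content is that one may choose $\ell_0$ small enough and $C$ large enough to work simultaneously at every point of the curve. I would handle this by covering the compact set $\overline{\Gamma\cap\Omega}$ (recall $\Gamma\cap\partial\Omega$ is finite by \eqref{eq:Gamma*}) with finitely many coordinate patches in each of which $\Gamma$ is a graph with uniformly bounded $C^2$-norm, and by using the hypothesis $\overline{D(a,\ell)}\subset\Omega$ to ensure the disc stays away from $\partial\Omega$ so that no boundary component of $\Gamma$ enters the count. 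A second minor point worth checking is that for $\ell<\ell_0$ the set $\overline{D(a,\ell)}\cap\Gamma$ is a single arc (no other component of $\Gamma$ re-enters the small disc); this again follows from the uniform positive lower bound, over the compact $\Gamma$, on the distance between distinct components of $\Gamma$, shrinking $\ell_0$ if necessary.
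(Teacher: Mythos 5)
Your proof is correct and takes essentially the same route as the paper's: a local $C^2$ graph parametrization of $\Gamma$ near $a$ (made uniform in $a$ by compactness of $\Gamma$), determination of the two parameter values where the graph meets the circle via the equation $s^2+\phi(s)^2=\ell^2$, and then the arc-length integral $\int \sqrt{1+|\phi'(s)|^2}\,ds = 2\ell+O(\ell^2)$. The only cosmetic difference is that you rotate coordinates so that $\phi'(0)=0$, while the paper keeps a general slope $u_a'(0)$ and lets the factor $\sqrt{1+|u_a'(0)|^2}$ appearing in the endpoints $s_1,s_2$ cancel against the arc-length integrand.
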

\begin{proof}
Let $a\in \Gamma$ and $\ell>0$ such that $\overline{D(a,\ell)}\subset\Omega$. By a translation, we may assume that $a=(0,0)$. We can select an interval $I_a$, a $C^2$ function  $u_a:I_a\to\R$, and a constant $\tilde C>0$ such that
$$\overline{D(a,\ell)}\cap\Gamma\subset\{(s,u_a(s))~:~s\in I_a\}\,,\quad 0\in I_a, \quad (0,u_a(0))=0\,,$$
and
$$\forall~s\in I_a\,,\quad |u_a(s)|+|u_a'(s)|+ |u_a''(s)|\leq \tilde C\,.$$
Furthermore,  by the compactness of the set $\Gamma$, we may assume that the constant $\tilde C$ is independent of $a$ and $\ell$, for $\ell$ sufficiently small.

Define the function $f(s)=s^2+\big(u_a(s)\big)^2-\ell^2$. Using Taylor's formula for the function $u_a$ near $0$,  we can prove the following, for $\ell$ sufficiently small:
\begin{itemize}
\item There exist $s_1\in(-2\ell,0)$ and $s_2\in(0,2\ell)$ such that $f(s_1)=f(s_2)=0$ (by the intermediate value theorem)\,;
\item $f'(s)>0$ on $(-2\ell,2\ell)$\,;
\item $s_1$ and $s_2$ are the unique zeros of the function $f$ on the interval $(-2\ell,2\ell)$\,;
\item $s_1$ and $s_2$ satisfy
$$s_1=\frac{-\ell}{\sqrt{1+|u_a'(0)|^2}}+\mathcal O(\ell^2)\quad{\rm and}\quad s_2=\frac{\ell}{\sqrt{1+|u_a'(0)|^2}}+\mathcal O(\ell^2)\,.$$  
\end{itemize}
Therefore, we deduce that $\overline{D(a,\ell)}\cap \Gamma=\{(s,u_a(s))~:~s_1\leq s\leq s_2\}$ and
$$\int_{\overline{D(a,\ell)}\cap \Gamma}ds(x)=\int_{s_1}^{s_2}\sqrt{1+|u'_a(s)|^2}\,ds=2\ell+\mathcal O(\ell^2)~{\rm as}~\ell\to0_+\,.$$
\end{proof}

With Lemma~\ref{lem:ball} in hand, we can a construct a covering of $\Gamma$ by disks with disjoint interior.

\begin{lem}\label{lem:construction-balls}
There exist two positive constants $C$ and $\ell_0$ such that, for all  $\ell\in(0,\ell_0)$,  there exist $N\in\mathbb N$ and a collection of points $(a_j)_{1\leq j\leq N}$ on $\Gamma$ such that
\begin{align*}
&\forall~j,\quad \Big|{\rm dist}_{\Gamma}(a_j,a_{j+1})-2\ell\Big|\leq C\ell^2~\&~D(a_j,\ell)\subset\Omega\,,\\
&D(a_j,\ell)\cap D(a_{j'},\ell)=\emptyset~{\rm for~}j\not=j'\,,\\
&\left| N-\frac{|\Gamma|}{2\ell}\right|\leq C\,.
\end{align*}
\end{lem}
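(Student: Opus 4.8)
The plan is to build the points $a_j$ one at a time by marching along $\Gamma$ at essentially constant arc-length speed, and to read off both the disjointness of the disks and the count of points from the near-isometry between arc-length and Euclidean (chord) distance on a $C^2$ curve with bounded curvature.

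First I would fix a unit-speed parametrization $\gamma:J\to\Gamma$, where $J$ is an interval of length $|\Gamma|$ (taken $|\Gamma|$-periodic if $\Gamma$ is a closed curve contained in $\Omega$). By \eqref{Eq:Gamma} and $B_0\in C^3(\overline\Omega)$, the curve $\Gamma$ is a compact embedded $C^2$ curve, so its curvature is bounded by some $\kappa_{\max}$. This yields a uniform chord--arc estimate
\[
\big|\,|\gamma(s)-\gamma(t)|-|s-t|\,\big|\le C_0|s-t|^3\qquad (|s-t|\le 1),
\]
together with a positive \emph{self-distance} coming from compactness and embeddedness: there exist $\delta_0,d_0>0$ with $|\gamma(s)-\gamma(t)|\ge d_0$ whenever ${\rm dist}_\Gamma(\gamma(s),\gamma(t))\ge\delta_0$, since $|\gamma(s)-\gamma(t)|$ is continuous and vanishes only on the diagonal. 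This self-distance is the scale-$\ell$ content of Lemma~\ref{lem:ball}.

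Then I construct the points greedily. Starting from an $a_1\in\Gamma$ at distance $\ge\ell$ from $\partial\Omega$ — which is possible because, $\Gamma$ meeting $\partial\Omega$ transversally, the distance of $\gamma(s)$ to $\partial\Omega$ grows linearly near the endpoints — I set $a_{j+1}=\gamma(s_j+h_\ell)$, where $s_j$ is the arc-length coordinate of $a_j$ and $h_\ell=2\ell+C_1\ell^2$ with a fixed $C_1>0$ chosen so that the chord--arc estimate forces $|a_j-a_{j+1}|\ge 2\ell$ for all small $\ell$. I stop once continuing would bring a center within distance $\ell$ of $\partial\Omega$ (or would close the loop, in the closed case). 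By construction ${\rm dist}_\Gamma(a_j,a_{j+1})=h_\ell=2\ell+O(\ell^2)$ and each $\overline{D(a_j,\ell)}\subset\Omega$.

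For disjointness I split into ranges. Consecutive disks are disjoint since $|a_j-a_{j+1}|\ge 2\ell$ by the choice of $C_1$. For $|j-j'|\ge 2$ with arc-length gap $\le\delta_0$, monotonicity of $s_j$ gives $|s_j-s_{j'}|\ge 2h_\ell\ge 4\ell$, so the chord--arc estimate yields $|a_j-a_{j'}|\ge 4\ell(1-C_0\delta_0^2)\ge 3\ell>2\ell$ once $\delta_0$ and $\ell$ are small; and for arc-length gap $\ge\delta_0$ the self-distance bound gives $|a_j-a_{j'}|\ge d_0>2\ell$ for $\ell$ small. The hard part is precisely this last point — excluding that the curve folds back and revisits a neighborhood of an earlier $a_j$ — and it is handled entirely by the compactness/embeddedness input (equivalently, in the interior, by Lemma~\ref{lem:ball} applied with radius $2\ell$, whose single-arc conclusion says the points of $\Gamma$ within Euclidean distance $2\ell$ of $a_j$ all lie within arc-length $2\ell+O(\ell^2)$ of it). Finally, since each step advances the arc-length coordinate by $h_\ell=2\ell+O(\ell^2)$ and only a bounded arc-length near the two endpoints is discarded, one gets $N\,h_\ell=|\Gamma|+O(\ell)$, hence $\big|N-|\Gamma|/(2\ell)\big|\le C$, as claimed.
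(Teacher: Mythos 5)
Your proof is correct and takes essentially the same approach as the paper's: both distribute points along $\Gamma$ at arc-length spacing $2\ell+O(\ell^2)$, discard the points whose disks stick out of $\Omega$ (controlled via the transverse intersection of $\Gamma$ with $\partial\Omega$), and get disjointness of the disks from the chord--arc comparison together with the embeddedness/compactness of $\Gamma$. The only difference is organizational: the paper fixes the number $\mathfrak n$ of points a priori and equi-spaces them exactly along $\Gamma$, delegating the remaining verifications to \cite[Proof of Lemma~5.2, Step~2]{HK1}, whereas your greedy march determines $N$ a posteriori and supplies those verifications (chord--arc estimate, self-distance of the embedded curve, linear growth of ${\rm dist}(\cdot,\partial\Omega)$ near the endpoints) in a self-contained way.
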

\begin{proof}
For all $\ell\in(0,1)$, let $\mathfrak n$ be the unique natural number satisfying
$$
\frac{|\Gamma|}{2\ell}\left(1+\frac\ell2\right)^{-1}-1\leq \mathfrak n<\frac{|\Gamma|}{2\ell}\left(1+\frac{\ell}2\right)^{-1}\,.
$$
We select a collection of points $(b_j)_{1\leq j\leq\mathfrak n}\subset\Gamma$ such that ${\rm dist}_{\Gamma}(b_j,b_{j+1})=\frac{|\Gamma|}{\mathfrak n}$. For all $j$, let $e_j=|b_{j+1}-b_j|$ be  the Euclidean distance  between the points $b_{j+1}$ and $b_j$. We define the number $N$ as follows
$$N={\rm Card}\mathcal J\quad{\rm where~} \mathcal J=\{j~:~D(b_j,e_j)\subset\Omega\}\,.$$
For $\ell$ sufficiently small, we get that $\mathcal  J=\{j_0+k~:~1\leq k\leq N\}$ for some $j_0\in\{1,\cdots,\mathfrak n\}$. 
Now, for all $k\in\{1,\cdots, N\}$, we set $a_k=b_{j_0+k}$.

The points $(a_k)$ and the number $N$ satisfy the properties mentioned in Lemma~\ref{lem:construction-balls}. 
The details can be found in \cite[Proof of Lemma~5.2, Step~2]{HK1}.
\end{proof}

In Lemma~\ref{lem:ub-ball} below,  $\Fb$ denotes the unique vector field satisfying
\begin{equation}\label{eq:F}
\curl\Fb=B_0\,,\quad{\rm div}\Fb=0\quad{\rm in~}\Omega\,,\quad \nu\cdot\Fb=0~{\rm on~}\partial\Omega\,,
\end{equation}
where $\nu$ is the unit normal vector of the boundary of $\Omega$. Also, we introduce the following local Ginzburg-Landau energy
\begin{equation}\label{eq:E0}
\mathcal E_0(u,A;U)=\int_U\left(|(\nabla-i\kappa H A)u|^2-\kappa^2|u|^2+\frac{\kappa^2}2|u|^4\right)\,dx\,,
\end{equation}
where $U$ is an open subset of $\R^2$.

\begin{lem}\label{lem:ub-ball}
Let $0<M_1<M_2$. There exist two positive constants $C$ and $\kappa_0$ such that the following is true.

Assume that
\begin{itemize}
\item $\kappa\geq \kappa_0$ and $M_1\kappa^2\leq H\leq M_2\kappa^2$\,;
\item $\ell=\kappa^{-7/8}$, $\mathfrak a\in\Gamma$  and $D(\mathfrak a,\ell)\subset\Omega$\,;
\item $\mathfrak x\in\overline{D(\mathfrak a,\ell)}\cap\Gamma$ and $L_{\mathfrak x}=|\nabla B_0(\mathfrak x)|\frac{H}{\kappa^2}$\,.
\end{itemize}
Then there exists a function $w_{\mathfrak a,\mathfrak x}\in H^1_0(D(a,\ell))$ such that
$$\mathcal E_0\big(w_{\mathfrak a,\mathfrak x},\Fb;D(\mathfrak a,\ell)\big)\leq \Big(2L_{\mathfrak x}^{1/3}E(L_{\mathfrak x})
+C\kappa^{-1/16}\Big)\kappa\ell\,,$$
where the function $E(\cdot)$ is introduced in  \eqref{eq:E(L)}.
\end{lem}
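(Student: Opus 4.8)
The plan is to transplant a near-minimizer of the model strip functional $\mathcal E_{R,b}$ of \eqref{E:Rb} onto the disk $D(\mathfrak a,\ell)$ by means of an adapted change of coordinates, a gauge transformation and a rescaling, and then to bound the resulting remainders against the budget $C\kappa^{-1/16}\kappa\ell=C\kappa^{1/16}$. Throughout write $c=|\nabla B_0(\mathfrak x)|$ and fix the scale $\mu=(\kappa Hc)^{-1/3}=\kappa^{-1}L_{\mathfrak x}^{-1/3}$.

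First I would introduce tubular coordinates $(s,t)$ near $\Gamma$, with $s$ the arc-length along $\Gamma$ and $t$ the signed distance to $\Gamma$, so that the Jacobian is $1-t\,k(s)+O(t^2)$ (with $k$ the curvature of $\Gamma$) and the field reads $B_0(s,t)=\widetilde c(s)t+O(t^2)$, $\widetilde c(s_{\mathfrak x})=\pm c$. Since $\mathcal E_0(\cdot,\Fb;\cdot)$ is gauge invariant, I may write $\Fb=\nabla\phi+\widetilde\Fb+\delta\Fb$, where in these coordinates $\widetilde\Fb=\big(-\tfrac c2 t^2,0\big)$ and $\delta\Fb$ collects the Taylor remainder stemming from the $s$-variation of $\widetilde c$ and from the $O(t^2)$ part of $B_0$; one checks $\delta F_s=O\big(|s-s_{\mathfrak x}|\,t^{2}+t^{3}\big)$, and note $|s-s_{\mathfrak x}|\lesssim\ell$ on $D(\mathfrak a,\ell)$.

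Next I would rescale $s=\mu\sigma,\ t=\mu\tau$ and define $w_{\mathfrak a,\mathfrak x}=e^{i\kappa H\phi}\,v(\sigma,\tau)$, where $v$ is a (cut-off, rescaled) model near-minimizer. With the choice $\kappa Hc\,\mu^3=1$ the covariant derivative collapses to the model one, $\partial_s-i\kappa H\widetilde F_s=\mu^{-1}\big(\partial_\sigma+i\tfrac{\tau^2}{2}\big)$, while $\kappa^2\mu^2=L_{\mathfrak x}^{-2/3}=:b$; keeping the Jacobian factor $\mu^2$ from $ds\,dt=\mu^2\,d\sigma\,d\tau$, gauge invariance gives
\[
\mathcal E_0\big(w_{\mathfrak a,\mathfrak x},\Fb;D(\mathfrak a,\ell)\big)=\mathcal E_{R',b}(v)+\mathrm{Err},\qquad R'=\ell/\mu=\kappa^{1/8}L_{\mathfrak x}^{1/3},
\]
where $\mathrm{Err}$ gathers (i) the metric corrections, of relative size $O(\mu)=O(\kappa^{-1})$, hence $O(\kappa^{-7/8})$ in absolute terms; and (ii) the magnetic remainder, whose diagonal part $\mu^2(\kappa H\delta F_s)^2|v|^2$ and cross part $2\mu\,(\kappa H\delta F_s)\,\Im(\overline{(\partial_\sigma+i\tfrac{\tau^2}{2})v}\;v)$ integrate, using $\kappa H\mu^3=c^{-1}=O(1)$, to $O\big(\mu^2(R')^3+\mu(R')^2\big)=O(\kappa^{-3/4})$. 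I would then pick $v$ to be a near-minimizer of the Dirichlet problem $\mathfrak e(L_{\mathfrak x}^{-2/3};R')$ of \eqref{ebr} (and simply $v=0$ when $L_{\mathfrak x}\ge\lambda_0^{-3/2}$, where $E(L_{\mathfrak x})=0$ and $w=0$ already obeys the bound), multiplied by a radial cut-off forcing $v\in H^1_0$ of the rescaled disk; the super-exponential decay of the Montgomery ground state in $\tau$ together with the Dirichlet vanishing at $\sigma=\pm R'$ keeps the cut-off cost $O(1)$. By Theorem~\ref{Thm:EL},
\[
\mathcal E_{R',b}(v)\le 2R'\Big(E(L_{\mathfrak x})+C(1+L_{\mathfrak x}^{-2/3})(R')^{-2/3}\Big)=2L_{\mathfrak x}^{1/3}E(L_{\mathfrak x})\,\kappa\ell+O(\kappa^{1/24}).
\]

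Collecting everything, the largest remainder is $O(\kappa^{1/24})$, comfortably below the allotted $C\kappa^{1/16}$, which yields the claim. The hard part will be the bookkeeping in $\mathrm{Err}$: one must retain the Jacobian $\mu^2$ when estimating the magnetic remainder, since without it the $s$-variation of $|\nabla B_0|$ along $\Gamma$ would seem to contribute a term of size $O((R')^3)$ or worse, whereas the $\mu^2$-weight brings it down to $O(\kappa^{-3/4})$; and one must verify that cutting the Dirichlet strip minimizer down to the circular boundary of $D(\mathfrak a,\ell)$ costs only $O(1)$, for which the rapid normal decay of the ground state is essential. The choice $\ell=\kappa^{-7/8}$ is precisely what enforces $\mu\ll\ell\ll1$, i.e. $R'\to\infty$ (so that the infinite-strip limit $E(L_{\mathfrak x})$ is accurate) while $\Gamma$ and $B_0$ remain well approximated by their linearizations across $D(\mathfrak a,\ell)$.
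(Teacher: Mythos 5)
Your proposal is essentially the paper's own proof, made self-contained: the paper takes its test function directly from \cite[Eq.~(5.11)]{HK1} --- which is precisely this transplantation of a Dirichlet strip (near-)minimizer onto $D(\mathfrak a,\ell)$ via tubular coordinates along $\Gamma$, a gauge transformation, and the scaling $\mu=\kappa^{-1}L_{\mathfrak x}^{-1/3}$ giving $b=L_{\mathfrak x}^{-2/3}$, $R=L_{\mathfrak x}^{1/3}\kappa\ell$ --- and then simply inserts the parameter choices $\ell=\kappa^{-7/8}$, $a\approx(\kappa\ell)^{-1}$, $\delta=\kappa^{-3/8}$ into the ready-made energy estimate \cite[Eq.~(5.15)]{HK1}. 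Your error bookkeeping is consistent with (in places sharper than) that estimate, whose dominant term $a^{-1/2}\approx\kappa^{1/16}$ is exactly the cut-off cost you claim to reduce to $O(1)$; that refinement is legitimate but requires the Agmon-type decay of the \emph{nonlinear} strip minimizer (available in \cite{HK1}), not merely of the linear Montgomery ground state, and in any case both accountings stay within the allotted $C\kappa^{-1/16}\kappa\ell$.
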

\begin{proof}
We will skip the reference to the points $\mathfrak a$ and $\mathfrak x$ by writing $L=L_{\mathfrak x}$ and $w=w_{\mathfrak a,\mathfrak x}$. Define $a=A(\kappa\ell)^{-1}$ and $R=L^{1/3}\kappa\ell$, where $A$ is a constant selected such that, for $\kappa$ sufficiently large, we have
\begin{equation}\label{eq:cond-R}
R\geq 4\max(a^{-1/2}L^{-2/3},1)\,.
\end{equation}
Then we take $w$ as in \cite[Eq.~(5.11)]{HK1}. Since $R$ satisfies \eqref{eq:cond-R}, then the function $w_{\mathfrak a}$ satisfies (see \cite[Eq.~(5.15)]{HK1}), for some constant $\tilde C>0$ and for all $\delta>0$,
\begin{multline*}
\mathcal E_0\big(w,\Fb;D(\mathfrak a,\ell)\big)\leq 2(1+\delta)(1-a)R\,E(L)\\ +\tilde C\Big((1+L^{-2/3})R^{1/3}
+a^{-1/2}(1+a^{-1}L^{-2/3}R^{-2})+(\delta\kappa^2+\delta^{-1}\kappa^2H^2\ell^6)\ell^2\Big)\,.
\end{multline*}
For $\delta=\kappa^{-3/8}$, $\ell=\kappa^{-7/8}$, $a\approx(\kappa\ell)^{-1}$ and $H\approx \kappa^2$, we get the upper bound 
in Lemma~\ref{lem:ub-ball}, for some constant $C>\tilde C$.
\end{proof}

Now we can prove the

\begin{proposition}\label{prop:ub-HK}
Let $0<M_1<M_2$. There exist two positive constants $C$ and $\kappa_0$ such that, for all  $\kappa\geq \kappa_0$ and $M_1\kappa^2\leq H\leq M_2\kappa^2$, the ground state energy in \eqref{egs} satisfies
$${\mathrm E}_{\rm gs}(\kappa,H)\leq \kappa\int_\Gamma\left(\frac{H}{\kappa^2}|\nabla B_0(x)|\right)^{1/3} E\left(\frac{H}{\kappa^2}|\nabla B_0(x)|\right)\,ds(x)+C\kappa^{15/16}\,.$$
\end{proposition}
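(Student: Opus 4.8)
The plan is to construct a global test configuration by patching together the local minimizers $w_{\mathfrak a_j,\mathfrak x_j}$ furnished by Lemma~\ref{lem:ub-ball} over a well-chosen covering of $\Gamma$. First I would invoke Lemma~\ref{lem:construction-balls} with the specific choice $\ell=\kappa^{-7/8}$ to obtain a collection of $N$ disjoint disks $D(a_j,\ell)\subset\Omega$ centered on $\Gamma$, with $N$ comparable to $|\Gamma|/(2\ell)=\tfrac12|\Gamma|\kappa^{7/8}$ up to a bounded error. On each disk I apply Lemma~\ref{lem:ub-ball} with $\mathfrak a=a_j$ and some sampling point $\mathfrak x_j\in\overline{D(a_j,\ell)}\cap\Gamma$, producing a function $w_j\in H^1_0(D(a_j,\ell))$. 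Because these disks are pairwise disjoint, the global test function $\psi=\sum_j w_j$ (extended by zero, paired with the field $\Fb$ of \eqref{eq:F}) has energy that splits as an exact sum, $\mathcal E_0(\psi,\Fb;\Omega)=\sum_j\mathcal E_0(w_j,\Fb;D(a_j,\ell))$, and this bounds $\mathrm E_{\rm gs}(\kappa,H)$ from above since $\Fb$ is curl-free-corrected to carry the field $B_0$.

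Next I would sum the local bounds. Lemma~\ref{lem:ub-ball} gives
\[
\mathcal E_0(w_j,\Fb;D(a_j,\ell))\leq\Big(2L_{\mathfrak x_j}^{1/3}E(L_{\mathfrak x_j})+C\kappa^{-1/16}\Big)\kappa\ell\,,
\]
with $L_{\mathfrak x_j}=|\nabla B_0(\mathfrak x_j)|\tfrac{H}{\kappa^2}$. Summing over the $N\approx\tfrac12|\Gamma|\kappa^{7/8}$ disks, the error terms contribute $N\cdot C\kappa^{-1/16}\kappa\ell=O(\kappa^{7/8}\cdot\kappa^{-1/16}\cdot\kappa\cdot\kappa^{-7/8})=O(\kappa^{15/16})$, which is exactly the claimed remainder. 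The main term becomes $\kappa\sum_j 2L_{\mathfrak x_j}^{1/3}E(L_{\mathfrak x_j})\ell$, and the remaining task is to recognize this as a Riemann sum for the integral $\kappa\int_\Gamma(\tfrac{H}{\kappa^2}|\nabla B_0|)^{1/3}E(\tfrac{H}{\kappa^2}|\nabla B_0|)\,ds$.

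The identification of the Riemann sum is where I would be careful. By Lemma~\ref{lem:construction-balls}, each arc segment between consecutive points has length $2\ell+O(\ell^2)$, so $2\ell$ plays the role of the arc-length increment $ds$; thus $\sum_j 2\ell\,g(\mathfrak x_j)$ approximates $\int_\Gamma g\,ds$ for the continuous integrand $g(x)=(\tfrac{H}{\kappa^2}|\nabla B_0(x)|)^{1/3}E(\tfrac{H}{\kappa^2}|\nabla B_0(x)|)$. The integrand is uniformly continuous on the compact curve $\Gamma$ (the map $x\mapsto|\nabla B_0(x)|$ is $C^2$ by \eqref{Eq:Gamma}, $L\mapsto E(L)$ is continuous by Theorem~\ref{Thm:EL}, and $\tfrac{H}{\kappa^2}\in[M_1,M_2]$ is bounded), and since the mesh $2\ell=2\kappa^{-7/8}\to0$, the Riemann-sum error is $o(1)$ per unit length, i.e.\ $O(|\Gamma|)\cdot o(1)$ in the sum, which after multiplication by $\kappa$ is absorbed. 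The only subtlety is to confirm that the accumulated discretization error, together with the portions of $\Gamma$ near $\partial\Omega$ not covered by interior disks (a bounded number of boundary arcs of length $O(\ell)$, whose omitted contribution to the integral is $O(\kappa\ell)=O(\kappa^{1/8})$), all stay within the allotted $C\kappa^{15/16}$ error.

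The main obstacle I expect is the bookkeeping of these several error sources so that none exceeds $\kappa^{15/16}$: the per-disk error from Lemma~\ref{lem:ub-ball} summed over $N$ disks is borderline (it matches the target exactly), so the Riemann-sum discretization error and the boundary-exclusion error must each be shown to be \emph{strictly smaller} than $\kappa^{15/16}$. Since $E(L)\leq0$ the main term is nonpositive, so controlling its sign is not an issue, but one must ensure the continuity modulus of the integrand times $\kappa$ does not spoil the estimate; because the mesh is $\kappa^{-7/8}$ and $\kappa$ multiplies the whole sum, a crude Lipschitz bound gives discretization error $O(\kappa\cdot|\Gamma|\cdot\kappa^{-7/8})=O(\kappa^{1/8})$, comfortably below $\kappa^{15/16}$, so the dominant genuine error is indeed the $\kappa^{15/16}$ coming from Lemma~\ref{lem:ub-ball}.
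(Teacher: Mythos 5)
Your construction is the same as the paper's: the covering of $\Gamma$ from Lemma~\ref{lem:construction-balls} with $\ell=\kappa^{-7/8}$, the test function obtained by summing the $w_{a_j,x_j}$ of Lemma~\ref{lem:ub-ball} over the disjoint disks (paired with the field $\Fb$ of \eqref{eq:F}, so that the field-energy term drops and the energy splits), and the bookkeeping $N\cdot C\kappa^{-1/16}\kappa\ell=O(\kappa^{15/16})$ are all exactly as in the paper, and are sound. The gap is in your identification of the Riemann sum. Your primary argument --- uniform continuity of the integrand gives an error ``$o(1)$ per unit length \dots which after multiplication by $\kappa$ is absorbed'' --- fails quantitatively: uniform continuity only yields a modulus $\omega(2\ell)\to 0$ with \emph{no rate}, so the discretization error is merely $o(\kappa)$, which is not bounded by $C\kappa^{15/16}$. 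Your fallback, the ``crude Lipschitz bound'' giving $O(\kappa^{1/8})$, would indeed suffice, but it presupposes that $L\mapsto L^{1/3}E(L)$ is Lipschitz on the relevant compact interval, and nothing you cite provides this: Theorem~\ref{Thm:EL} states only that $E$ is continuous and monotone increasing. So the one step that actually produces the claimed $\kappa^{15/16}$ rate rests on an unproved regularity property of $E$.

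The paper avoids needing any modulus of continuity by a one-sided choice of the sampling points: $x_j$ is taken to be a \emph{minimum} point of $x\mapsto\big(\tfrac{H}{\kappa^2}|\nabla B_0(x)|\big)^{1/3}E\big(\tfrac{H}{\kappa^2}|\nabla B_0(x)|\big)$ on the arc $\overline{D(a_j,\ell)}\cap\Gamma$. Since the integrand is $\leq 0$, the resulting sum $\sum_j |\overline{D(a_j,\ell)}\cap\Gamma|\,L_{x_j}^{1/3}E(L_{x_j})$ is a lower Riemann sum, hence bounded above by the integral over the covered set $V_\ell$ \emph{exactly}, with no continuity estimate at all; Lemma~\ref{lem:ball} (arc length $=2\ell+O(\ell^2)$) and $|\Gamma\setminus V_\ell|\leq C\ell$ then cost only $O(\kappa\ell)=O(\kappa^{1/8})$. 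To repair your version while keeping arbitrary sampling points, you would first have to prove that $E$ is locally Lipschitz --- for instance by noting that $b\mapsto\mathfrak{e}(b;R)$ is an infimum of functions affine in $b$, hence concave, so its normalized limit in Theorem~\ref{Thm:EL} is concave in $b=L^{-2/3}$ and therefore locally Lipschitz --- but that is an additional argument your proposal neither gives nor cites, whereas the minimizer trick makes it unnecessary.
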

\begin{proof}
Let $\ell=\kappa^{-7/8}$ and $\big(D(a_j,\ell)\big)_{1\leq j\leq N}$ be the collection of the pairwise disjoint disks constructed in  Lemma~\ref{lem:construction-balls}, for $\kappa$ sufficiently large. For all $j$, choose the point $x_j$ such that
$$\min_{x\in\overline{D(a_j,\ell)}\cap\Gamma}\left(\frac{H}{\kappa^2}|\nabla B_0(x)|\right)^{1/3} E\left(\frac{H}{\kappa^2}|\nabla B_0(x)|\right)=\left(\frac{H}{\kappa^2}|\nabla B_0(x_j)|\right)^{1/3} E\left(\frac{H}{\kappa^2}|\nabla B_0(x_j)|\right)\,.$$
We define the function $w\in H^1_0(\Omega)$ as follows
$$w(x)=
\begin{cases}
w_{a_j,x_j}(x)~:~x\in D(a_j,\ell)\\
0~:~x\not\in \displaystyle\bigcup_{1\leq j\leq N}D(a_j,\ell)\,.
\end{cases}
$$
Let $\Fb$ be the vector field in \eqref{eq:F}. Since ${\rm E}_{gs}(\kappa,H)\leq\mathcal E(w,\Fb)=\displaystyle\sum_{j=1}^{N}\mathcal E_0(w_{a_j,x_j},\Fb)$, 
 Lemma~\ref{lem:ub-ball} yields
\begin{align*}
{\rm E}_{\rm gs}(\kappa,H)
&\leq \sum_{j=1}^{N}\left(\Big(2L_{x_j}^{1/3}E(L_{x_j})
+C\kappa^{-1/16}\Big)\kappa\ell\right)\\
&\leq \kappa\underset{\rm lower~ Riemann ~sum}{\underbrace{\sum_{j=1}^{N}\left(\Big(|\overline{D(a_j,\ell)}\cap\Gamma| L_{x_j}^{1/3}E(L_{x_j})\right)}}
+C\Big(\ell+\kappa^{-1/16}\Big)\kappa{\rm\quad by ~Lemma~\ref{lem:ball}}\\
&\leq \kappa\int_{V_\ell} \left\{\left(\frac{H}{\kappa^2}|\nabla B_0(x)|\right)^{1/3} E\left(\frac{H}{\kappa^2}|\nabla B_0(x)|\right)\right\}\,ds(x)+C\kappa^{15/16}\,,
\end{align*}
where $V_\ell=\displaystyle\bigcup_{j=1}^{N}D(x_j,\ell)\cap \Gamma$. But, by Lemma~\ref{lem:construction-balls}, 
$|\Gamma\setminus V_\ell|\leq C\ell$ which is what we need to  obtain the upper bound in Proposition~\ref{prop:ub-HK}.
\end{proof}

\begin{proposition}\label{prop:lb-HK}
Let $0<M_1<M_2$. There exist two positive constants $C$ and $\kappa_0$ such that, for all  $\kappa\geq \kappa_0$ and $M_1\kappa^2\leq H\leq M_2\kappa^2$, the ground state energy in \eqref{egs} satisfies
$${\mathrm E}_{\rm gs}(\kappa,H)\geq \kappa\int_\Gamma\left(\frac{H}{\kappa^2}|\nabla B_0(x)|\right)^{1/3} E\left(\frac{H}{\kappa^2}|\nabla B_0(x)|\right)\,ds(x)-C\kappa^{11/12}\,.$$
\end{proposition}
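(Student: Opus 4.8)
The plan is to revisit the lower bound of \cite{HK1} and render its qualitative $\frac{\kappa^3}{H}o(1)$ remainder explicit; since $H\approx\kappa^2$ this amounts to showing that the relative error in \eqref{eq:ae-HK} is $O(\kappa^{-1/12})$. I would start from an arbitrary minimizer $(\psi,\Ab)$ of \eqref{GL-Energy} and invoke the standard a priori inputs: $\|\psi\|_\infty\le1$; the Agmon/concentration estimate $\int_\Omega|\psi|^2\lesssim\kappa^{-1}$, expressing that $|\psi|$ is small outside a tube of width $\sim(\kappa H)^{-1/3}\sim\kappa^{-1}$ about $\Gamma$; and the field estimate $\|\Ab-\Fb\|_\infty\lesssim(\kappa H)^{-1}\|(\nb-i\kappa H\Ab)\psi\|_2$ coming from the second Ginzburg--Landau equation, with $\Fb$ the canonical potential of \eqref{eq:F}. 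A Young splitting
\[
\int_\Omega|(\nb-i\kappa H\Ab)\psi|^2\ \ge\ (1-\eta)\int_\Omega|(\nb-i\kappa H\Fb)\psi|^2-\eta^{-1}\kappa^2H^2\int_\Omega|\Ab-\Fb|^2|\psi|^2
\]
then lets me replace $\Ab$ by $\Fb$ at an admissible cost. Fixing a length scale $\ell$ comparable to the $\kappa^{-7/8}$ of Proposition~\ref{prop:ub-HK}, I would choose a partition of unity $\sum_j\chi_j^2\equiv1$ subordinate to a covering of $\overline\Omega$ by cells of size $\ell$ and apply the IMS localization formula
\[
\mathcal E(\psi,\Fb)\ \ge\ \sum_j\mathcal E_0(\chi_j\psi,\Fb;\supp\chi_j)-\sum_j\int_\Omega|\nb\chi_j|^2|\psi|^2,
\]
with $\mathcal E_0$ the local energy \eqref{eq:E0}; the loss is $\lesssim\ell^{-2}\kappa^{-1}$ by the concentration estimate.

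The cells then split according to their distance to $\Gamma$. For a cell staying at distance $\gtrsim\ell$ from $\Gamma$, assumption \eqref{Eq:Gamma} gives $|B_0|\gtrsim\ell$ there, so the field $\kappa H|B_0|$ is at least of order $\kappa^3\ell\gg\kappa^2$; the bottom of the magnetic Dirichlet Laplacian on the cell then exceeds $\kappa^2$ and the corresponding term $\mathcal E_0(\chi_j\psi,\Fb;\cdot)$ is nonnegative. Hence only the cells meeting the tube $\{\dist(\cdot,\Gamma)\lesssim\ell\}$ contribute, and on each of them, centered at a point $x_0\in\Gamma$, I would pass to boundary coordinates $(s,t)$ (arc-length along $\Gamma$ and signed distance), gauge $\Fb$ to the linear potential generating the frozen field $|\nb B_0(x_0)|\,t$, and perform the rescaling $(s,t)=\rho(\sigma,\tau)$ with $\rho=(\kappa H|\nb B_0(x_0)|)^{-1/3}$.

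Under this rescaling the local functional becomes precisely the model energy $\mathcal E_{R,b}$ of \eqref{E:Rb} with
\[
b=L_{x_0}^{-2/3},\qquad R=\ell\,\kappa\,L_{x_0}^{1/3},\qquad L_{x_0}=\tfrac{H}{\kappa^2}|\nb B_0(x_0)|,
\]
and, because enlarging the admissible domain only lowers the infimum, the local energy is bounded below by $\mathfrak e(L_{x_0}^{-2/3};R)\ge2R\,E(L_{x_0})=2\ell\kappa\,L_{x_0}^{1/3}E(L_{x_0})$, the last inequality being the clean lower bound of Theorem~\ref{Thm:EL}, which carries no $R$-dependent correction. Choosing in each cell the point $x_0$ that \emph{maximizes} $L^{1/3}E(L)$ over $\overline{D}\cap\Gamma$ (recall $E\le0$) turns $\sum_j2\ell\kappa\,L_{x_0}^{1/3}E(L_{x_0})$ into an upper Riemann sum for $\kappa\int_\Gamma L^{1/3}E(L)\,ds$, hence $\ge\kappa\int_\Gamma L^{1/3}E(L)\,ds$ up to the geometric error furnished by Lemmas~\ref{lem:ball} and~\ref{lem:construction-balls}. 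Assembling the surviving cells produces the asserted inequality.

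The main obstacle is the \textbf{uniform quantitative control of all the error terms}, which is what upgrades the $o(1)$ of \cite{HK1} to an explicit power of $\kappa$. Each source must be estimated and then balanced by an appropriate choice of $\ell$ and $\eta$: the gauge substitution, amplified by the large prefactor $\kappa^2H^2\approx\kappa^6$ yet tamed by $\|\Ab-\Fb\|_\infty\lesssim(\kappa H)^{-1}\kappa^{1/2}$ and the concentration bound; the freezing of $|\nb B_0|$ and the linearization error $|B_0-|\nb B_0(x_0)|\,t|=O(\ell^2)$, which after rescaling perturbs the model field only by $O(\rho^{4}\kappa H)=O(\kappa^{-1})$ on the support of $\psi$ since $t\lesssim\rho$; the curvature correction to the $(s,t)$ metric, of relative size $O(\ell)$; and the Riemann-sum error. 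The genuinely delicate point is to keep the sharp constant $2R\,E(L)$ in each cell while carrying these estimates through the transition layer at the edge of the tube, where $\kappa H|B_0|$ is only marginally larger than $\kappa^2$ so that the positivity argument for the exterior cells is least robust; it is the bookkeeping in this layer, together with the interplay between $\ell$ and the concentration width $\kappa^{-1}$, that fixes the final rate $\kappa^{11/12}$.
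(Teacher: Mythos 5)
Your overall architecture is sound and is, at a coarse level, the same one underlying the paper's proof: localize, discard the region away from $\Gamma$, compare each cell near $\Gamma$ with the model energy through the clean inequality $\mathfrak e(L^{-2/3};R)\geq 2R\,E(L)$ from Theorem~\ref{Thm:EL}, and reassemble via a Riemann sum using Lemmas~\ref{lem:ball} and \ref{lem:construction-balls}. The paper, however, does not redo this analysis: it cites the parameter-explicit intermediate inequalities already proved in \cite{HK1} --- the splitting inequality (7.11) there, the Agmon decay (Thm.~6.3) used to discard the exterior region, and the local lower bound (7.19), whose error has the explicit form $C(\sqrt{a}\ell)^{-1}+C\big(a+\delta+\delta^{2\alpha-1}\tfrac{\kappa}{H}H^{-2\alpha/3}+\eta\big)\kappa$ --- and then merely optimizes the free parameters, taking $\ell=\delta H^{-1/3}$, $a=\delta=\kappa^{-1/6}$, $\alpha=3/4$, $\eta=\ell$. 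That optimization is exactly where the exponent $11/12$ comes from, via $(\sqrt{a}\,\ell)^{-1}\approx\kappa^{11/12}$.

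The genuine gap in your proposal is that the step constituting the actual content of the proposition --- upgrading the $o(1)$ remainder of \eqref{eq:ae-HK} to an explicit power of $\kappa$ --- is never carried out: you list the error sources and then defer the ``bookkeeping'' that ``fixes the final rate $\kappa^{11/12}$''. Moreover, the one piece of bookkeeping you do state is incorrect. You claim the field-linearization error perturbs the model only by $O(\rho^{4}\kappa H)=O(\kappa^{-1})$ ``since $t\lesssim\rho$''. But the Taylor remainder $\big|B_0(x)-\nabla B_0(x_0)\cdot(x-x_0)\big|$ is $O(|x-x_0|^2)$, and inside a cell of tangential size $\ell\approx\kappa^{-7/8}$ the displacement \emph{along} $\Gamma$ is of order $\ell\gg\rho\approx\kappa^{-1}$; the concentration of $\psi$ restricts only the normal variable. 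Hence the field error on the relevant support is $O(\ell^2)$, the induced potential error in any admissible gauge is $O(\ell^3)$, and after multiplication by $\kappa H\approx\kappa^3$ this is of size $\kappa^{3/8}$ --- a growing quantity, not $O(\kappa^{-1})$ --- which must itself be absorbed by a Young-type splitting with its own $\epsilon\kappa+\epsilon^{-1}(\cdots)$ cost; this is precisely the origin of the $\delta$- and $\alpha$-dependent terms in \cite{HK1} that the paper's proof optimizes. Until that budget (together with the IMS loss $\ell^{-2}\kappa^{-1}$, the $\Ab\to\Fb$ replacement cost, the curvature correction, and the Riemann-sum defect) is written out and shown to total $O(\kappa^{11/12})$, your argument establishes only the qualitative statement already contained in \eqref{eq:ae-HK}, not the proposition.
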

\begin{proof}
Let $a>0$ and $\delta>0$ be two sufficiently small parameters. Let $\ell=\delta H^{-1/3}$ and define the two domains
$$D_1=\{x\in\Omega~:~{\rm dist}(x,\Gamma)<2\sqrt{a}\ell\}\quad{\rm and}\quad D_2=\{x\in\Omega~:~{\rm dist}(x,\Gamma)>\sqrt{a}\ell\}\,.$$
There exist two smooth functions $\chi_1$ and $\chi_2$ such that
$$\chi_1^2+\chi_2^2=1\,,\quad{\rm supp}\chi_j\subset D_j\,,\quad{\rm and}\quad |\nabla\chi_j|\leq C(a\ell^2)^{-1}\,,$$
for some positive constant $C$.

Let $(\psi,\Ab)$ be a minimizer of the functional in \eqref{GL-Energy}. The following holds (see \cite[Eq.~(7.11)]{HK1})
\begin{align*}{\rm E}_{\rm gs}(\kappa,H)&=\mathcal E(\psi,\Ab)\geq\mathcal E_0(\psi,\Ab;\Omega)\\
&\geq \sum_{j=1}^2\mathcal E_0(\chi_j\psi,\Ab;\Omega)-\frac{C}{\sqrt{a}\ell}\,,\end{align*}
where the functionals $\mathcal E$ and $\mathcal E_0$ are introduced in \eqref{GL-Energy} and \eqref{eq:E0} respectively.

We will select the parameters $a$ and $\delta$ such that $\sqrt{a}\ell\gg\kappa^{-1}$ (recall that $\ell=\delta H^{-1/3}$). By \cite[Thm.~6.3]{HK1},  $|\psi|^2$ is exponentially small in $D_2$, hence $\mathcal E_0(\chi_2\psi,\Ab;\Omega)\geq -\kappa^{-1}$ for $\kappa$ sufficiently large. Consequently
\begin{equation}\label{eq:HK-lb}
{\rm E}_{\rm gs}(\kappa,H)
\geq \mathcal E_0(\chi_1\psi,\Ab;\Omega)-C\left(\frac{1}{\sqrt{a}\ell}+\frac1\kappa\right)\,.
\end{equation}
Having Lemma~\ref{lem:ball} in hand, we can use the following lower bound  (see \cite[Eq.~(7.19)]{HK1})
\begin{multline*}
\mathcal E_0(\chi_1\psi,\Ab;\Omega)\geq \kappa\int_{\Gamma}\left(\frac{H}{\kappa^2}|\nabla B_0(x)|\right)^{1/3} E\left(\frac{H}{\kappa^2}|\nabla B_0(x)|\right)\,ds(x)\\-C(\sqrt{a}\ell)^{-1}-C\Big(a+\delta+\delta^{2\alpha-1}\frac{\kappa}{H}H^{-2\alpha/3}+\eta \Big)\kappa\end{multline*}
for $\eta=\ell$ and for all  $\alpha\in(0,1)$. We insert this lower bound into \eqref{eq:HK-lb} then we choose $a=\delta=\kappa^{-1/6}$ and $\alpha=3/4$. This finishes the proof of Proposition~\ref{prop:lb-HK}.
\end{proof}

\begin{proof}[Proof of Theorem~\ref{corol:KN1}]
Propositions~\ref{prop:ub-HK} and \ref{prop:lb-HK} yield that
\begin{equation}\label{eq:corol:KN1*}
{\mathrm E}_{\rm gs}(\kappa,H)=\kappa\int_\Gamma\left(\frac{H}{\kappa^2}|\nabla B_0(x)|\right)^{1/3} E\left(\frac{H}{\kappa^2}|\nabla B_0(x)|\right)\,ds(x)+\mathcal O(\kappa^{11/12})\,.
\end{equation}
Under the assumption~\ref{ass:B0-2}, the principal term in \eqref{eq:corol:KN1*} satisfies \eqref{eq:ae-HK*} and is of order $|\Gamma_\kappa|\big(\rho(\kappa)\big)^2\geq c\big(\rho(\kappa)\big)^{5/2}$, for some constant $c>0$. By \eqref{eq:ae-HK*} and the assumption $\rho(\kappa)\gg \kappa^{-1/30}$, we get 
\begin{equation}\label{eq:corol:KN1}
\kappa^{11/12}\ll \frac{\kappa\lambda_0^{-3/2}}{2\|u_{0}\|_4^4} \int_{\Gamma}
 \left(\Big(\frac{H}{\kappa^2}|\nabla B_0(x)|\Big)^{-2/3}-\lambda_0\right)_+^2\,ds(x)\,.
\end{equation}
Now, collecting \eqref{eq:ae-HK*}, \eqref{eq:corol:KN1} and \eqref{eq:corol:KN1*}, we finish the proof of Theorem~\ref{corol:KN1}.  
\end{proof}

\subsection*{Acknowledgments}
The authors would like to thank B. Helffer for his valuable comments on the manuscript, and the anonymous referee for the valuable suggestions.  A.K. is supported by a grant from Lebanese University.


\begin{thebibliography}{100}  
   \bibitem{Al.He.}  Y. Almog and B. Helffer. The distribution of surface superconductivity along the boundary : on a conjecture of X. B. Pan. {\it SIAM J. Math. Anal.} {\bf 38}, 1715-1732 (2007).
 \bibitem{AHP}  Y. Almog, B. Helffer and X. B. Pan. Mixed normal-superconducting states in the presence of strong electric currents. {\it Arch. Rational Mech. Anal.} {\bf 223}, 419-462 (2017).
  %
     \bibitem{AK} W. Assaad and A. Kachmar. The influence of magnetic steps on bulk superconductivity. {\it Discrete and Continuous Dynamical Systems (A)} {\bf 36} (12), 6623-6643 (2016).  
 \bibitem{Att1} K. Attar. The ground state energy of the two dimensional Ginzburg-Landau functional with variable magnetic field. {\it Annales de l'Institut Henri Poincar\'e- Analyse Non-Lin\'eaire} {\bf 32}, 325-345 (2015).
    \bibitem{Att2} K. Attar. Energy and vorticity of the Ginzburg-Landau model with variable magnetic field. {\it Asymptot. Anal.} {\bf 93}, 75-114 (2015).
      \bibitem{Att3} K. Attar. Pinning with a variable magnetic field of the two-dimensional Ginzburg-Landau model. { \it Non-Linear Analysis: TMA.} {\bf 139}, 1-54 (2016).
      %
          \bibitem{CL} A. Contreras and X. Lamy. Persistence of superconductivity in thin shells beyond $H_{c1}$.  {\it Commun. Contemp. Math.} {\bf 18} article no. 1550047, 21 p, (2016).
          \bibitem{CR2}M. Correggi and N. Rougerie. Boundary behavior of the Ginzburg-Landau order parameter in the surface superconductivity regime. {\it Arch. Rational Mech. Anal}. {\bf 219}, 553-606 (2015).
\bibitem{CR1} M. Correggi and N. Rougerie. Effects of boundary curvature on surface superconductivity.
{\it Letters in Mathematical Physics}  1-23 (2016).
\bibitem{CR3} M. Correggi and Nicolas Rougerie. On the Ginzburg-Landau functional in the surface superconductivity regime. {\it Comm. Math. Phys.} {\bf 332}, 1297-1343 (2014).
 \bibitem{dGe} P.G. de\,Gennes. {\it Superconductivity of Metals and Alloys.} Benjamin, Amsterdam (1996).
        \bibitem{FH-cvpde} S. Fournais and B. Helffer. Energy asymptotics for type II superconductors. {\it  Calc. Var. Partial Differential Equations} {\bf 24} (3), 341-376 (2005).
   \bibitem{FH-b} S. Fournais and B. Helffer. {\it{Spectral methods in surface superconductivity}}. Progress in Nonlinear Differential Equations and Their
        Applications, Vol. 77. Birkh\"{a}user Boston Inc., Boston, MA (2010).
               \bibitem{FK3D} S. Fournais and A. Kachmar. The ground state energy of the three dimensional  Ginzburg-Landau functional. Part~I: Bulk regime. {\it Comm. Partial. Differential Equations} {\bf 38} (2), 339-383 (2013).
         \bibitem{FK-am} S. Fournais and A. Kachmar. Nucleation of bulk superconductivity close to critical magnetic field. {\it Advan. Math.} {\bf 226} (2), 1213-1258 (2011).  
           \bibitem{B.H.notes} B. Helffer. The Montgomery operator revisited.{ \it Colloquium Mathematicum} {\bf 118} (2), 391-400 (2011).
 \bibitem{HK} B. Helffer and A. Kachmar. Decay of superconductivity away from the magnetic zero set. {\it arXiv:1604.02402v1} (2016). 
 \bibitem{HK2} B. Helffer and A. Kachmar. From constant to non-degenerately vanishing magnetic fields in superconductivity. {\it Annales de l'Institut Henri Poincar\'e- Analyse Non-Lin\'eaire} {\bf 34}, 423-438 (2017).
     \bibitem{HK1} B. Helffer and A. Kachmar. The Ginzburg-Landau functional with vanishing magnetic field. {\it Arch. Rational Mech. Anal.} {\bf 218}, 55-122 (2015).
    \bibitem{HKRV} B. Helffer, Y. Kordyukov, N. Raymond, S. V\~{u}\,Ng\c{o}c. Magnetic wells in dimension three. {\it Analysis and PDE} {\bf 9} (7) 1575-1608 (2016).
    \bibitem{H.M.} B. Helffer and A. Mohamed. Semi classical analysis for the ground state energy of a Schr\"{o}dinger operator with magnetic wells. {\it J. Funct. Anal} {\bf 138} (1), 40-81 (1996).  

   \bibitem{Mon} R. Montgomery. Hearing the zero locus of a magnetic field. {\it Commun. Math. Phys.} {\bf 168} (3), 651-675 (1995). 
   \bibitem{P.K.} X. B. Pan and H. Kwek. Schr\"odinger operators with non-degenerately vanishing magnetic fields in bounded domains. {\it Trans. Am. Math. Soc.} {\bf 354} (10), 4201-4227 (2002).
   \bibitem{SS-b} E. Sandier and S. Serfaty. Vortices for the magnetic Ginzburg-Landau model. {\it Progress in Nonlinear Differential Equations and their
        Applications}, Vol. 70. Birkh\"{a}user, Basel (2007).
                \bibitem{SS-cmp} E. Sandier and S. Serfaty. From the Ginzburg-Landau model to vortex lattice problems. {\it Commun. Math. Phys.} {\bf 313} (3), 635-741 (2012).
        \end{thebibliography}
\end{document}